\documentclass[11pt,reqno,a4paper]{amsart}
\usepackage[a4paper, total={6in, 9in}]{geometry}
\usepackage[utf8]{inputenc}

\usepackage[usenames]{color}
\usepackage[abbrev,nobysame,alphabetic]{amsrefs}
\usepackage{ amssymb }
\usepackage{amsmath,pdfsync,verbatim,graphicx,epstopdf,enumerate}
\usepackage{mathtools} 
\mathtoolsset{showonlyrefs} 
\usepackage{thmtools}
\usepackage{thm-restate}
\usepackage[colorlinks=true]{hyperref}
\usepackage{cancel}
\usepackage{accents}
\usepackage[framemethod=tikz]{mdframed}
\usepackage{comment}
\usepackage{setspace}

\allowdisplaybreaks
\hypersetup{linkcolor=red,citecolor=green}
\numberwithin{equation}{section}
\pretolerance=4000
\setlength{\topmargin}{-.25in}
\setlength{\textheight}{8.8in}
\setlength{\textwidth}{6.8in}
\setlength{\headheight}{26pt}
\setlength{\oddsidemargin}{-0.25in}
\setlength{\evensidemargin}{-0.25in}


\numberwithin{equation}{section}

\newtheorem{theorem}{Theorem}[section]

\newtheorem{corollary}[theorem]{Corollary}
\newtheorem{lemma}[theorem]{Lemma}
\newtheorem{proposition}[theorem]{Proposition}
\theoremstyle{definition}

\newtheorem{remark}[theorem]{Remark}

\newcommand{\Om}{\Omega}

\newcommand{\D}{\Delta}
\newcommand{\e}{\varepsilon}

\newcommand{\p}{\partial}
\renewcommand{\D}{\mathrm{d}}

\newcommand{\Rb}{\mathbb{R}}

\newcommand{\Rnn}{\mathbb{R}^n}

\newcommand{\ol}[1]{\overline{#1}}
\newcommand{\norm}[1]{\left\lVert #1 \right\rVert}
\newcommand{\abs}[1]{\lvert #1 \rvert}

\newcommand{\dst}{\D s\,\D t}
\newcommand{\dstu}{\D\tau\, \D s\, \D t}

\newcommand{\baligned}{\begin{equation}\begin{aligned}}
	\newcommand{\ealigned}{\end{aligned}\end{equation}}
	
	\newcommand{\V}{\mathcal{V}}

	\newcommand{\janne}[1]{\begin{quotation}\textbf{\color{purple}Janne's comment:\ }{\color{purple}\textit{#1}}\end{quotation}}

	\title{An inverse problem for a nonlinear biharmonic operator}
	\author[Nurminen]{Janne Nurminen$^{\dagger}$}
	\address{$^{\dagger}$Computational Engineering, School of Engineering Sciences, Lappeenranta-Lahti University of Technology, Finland \& Department of Mathematics and Statistics, University of Jyv\"askyl\"a, Jyv\"askyl\"a, Finland}
	\email{janne.nurminen@lut.fi, janne.s.nurminen@jyu.fi}

	\author[Sahoo]{Suman Kumar Sahoo$^{\mathsection}$}
	\address{$^{\mathsection}$Department of Mathematics, Indian Institute of Technology, Bombay, India}
	\email{suman@math.iitb.ac.in,sumansahootifr@gmail.com}
	\begin{document}
\begin{abstract}
	An inverse problem for a nonlinear biharmonic operator is under consideration in the spirit of \cite{isakov_1993,JNS2023}. We prove that a general nonlinear term of the $Q= Q(x,u, \nabla u, \Delta u)$ associated to a nonlinear biharmonic operator can be recovered from the local Cauchy data set. The proof uses first order linearization method, Runge approximation, and uniqueness results for the linearized inverse problem. 
\end{abstract}

\subjclass[2010]{Primary 35R30, 31B20, 31B30, 35J40}
\subjclass[2020]{Primary 35R30, 31B20, 31B30, 35J40}
\keywords{Calder\'{o}n problem, Nonlinear biharmonic operator, Runge approximation}

\maketitle
\section{introduction and main results}
Let \(\Omega\) be a bounded domain with smooth boundary in \(\mathbb{R}^n\), where \(n \geq 2\), and let
\begin{align}\label{eq_reg_Q}
	Q = Q(x, u, \nabla u, \Delta u) = Q(x, z, X, q) \in C^k\left(\mathbb{R}, C^{4,\alpha}(\overline{\Omega}), C^{4,\alpha}(\overline{\Omega}; \mathbb{R}^n), C^{4,\alpha}(\overline{\Omega})\right), \quad k \geq 3. 
\end{align}

With this notation, consider the following semilinear fourth-order biharmonic operator:
\begin{equation}\label{main_equation}
	\begin{aligned}
		\mathcal{L}_{Q} u := \Delta^2 u + Q(x, u, \nabla u, \Delta u) &= 0 \quad \text{in } \Omega,
	\end{aligned}
\end{equation}
associated with the Navier boundary condition \((u, \Delta u) = (f_0, f_1)\) on \(\partial\Omega\). The boundary measurements corresponding to \eqref{main_equation} can be encoded in terms of the Navier-Neumann map as follows:
\[
\mathcal{N}_Q(f_0, f_1) := (\partial_{\nu} f_0, \partial_{\nu} f_1) \quad \text{for small boundary data } f_0, f_1.
\]
Alternatively, one could consider the boundary measurements in terms of Cauchy data sets:
\[
C_{Q,\mathcal{N}} = \left\{(u|_{\partial\Omega}, \partial_{\nu}u|_{\partial\Omega}, \Delta u|_{\partial\Omega}, \partial_{\nu} \Delta u|_{\partial\Omega}) : u \text{ solves } \mathcal{L}_{Q} u = 0 \text{ in } \Omega \right\}.
\]
The inverse problem we are interested in is to recover the nonlinear term \(Q\) from the knowledge of the Cauchy data set \(C_{Q}\). More precisely, we work with the local Cauchy data. Let \(\delta > 0\) and \(w \in C^{4,\alpha}(\overline{\Omega})\) be a fixed solution of \eqref{main_equation}. Then the local Cauchy data set is defined as:
\[
C_{Q}^{w, \delta} = \left\{(u|_{\partial\Omega}, \partial_{\nu}u|_{\partial\Omega}, \Delta u|_{\partial\Omega}, \partial_{\nu} \Delta u|_{\partial\Omega}) : u \text{ solves } \mathcal{L}_Q u = 0 \text{ in } \Omega, \text{ and } \|u - w\|_{C^{4,\alpha}(\overline{\Omega})} \leq \delta \right\}.
\]
Our main result states that if \(w\) is a common solution of \(\mathcal{L}_{Q_j} u = 0\) for \(j = 1, 2\) and the corresponding Cauchy data sets satisfy \(C_{Q_1}^{w, \delta} \subset C_{Q_2}^{w, \delta}\), then \(Q_1 = Q_2\) near \(w\). More precisely, we have:

\begin{theorem}\label{Th:main}
	Let \(n \geq 2\) and \(w\) solve \(\mathcal{L}_{Q_1} w = \mathcal{L}_{Q_2} w = 0\) in \(\Omega\), where \(Q_1\) and \(Q_2\) satisfy \eqref{eq_reg_Q}. Suppose for some \(\delta, C > 0\) we have $	C_{Q_1}^{w, \delta} \subseteq C_{Q_2}^{0, C}.$
	Then there exists \(\epsilon > 0\) such that
	\[
	Q_1(x, w(x) + \lambda, \nabla w(x) + \overline{\lambda}, \Delta w(x) + \lambda) = Q_2(x, w(x) + \lambda, \nabla w(x) + \overline{\lambda}, \Delta w(x) + \lambda),
	\]
	for \(x \in \overline{\Omega}\), \(|\lambda| \leq \epsilon\), and \(\overline{\lambda} = (\lambda, \ldots, \lambda) \in \mathbb{R}^n\).
\end{theorem}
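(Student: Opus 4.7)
The plan is to use first-order linearization at the common solution $w$ to convert the nonlinear inverse problem into a linear inverse problem for the fourth-order operator $\Delta^2 + a\Delta + b\cdot\nabla + c$, to invoke a uniqueness theorem for that linear problem, and then to use Runge approximation to propagate this information to curve-wise equality along the prescribed one-parameter family.

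By the implicit function theorem applied to $\mathcal{L}_{Q_1}$ in $C^{4,\alpha}(\overline{\Omega})$, I would first construct a smooth one-parameter family $\{u_\lambda\}$ of solutions of $\mathcal{L}_{Q_1}u_\lambda=0$ with $u_0=w$, parametrized by small $C^{4,\alpha}$ perturbations of the Navier data of $w$. The IFT is applicable because the linearization
\[
L_j \phi := \Delta^2 \phi + (\partial_q Q_j)\,\Delta \phi + (\partial_X Q_j) \cdot \nabla \phi + (\partial_z Q_j)\,\phi, \quad \text{coefficients at } (x, w, \nabla w, \Delta w),
\]
is elliptic of order four with a well-posed Navier problem (after shrinking $\delta$ if necessary). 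The hypothesis $C_{Q_1}^{w,\delta}\subseteq C_{Q_2}^{0,C}$ then provides, for each small $\lambda$, a companion $\tilde u_\lambda$ solving $\mathcal{L}_{Q_2}\tilde u_\lambda=0$ with the same Cauchy data as $u_\lambda$. I verify $\tilde u_0=w$ using Cauchy uniqueness for the linearized biharmonic problem, and obtain smooth dependence $\lambda\mapsto\tilde u_\lambda$ from the IFT applied to $\mathcal{L}_{Q_2}$. Differentiating at $\lambda=0$ produces $v:=\partial_\lambda u_\lambda|_{\lambda=0}$ and $\tilde v:=\partial_\lambda\tilde u_\lambda|_{\lambda=0}$ solving respectively $L_1 v=0$ and $L_2\tilde v=0$, with identical Cauchy data.

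Since the boundary perturbations sweep an open neighborhood, the Cauchy data sets of $L_1$ and $L_2$ coincide. Invoking the known uniqueness theorem for the linear biharmonic inverse problem (in the spirit of Krupchyk--Uhlmann and subsequent work on perturbed biharmonic operators), I deduce $L_1=L_2$, i.e.
\[
\partial_z Q_1 = \partial_z Q_2,\quad \partial_X Q_1 = \partial_X Q_2,\quad \partial_q Q_1 = \partial_q Q_2 \quad \text{at } (x, w(x), \nabla w(x), \Delta w(x)).
\]
To upgrade this derivative-level identity to equality along the curve, I would fix $x_0\in\overline{\Omega}$ and use Runge approximation for the common linearization $L:=L_1=L_2$ to construct $V\in C^{4,\alpha}(\overline{\Omega})$ with $LV=0$ and prescribed jet $V(x_0)=1$, $\nabla V(x_0)=\overline{1}$, $\Delta V(x_0)=1$ (the natural target being the biharmonic polynomial $1+\sum_i(x-x_0)_i+|x-x_0|^2/(2n)$, which realizes this jet exactly at $x_0$). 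A nonlinear perturbation via the IFT yields $u_\lambda^{x_0}\in C_{Q_1}^{w,\delta}$ with jet at $x_0$ equal to $(w(x_0)+\lambda,\nabla w(x_0)+\overline{\lambda},\Delta w(x_0)+\lambda)+O(\lambda^2)$. Applying the hypothesis, comparing $u_\lambda^{x_0}$ with its companion $\tilde u_\lambda^{x_0}$, and using the linearized identity together with a Cauchy-stability estimate, one first obtains agreement of $Q_1$ and $Q_2$ along the curve modulo $O(\lambda^2)$; iterating the linearization at higher orders (or refining the Runge approximant so that the jet is matched to all orders) removes the error and yields the exact curve-wise equality.

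The main obstacle is precisely this last step: promoting the derivative-level identity $L_1=L_2$ to exact curve-wise equality of $Q_1$ and $Q_2$. This requires either a higher-order analogue of the linearized uniqueness theorem, or a careful iterative bootstrap exploiting the structural vanishing of $Q_1-Q_2$ at each order along the direction $(1,\overline{1},1)$, and is naturally closed by combining the IFT, Cauchy-type unique continuation for fourth-order elliptic operators, and the hypothesis. A secondary technical issue is keeping $u_\lambda^{x_0}$ within the $\delta$-neighborhood of $w$ so that the hypothesis applies, which is controlled by choosing $\lambda$ small and bounding the $C^{4,\alpha}$ norm of the Runge approximant.
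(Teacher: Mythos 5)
You correctly set up the first linearization at $w$ and correctly flag the main obstacle: passing from the identity $L_1=L_2$ \emph{at $w$} to equality of $Q_1$ and $Q_2$ along the curve $(w+\lambda,\nabla w+\overline\lambda,\Delta w+\lambda)$. The fix you sketch---higher-order linearization or jet-matching at a point---is not what the paper does, and you leave it explicitly open; this is a genuine gap, not a routine technical step.

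There are two issues. First, the companion family $\tilde u_\lambda$ is not obtained by an IFT for $\mathcal{L}_{Q_2}$: the Cauchy data of $u_\lambda$ is overdetermined for the Navier problem, so the required $C^k$ dependence on the \emph{same} parameter is only obtained in the paper via a nontrivial construction (Lemma~\ref{lem_solution_map_for_both}) involving a projection onto the range $Z$ and a bounded inverse of the linearized operator on zero-Cauchy-data spaces. Second, the paper never iterates to higher order. Instead it runs the \emph{first} linearization at \emph{every} nearby solution $u_{1,v}$, using the isomorphism $DS_{Q_1,w}\colon \V_{A_1,X_1,V_1}\to \V_{A_v,X_v,V_v}$ of Lemma~\ref{lem_differomorphism} so that arbitrary small solutions of the $v$-linearization enter the integral identity at the point $u_{1,v}$; this determines the first-order coefficients at $(x,u_{j,v},\nabla u_{j,v},\Delta u_{j,v})$ for all small $v$ (Lemma~\ref{lm:determining_coeffi}). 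Those coefficient equalities imply that $\varphi_v:=u_{2,v}-u_{1,v}$ is independent of $v$ (Lemma~\ref{lem_v_independency}); since $w$ is a common solution, $\varphi_0=0$, hence $u_{2,v}=u_{1,v}$, and the relation $\Delta^2\varphi_v=Q_1(\cdot,u_{1,v},\ldots)-Q_2(\cdot,u_{2,v},\ldots)$ collapses to $Q_1=Q_2$ on the reachable set. Runge approximation (Section~\ref{sec:runge}) is then used only in the final step, as you anticipate, to show via an intermediate-value argument that the reachable set contains the asserted curve. Without the $v$-independence argument, your jet-matching plan does not obviously close: controlling the error to all orders would require controlling derivatives of $Q_1-Q_2$ in directions the one-dimensional family $(1,\overline 1,1)$ does not span.
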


As a corollary, we have the following result:

\begin{corollary}
	Let \(n \geq 2\) and \(\Omega \subset \mathbb{R}^n\) be a bounded domain with smooth boundary. Suppose \(\mathcal{N}_{Q_1}(f_0, f_1) = \mathcal{N}_{Q_2}(f_0, f_1)\) for small \(f_0\) and \(f_1\). Then
	\[
	Q_1(x, u, \nabla u, \Delta u) = Q_2(x, u, \nabla u, \Delta u).
	\]
\end{corollary}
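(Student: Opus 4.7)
The plan is to reduce the Corollary to Theorem \ref{Th:main} with the canonical choice $w = 0$. Implicit in the hypothesis that $\mathcal{N}_{Q_j}$ is well defined on small boundary data is that $u \equiv 0$ is a common solution of the two fourth-order equations, equivalently that $Q_j(x, 0, 0, 0) = 0$ for $j = 1, 2$. This in turn ensures that the Navier problem for $\mathcal{L}_{Q_j}$ is well posed in a neighbourhood of the zero boundary data by an implicit function theorem argument in H\"older spaces, using ellipticity of $\Delta^2$ together with the regularity \eqref{eq_reg_Q} of $Q$.

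The first concrete step is to convert the Navier--Neumann equality into an inclusion of local Cauchy data sets anchored at $w = 0$. Given $u$ solving $\mathcal{L}_{Q_1} u = 0$ with $\|u\|_{C^{4,\alpha}(\overline{\Omega})} \leq \delta$ for sufficiently small $\delta > 0$, the Navier data $(u|_{\partial\Omega}, \Delta u|_{\partial\Omega})$ are small in the appropriate H\"older norm, so I can solve the Navier problem for $\mathcal{L}_{Q_2}$ with these same data to produce $\tilde{u}$ with $\|\tilde{u}\|_{C^{4,\alpha}(\overline{\Omega})} \leq C$. The hypothesis $\mathcal{N}_{Q_1} = \mathcal{N}_{Q_2}$ then forces the outward normal derivatives of $u$ and $\tilde{u}$, as well as of $\Delta u$ and $\Delta \tilde{u}$, to agree on $\partial\Omega$, so $u$ and $\tilde{u}$ carry identical full Cauchy data. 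This establishes the inclusion $C_{Q_1}^{0, \delta} \subseteq C_{Q_2}^{0, C}$.

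With this inclusion in hand, I would invoke Theorem \ref{Th:main} with $w = 0$ to conclude that
\[
Q_1(x, \lambda, \overline{\lambda}, \lambda) = Q_2(x, \lambda, \overline{\lambda}, \lambda), \qquad x \in \overline{\Omega},\ |\lambda| \leq \epsilon,
\]
which is the identification asserted in the Corollary along the natural one-parameter slice $(z, X, q) = (\lambda, \overline{\lambda}, \lambda)$ of arguments. The only mildly delicate point I foresee is the forward well-posedness of the Navier problem for $\mathcal{L}_{Q_2}$ at small data; once $u = 0$ is recognised as a nondegenerate common solution, however, this is a standard implicit function theorem argument and poses no substantive obstacle, so essentially all of the mathematical content is packaged inside Theorem \ref{Th:main}.
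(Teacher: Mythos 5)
Your proposal matches the paper's intended deduction: the Corollary is stated as an immediate consequence of Theorem \ref{Th:main}, obtained exactly as you describe by noting that well-posedness of the Navier problem for small data presupposes that $w\equiv 0$ is a common solution (i.e. $Q_j(x,0,0,0)=0$), converting equality of the Navier--Neumann maps into the inclusion $C_{Q_1}^{0,\delta}\subseteq C_{Q_2}^{0,C}$, and then invoking Theorem \ref{Th:main} with $w=0$. The paper supplies no further details for this step, so your reconstruction is essentially the same argument, with the conclusion understood in the slice/reachable-set sense provided by the main theorem.
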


If two nonlinearities \(Q_1\) and \(Q_2\) do not have a common solution, then there is a gauge invariance for the inverse problem; see \cite{JNS2023}. If \(\phi \in C^{4,\alpha}(\overline{\Omega})\) satisfies \(\phi = 0\) on \(\partial\Omega\) and \(\partial^k_{\nu} \phi = 0\) on \(\partial\Omega\) for \(k = 1, 2, 3\), then
\[
\mathcal{L}_Q u = 0 \quad \text{in } \Omega \quad \text{if and only if} \quad \mathcal{L}_{T_{\phi}} v = 0 \quad \text{in } \Omega,
\]
where \(v = u - \phi\), and $T_{\phi} Q(x, u, \nabla u, \Delta u) = \Delta^2 \phi - Q(x, u + \phi, \nabla (u + \phi), \Delta (u + \phi)).$

Our next result shows that if one knows the Cauchy data for a nonlinearity \(Q\) and for solutions close to a given solution \(w\), then one can recover \(Q\) near points \((x, w(x))\) up to the gauge mentioned above.
\begin{theorem}\label{th_main_upto_gauge}
	Let \(w\) solve the equation \(\mathcal{L}_{Q_1} w = 0\) in \(\Omega\). Suppose for some \(\delta, C > 0\) we have
	$	C_{Q_1}^{w, \delta} \subseteq C_{Q_2}^{0, C}$, where \(Q_i\) satisfy \eqref{eq_reg_Q} for \(i = 1, 2\). 
	Then there exists \(\epsilon > 0\) such that
	\[
	Q_1(x, w(x) + \lambda, \nabla w(x) + \overline{\lambda}, \Delta w(x) + \lambda) = T_{\phi}Q_2(x, w(x) + \lambda, \nabla w(x) + \overline{\lambda}, \Delta w(x) + \lambda),
	\]
	for \(x \in \overline{\Omega}\), \(|\lambda| \leq \epsilon\), and \(\overline{\lambda} = (\lambda, \ldots, \lambda) \in \mathbb{R}^n\).
\end{theorem}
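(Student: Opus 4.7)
The plan is to use gauge invariance to reduce Theorem \ref{th_main_upto_gauge} directly to Theorem \ref{Th:main}. As the first step, I would construct the gauge function $\phi$. Since $w$ solves $\mathcal{L}_{Q_1}w=0$, its Cauchy datum lies in $C_{Q_1}^{w,\delta}$, and hence by hypothesis in $C_{Q_2}^{0,C}$; thus there exists $\tilde w\in C^{4,\alpha}(\overline{\Omega})$ with $\mathcal{L}_{Q_2}\tilde w=0$, $\|\tilde w\|_{C^{4,\alpha}(\overline{\Omega})}\leq C$, and the same Cauchy data as $w$. Set $\phi:=\tilde w-w$; then $\phi|_{\partial\Omega}=\partial_\nu\phi|_{\partial\Omega}=\Delta\phi|_{\partial\Omega}=\partial_\nu\Delta\phi|_{\partial\Omega}=0$. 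Decomposing the Laplacian near the boundary as $\Delta=\partial_\nu^2+H\partial_\nu+\Delta_{\mathrm{tan}}$ with $H$ the mean curvature, the vanishing of $\phi$ and $\partial_\nu\phi$ together with $\Delta\phi|_{\partial\Omega}=0$ forces $\partial_\nu^2\phi|_{\partial\Omega}=0$, and an analogous computation starting from $\partial_\nu\Delta\phi|_{\partial\Omega}=0$ yields $\partial_\nu^3\phi|_{\partial\Omega}=0$. Hence $\phi$ satisfies all the vanishing conditions required by the gauge $T_\phi$ recalled before the theorem.

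Next, by the stated gauge invariance, $\tilde w=w+\phi$ solving $\mathcal{L}_{Q_2}\tilde w=0$ is equivalent to $w$ solving $\mathcal{L}_{T_\phi Q_2}w=0$. Thus $w$ is a common solution of $\mathcal{L}_{Q_1}$ and $\mathcal{L}_{T_\phi Q_2}$, supplying the first hypothesis of Theorem \ref{Th:main} with $Q_2$ replaced by $T_\phi Q_2$.

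For the inclusion hypothesis, I claim $C_{Q_1}^{w,\delta}\subseteq C_{T_\phi Q_2}^{0,C'}$ for a suitable $C'>0$. Given $u\in C_{Q_1}^{w,\delta}$, the hypothesis of the theorem yields $\tilde u$ with $\mathcal{L}_{Q_2}\tilde u=0$, $\|\tilde u\|_{C^{4,\alpha}(\overline{\Omega})}\leq C$, and Cauchy data matching that of $u$. By gauge invariance, $v:=\tilde u-\phi$ solves $\mathcal{L}_{T_\phi Q_2}v=0$; since $\phi,\partial_\nu\phi,\Delta\phi,\partial_\nu\Delta\phi$ all vanish on $\partial\Omega$, the Cauchy data of $v$ coincides with that of $\tilde u$, hence with that of $u$. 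Moreover $\|v\|_{C^{4,\alpha}(\overline{\Omega})}\leq\|\tilde u\|_{C^{4,\alpha}(\overline{\Omega})}+\|\phi\|_{C^{4,\alpha}(\overline{\Omega})}\leq C'$ for $C':=C+\|\phi\|_{C^{4,\alpha}(\overline{\Omega})}$, so $v\in C_{T_\phi Q_2}^{0,C'}$ and the Cauchy data of $u$ sits inside $C_{T_\phi Q_2}^{0,C'}$ as required.

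Finally, applying Theorem \ref{Th:main} to the pair $(Q_1,T_\phi Q_2)$ with common solution $w$ and inclusion $C_{Q_1}^{w,\delta}\subseteq C_{T_\phi Q_2}^{0,C'}$ yields the identity $Q_1(x,w(x)+\lambda,\nabla w(x)+\overline{\lambda},\Delta w(x)+\lambda)=T_\phi Q_2(x,w(x)+\lambda,\nabla w(x)+\overline{\lambda},\Delta w(x)+\lambda)$ for $x\in\overline{\Omega}$ and $|\lambda|\leq\epsilon$, which is the desired conclusion. The main conceptual point is the first step: the hypothesis produces only \emph{some} preimage $\tilde w$ of the Cauchy data of $w$ under $\mathcal{L}_{Q_2}$, so $\phi$ is not canonically determined; however, any representative suffices for the reduction, because the rest of the argument uses only the vanishing of $\phi$ and its first three normal derivatives on $\partial\Omega$ together with the gauge transformation rule.
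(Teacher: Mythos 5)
Your reduction is the reverse of the logical direction the paper actually uses, and this is where the genuine gap lies. The paper proves Theorem \ref{th_main_upto_gauge} directly — via the solution operators $S_{Q_1,w}$ and $T_{Q_2}$ of Sections \ref{sec:smooth_solution_map} and \ref{sec:controlling_Cauchy_data}, the Cauchy data estimates, the determination of the linearized coefficients (Lemma \ref{lm:determining_coeffi}), the $v$-independence of $\phi_v=u_{2,v}-u_{1,v}$ (Lemma \ref{lem_v_independency}), and then a Runge approximation plus intermediate-value argument to realize every value $(w(\tilde x)+\lambda,\nabla w(\tilde x)+\overline{\lambda},\Delta w(\tilde x)+\lambda)$ by some $u_{1,v}$ — and Theorem \ref{Th:main} is then obtained as the special case in which $w$ solves both equations, so that (by the uniqueness of Lemma \ref{lemma_cauchy_data_est}) the gauge $\phi$ vanishes. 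Since the paper offers no independent proof of Theorem \ref{Th:main}, invoking it to prove Theorem \ref{th_main_upto_gauge} is circular: your argument contains none of the substantive content (linearization, recovery of $A,X,V$ via the uniqueness results for the linearized problem, the second solution operator needed to parametrize both families by the same linearized solutions, and the Runge/covering argument that produces the whole $\lambda$-interval rather than just the graph of $w$). What you have written is really a correct and useful observation that the two theorems are equivalent up to the gauge — the gauge bookkeeping itself (vanishing of $\phi,\partial_\nu\phi,\partial_\nu^2\phi,\partial_\nu^3\phi$ on $\partial\Omega$, transfer of the Cauchy data inclusion to the pair $(Q_1,T_\phi Q_2)$) is essentially fine — but it is not a proof of either statement.

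There is also a secondary technical point you pass over: Theorem \ref{Th:main} requires both nonlinearities to satisfy the regularity hypothesis \eqref{eq_reg_Q}, and $T_\phi Q_2(x,z,X,q)=\Delta^2\phi(x)+Q_2(x,z+\phi(x),X+\nabla\phi(x),q+\Delta\phi(x))$ has, a priori, only $C^{\alpha}$ dependence on $x$ when $\phi\in C^{4,\alpha}(\overline{\Omega})$, because of the term $\Delta^2\phi$ and the composition with $\Delta\phi$. To place $T_\phi Q_2$ in the admissible class you would need an elliptic bootstrap showing $w,\tilde w\in C^{8,\alpha}(\overline{\Omega})$ (hence $\phi\in C^{8,\alpha}(\overline{\Omega})$), in the spirit of the regularity remark made inside the proof of Lemma \ref{lem_solution_map_for_both}; as written, the hypothesis of the theorem you invoke is not verified for the gauged nonlinearity.
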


\subsection{State of the Art}

The linearization of nonlinear partial differential equations (PDEs) is a well-established and powerful method for simplifying and solving complex problems. This method involves linearizing a nonlinear PDE near a given solution, using the inverse/implicit function theorem, making it easier to analyze and solve locally. We employ similar ideas to solve our inverse problem. 

The inverse problem for a linear second-order elliptic equation was initially posed by Calder\'on for the conductivity equation in 1980 \cite{Calderon_1980}, where he established results for the linearized equation. Since then, numerous researchers have explored the linearized Calder\'on problem in various settings; see \cite{Calderon_1980}. In 1987, Sylvester and Uhlmann \cite{sylvester1987} solved the nonlinear inverse problem for a linear second-order elliptic PDE. There is a substantial body of literature available on inverse problems related to second-order linear elliptic PDEs, and we refer to the survey \cite{uhlmann2009} for further details.

Inverse problems involving nonlinear PDEs began with Isakov's work \cite{isakov_1993}. In \cite{isakov_1993}, he demonstrated that a nonlinear term $
a(x,z)$  in a nonlinear Schr\"odinger equation $\Delta u +a(x,u)=0$ can be recovered from the Dirichlet-to-Neumann map $\Lambda_a=\p_{\nu} u|_{\p\Om}$, using the first-order linearization technique under suitable assumptions on 
$a(x,z)$. Similar results can be found in \cite{isakov_sylvester_1994, isakov_1993}.
These works all relied on the following assumption:   either $a(x,0)=0$ or $ \p_u a(x,u)\le 0$. Such assumptions ensured well-posedness and the maximum principle, which were crucial in closing the argument. Subsequent studies, particularly \cite{isakov_nachman_1995,sun2010}, weakened these requirements. Notably, their results establish that the nonlinear term 
can be recovered in the reachable set
\begin{align}
	E_a=\{ (x,z): x\in \ol{\Om} \,\, \mbox{and} \,\,z=u(x)\,\, \mbox{for some solution of $ \Delta u +a(x,u)=0$} \},  
\end{align}
and  in general, the set $E_a$ cannot be all of $\ol{\Om}\times \Rb$; see the counterexample in \cite{isakov_1993}. Recently, \cite{JNS2023} eliminated the sign condition on 
$ a(x,u)$ and solved an inverse problem for the equation  $\Delta u +a(x, u)=0$, for a general nonlinear term
$a(x,u)$. For more results related to inverse problems for nonlinear equations, we refer to the survey \cite{sun2005}.

The higher-order linearization method was initially introduced by Kurylev, Lassas, and Uhlmann \cite{KLu-linear_hyperbolic} to tackle inverse problems for nonlinear wave equations. While the first-order linearization relies on techniques from linear theory, the higher-order approach provides a powerful framework for addressing nonlinear inverse problems that remain unresolved in the linear case
see \cite{Kian_Kru_Uhlmann, Kruchyk_Uhl, Lai_Zhou}, and the references therein.  Subsequent works \cite{LLLS_elliptic_nonlinear, FO_semilinear_elliptic, LLST_fractional_power} extended this method to nonlinear elliptic equations. Additionally, we highlight the works \cite{Catalin_Ghosh_Nakamura_aniso_poros_media, Catalin_Ghosh_Uhlmann_poros_media}, which explore inverse problems for porous media equations where nonlinearity appears in the principal part of the equation rather than lower-order terms. For further exploration of similar approaches, we refer to the  works \cite{Kian_JFA,Nuerminen_ms_2023,Nurminen_ms_2024,CLL_minimal_jde,KLL_Forum_sigma} and the references cited in these articles.

The study of inverse problems for higher-order elliptic equations is relatively limited compared to second-order elliptic equations. This area of research began with the works of Krupchyk, Lassas, and Uhlmann \cite{KLU_biharmonic,KLU_poly}. For more recent contributions, see \cite{BKS_poly,SS_linearizer_polyharmonic}. In a recent study \cite{BKSU_nonlinear_CPDE}, the authors investigated an inverse problem for a nonlinear biharmonic operator involving a third-order anisotropic perturbation. This is another instance where nonlinearity is used as a tool to solve inverse problems that remain unsolved for their linear counterparts. We refer to the works  \cite{GK_poly,BG_2019,BG_2022,Pranav_Kumar_local_data,AJS_biharmonic_partial} for more results, and discussion on inverse problems related to higher-order operators. To the best of our knowledge, there are no existing studies addressing inverse problems for nonlinear biharmonic operators where a generic nonlinear coefficient depends on 
$u,\nabla u,\Delta u$. We address this problem in this article using first order linearization.

Let us briefly describe the method used (for a more complete description we refer to \cite{JNS2023}). The key idea is to linearize the problem and construct solutions near a fixed solutions to the nonlinear problem. This is done in two parts. First we construct solutions that depend continuously on the solutions to the linearized equation. These will have a specific form that in applying estimates using Cauchy data. When we have two nonlinear equations $\Delta^2 u + Q_i(x,u,\nabla u, \Delta u)=0$, $i=1,2$, this first solution operator gives that the solutions will depend continuously on the corresponding linearized equations. But we would need the solutions to depend on the same linearized solutions, that is, for solutions on the linearized equation for $\Delta^2 u + Q_1(x,u,\nabla u, \Delta u)=0$. This is made possible by the second solution operator. Then we combine these solution operators with the mapping properties of the first solution operator and results for the linearized inverse problem to conclude the proof of our main results.

The rest of the article is organized as follows.
In Section \ref{sec:smooth_solution_map} we construct the first solutions operator. Section \ref{sec:estimating_Cauchy_data} is dedicated to Cauchy data estimates and in Section \ref{sec:controlling_Cauchy_data} we use these results to construct the second solution operator. In Section \ref{sec:first_lin} we show that the coefficients of the linearized equation agree for $i=1,2$. Section \ref{sec:proof_of_main} gives the proof of our main results and Section \ref{sec:runge} gives a Runge approximation result used in the proof of the main results. Finally, in the Appendix \ref{sec:appendix} we show some technical calculations used in Section \ref{sec:smooth_solution_map}.

\section*{acknowledgment}
JN and SKS are thankful to Mikko Salo for several fruitful discussions on the  topics related to the article. JN was supported by the Finnish Centre of Excellence in Inverse Modelling and Imaging (Academy of Finland Grant 284715), the Research Council of Finland (Flagship of Advanced Mathematics for Sensing Imaging and Modelling grant 359208) and by the Emil Aaltonen Foundation.


\section{A smooth solution map}\label{sec:smooth_solution_map}
The goal of this section is to provide a good solution map for implementing the linearization method. We address  the situation when the linearized problem is well-posed in some suitable function space. The  latter situation when the linearized problem is not well-posed can be dealt with using Fredholm alternatives, using the arguments used in \cite{JNS2023}.  Because for most of the Dirichlet data (up-to a finite dimension space) it is well-posed by the Fredholm theory.

We use the notation $ A \lesssim B \implies A \le C B$ for some constant $C$, throughout the rest of the article, and the value of that constant might change from line to line. We start with recalling the following regularity result from \cite[Theorem 2.19]{Gazzola_book}.
\begin{lemma}\label{lem_regularity}
	Let $F\in C^{\alpha} (\ol{\Om})$ and $(f_0,f_1)\in C^{4,\alpha}(\p\Om) \times C^{2,\alpha}(\p\Om)$. Then the following boundary value problem 
	\begin{equation}\label{eq:boundary_value_problem}
		\begin{aligned}
			( \Delta^2 + A \Delta + X\cdot \nabla + V)u &=F\quad \qquad \,\,\mbox{in $\Om$},\\
			(u, \Delta u)&=(f_0,f_1) \quad \mbox{on $\p\Om$},
		\end{aligned}
	\end{equation}
	admits a unique solution $u_{F,f_0,f_1}=G_{A,X,V}(F,f_0,f_1)$ and satisfies
	\begin{align}
		\norm{u_{F,f_0,f_1}}_{C^{4,\alpha} (\ol{\Om})} \lesssim \left( \norm{F}_{C^{\alpha} (\ol{\Om})} + \norm{f_0}_{C^{4,\alpha}(\p\Om)}+ \norm{f_1}_{C^{2,\alpha}(\p\Om)} \right),
	\end{align}
	where the implicit constant is independent of $F,f_0$ and $f_1$.
	
\end{lemma}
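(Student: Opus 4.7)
The plan is to reduce the fourth-order Navier problem to a coupled system of two second-order Dirichlet problems and then invoke standard Schauder theory together with the Fredholm alternative; this is the classical route taken in Gazzola's book. The key observation is that the Navier boundary conditions $(u,\Delta u)=(f_0,f_1)$ are perfectly adapted to the substitution $v := \Delta u$, since the fourth-order BVP decouples into
\[
\Delta u = v \text{ in } \Omega,\; u|_{\partial\Omega}=f_0, \qquad \Delta v = F - Av - X\cdot\nabla u - Vu \text{ in } \Omega,\; v|_{\partial\Omega}=f_1.
\]

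First I would homogenize the boundary data. Solving successively $\Delta \tilde w = 0$ in $\Omega$ with $\tilde w|_{\partial\Omega}=f_1$, and then $\Delta w = \tilde w$ in $\Omega$ with $w|_{\partial\Omega}=f_0$, produces $w\in C^{4,\alpha}(\bar\Omega)$ with $(w,\Delta w)=(f_0,f_1)$ on $\partial\Omega$ and
\[
\|w\|_{C^{4,\alpha}(\bar\Omega)} \lesssim \|f_0\|_{C^{4,\alpha}(\partial\Omega)} + \|f_1\|_{C^{2,\alpha}(\partial\Omega)}
\]
by classical Schauder theory for the Laplacian (this explains the asymmetric regularity assumptions on $f_0$ and $f_1$). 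Writing $u = w + \tilde u$, the problem reduces to
\[
(\Delta^2 + A\Delta + X\cdot\nabla + V)\tilde u = \tilde F \text{ in } \Omega, \qquad (\tilde u, \Delta \tilde u)=(0,0) \text{ on } \partial\Omega,
\]
for some $\tilde F \in C^{\alpha}(\bar\Omega)$ whose norm is bounded by the data.

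Next I would recast the homogeneous problem as an operator equation for $v:=\Delta \tilde u$. Let $G_0\colon C^{\alpha}(\bar\Omega)\to C^{2,\alpha}(\bar\Omega)$ denote the solution operator of the Dirichlet Laplacian, so that $\tilde u = G_0 v$. The equation becomes
\[
\Delta v = \tilde F - A v - X\cdot\nabla(G_0 v) - V\,G_0 v, \quad v|_{\partial\Omega}=0,
\]
equivalently $(I + K)v = G_0 \tilde F$ after inverting the Laplacian, where
\[
Kv := G_0\!\left(Av + X\cdot\nabla(G_0 v) + V G_0 v\right)
\]
maps $C^{2,\alpha}(\bar\Omega)$ into itself compactly: each application of $G_0$ gains two derivatives and the embedding $C^{2,\alpha}\hookrightarrow C^{1,\alpha}$ is compact by Arzel\`a--Ascoli. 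The Fredholm alternative then yields existence whenever the homogeneous problem has only the trivial solution, and Schauder estimates propagate the bound to $\|v\|_{C^{2,\alpha}}\lesssim \|\tilde F\|_{C^{\alpha}}$, hence $\|\tilde u\|_{C^{4,\alpha}}\lesssim \|\tilde F\|_{C^{\alpha}}$.

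The main obstacle is injectivity: for general $A,X,V$ the operator $\Delta^2+A\Delta+X\cdot\nabla+V$ with Navier data may possess a nontrivial kernel, so uniqueness has to either be assumed (as is implicit in the statement) or recovered via the Fredholm-alternative route hinted at in the opening of this section, where one works modulo a finite-dimensional kernel as in \cite{JNS2023}. Granted this uniqueness hypothesis, the Fredholm alternative closes the argument and delivers the stated Schauder estimate, with implicit constant depending only on $\Omega$ and the $C^{\alpha}$-norms of $A$, $X$, $V$.
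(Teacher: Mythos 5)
Your argument is essentially correct, but it is a genuinely different route from the paper: the paper does not prove this lemma at all, it simply quotes it as a known Schauder-type result, namely Theorem 2.19 of the Gazzola--Grunau--Sweers monograph cited as \cite{Gazzola_book}. Your proposal instead reconstructs that result by exploiting the decoupling of the Navier conditions: lifting the boundary data through two successive Dirichlet problems for the Laplacian (which also explains cleanly why $f_0$ needs $C^{4,\alpha}$ and $f_1$ only $C^{2,\alpha}$), writing $v=\Delta\tilde u$, $\tilde u=G_0v$, and treating the lower-order terms as a compact perturbation $(I+K)v=G_0\tilde F$ on $C^{2,\alpha}(\ol\Om)$, so that the Fredholm alternative plus Schauder estimates give existence and the stated bound. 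What your approach buys is a self-contained and transparent proof adapted to the Navier problem; what the paper's approach buys is brevity, at the cost of hiding the hypotheses of the cited theorem. Your caveat about injectivity is well taken and is not a defect of your argument: for general $A,X,V$ the operator with Navier data can have a nontrivial kernel, and the lemma as literally stated is silent on this; the paper implicitly assumes well-posedness of the linearized problem (this is announced at the beginning of Section \ref{sec:smooth_solution_map}, where the non-well-posed case is deferred to a Fredholm-alternative argument as in \cite{JNS2023}), and the same kind of spectral assumption appears explicitly later when Theorem 2.20 of \cite{Gazzola_book} is invoked. So, modulo making that uniqueness hypothesis explicit --- exactly as you do --- your proof is complete and delivers the estimate with a constant depending only on $\Om$ and the coefficient norms.
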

In the next Lemma  we study  the linearized equation using a fixed point argument. 
\begin{lemma}\label{lemma_fixed_point}
	Let $Q \in C^k(\Rb,C^{4,\alpha}(\ol{\Omega}), C^{4,\alpha}(\ol{\Omega};\Rnn),C^{4,\alpha}(\ol{\Omega}))$ and $w\in C^{k,\alpha}(\ol{\Om})$ be a solution of $\Delta^2 w + Q(x,w,\nabla w, \Delta w) = 0$ in $\Om.$
	Let $G_Q(F,f_0,f_1)$ be the solution operator of
	\begin{equation*}
		\begin{cases}
			\Delta^2 u + \partial_{q} Q(x,w,\nabla w,\Delta w) \Delta u + \nabla_p Q(x,w,\nabla w,\Delta w)\cdot\nabla u + \p_uQ(x,w,\nabla w,\Delta w)u = F& \quad\text{in }\Om \\
			(u, \Delta u)= (f_0,f_1)&\quad\text{on }\p\Om.
		\end{cases}
	\end{equation*}
	Define $R_v(r) = R(v+r)$, where $R:C^{4,\alpha}(\ol{\Om})\to C^{\alpha}(\ol{\Om})$ is given by
	\begin{equation}\label{def_capital_R}
		\begin{aligned}
			R(h) &:= \int_0^1 [\p_u Q(x,w+th,\nabla(w+th),\Delta(w+th)) - \p_u Q(x,w,\nabla w,\Delta w)]h \,\D t \\
			& \quad+ \int_0^1 [\nabla_p Q(x,w+th,\nabla(w+th),\Delta(w+th)) - \nabla_p Q(x,w,\nabla w,\Delta w)]\cdot \nabla h \,\D t\\
			&\quad+ \int_0^1 [\p_q Q(x,w+th,\nabla(w+th),\Delta(w+th)) - \p_q Q(x,w,\nabla w,\Delta w)] \Delta h \,\D t.
		\end{aligned}
	\end{equation}
	Let  $v\in C^{4,\alpha}(\ol{\Om})$ be fixed. Next we  define $T_v: C^{4,\alpha}(\ol{\Om})\to C^{4,\alpha}(\ol{\Om})$ by $T_v(r) = -G_Q(R_v(r), 0,0)$. Under the above assumptions, there exists $\delta>0$ such that the map $T_v\vert_{B_{\delta}}\colon B_{\delta}\to B_{\delta}$ is a contraction, where $B_{\delta}$ is given by $B_{\delta}=\{ u\in C^{4,\alpha}(\ol{\Om}): \norm{u}_{C^{4,\alpha}(\ol{\Om})}\le \delta \} $. Furthermore, \begin{equation}\label{eq:fixed_point_quad_estimate}
		\norm{T_v(h)}_{C^{4,\alpha}(\ol{\Om})} \lesssim \norm{v+h}_{C^{4,\alpha}(\ol{\Om})}^2, \quad h\in B_{\delta}.
	\end{equation}
	Consequently, there exists a unique $r \in B_{\delta}$ solving the fixed point equation $r = T_v(r)$. The function $r$ is also the unique solution of 
	\begin{equation}\label{eq:fixed_point_equation}
		\begin{aligned}
			P_Qr&= -R(v+r) \quad\text{in }\Om, \quad \mbox{and} \quad 
			(r,\Delta r)&=0 \quad\text{on }\p\Om,
		\end{aligned}
	\end{equation}
	where $P_Q r= \Delta^2 r + \partial_{q} Q(x,w,\nabla w,\Delta w) \Delta r + \nabla_p Q(x,w,\nabla w,\Delta w)\cdot\nabla r + \p_uQ(x,w,\nabla w,\Delta w)r.$
\end{lemma}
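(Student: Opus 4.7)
The plan is to obtain $r$ by the Banach fixed-point theorem applied to $T_v$ on a sufficiently small ball $B_\delta$, the radius being chosen (with an implicit smallness assumption on $\|v\|_{C^{4,\alpha}(\ol{\Om})}$) once one has shown that the nonlinear remainder $R$ vanishes quadratically at the origin. Once such a bound is in place, the smoothing estimate of Lemma \ref{lem_regularity} immediately yields both the quadratic estimate \eqref{eq:fixed_point_quad_estimate} and a Lipschitz bound on $T_v$ with small constant. The equivalence with the boundary-value problem \eqref{eq:fixed_point_equation} is then just unfolding the definition of $G_Q$: the identity $r=-G_Q(R(v+r),0,0)$ means exactly $P_Q r = -R(v+r)$ in $\Om$ with $(r,\Delta r)=0$ on $\p\Om$.

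First I would establish the quadratic bound $\|R(h)\|_{C^\alpha(\ol{\Om})}\lesssim\|h\|_{C^{4,\alpha}(\ol{\Om})}^2$, uniform in $\|h\|_{C^{4,\alpha}}$ on bounded sets. For each of the three summands in \eqref{def_capital_R}, another application of the fundamental theorem of calculus in the $(z,X,q)$ variables gives, schematically,
\[
\p_u Q(x,w+th,\nabla(w+th),\Delta(w+th))-\p_u Q(x,w,\nabla w,\Delta w)=\int_0^1 \bigl(\p_u^2 Q\cdot th+\nabla_p\p_u Q\cdot t\nabla h+\p_q\p_u Q\cdot t\Delta h\bigr)\,\D s,
\]
with the second derivatives evaluated at the interpolant $(x,w+sth,\nabla(w+sth),\Delta(w+sth))$. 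Since $k\geq 3$, the regularity assumption \eqref{eq_reg_Q} guarantees that every second $(z,X,q)$-derivative of $Q$ is $C^{4,\alpha}(\ol{\Om})$ in $x$ and continuous in $(z,X,q)$, and composition with $w+sth\in C^{4,\alpha}(\ol{\Om})$ preserves $C^\alpha$ regularity with norms controlled by $\|w\|_{C^{4,\alpha}}$ and an a priori bound on $\|h\|_{C^{4,\alpha}}$; hence the bracket above is $O(t\|h\|_{C^{4,\alpha}})$ in $C^\alpha(\ol{\Om})$. Multiplying by the outer factor $h$ (respectively $\nabla h$ or $\Delta h$ for the other summands), each bounded in $C^\alpha$ by $\|h\|_{C^{4,\alpha}}$, and integrating in $t$ yields the claimed quadratic bound, and then \eqref{eq:fixed_point_quad_estimate} follows from Lemma \ref{lem_regularity} applied with zero boundary data.

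Repeating the Taylor argument on $R(v+h_1)-R(v+h_2)$ along the segment joining $v+h_2$ to $v+h_1$, and using that the formal first derivative of $R$ at $0$ vanishes (a direct consequence of the quadratic bound), I expect
\[
\|R(v+h_1)-R(v+h_2)\|_{C^\alpha(\ol{\Om})}\lesssim \bigl(\|v\|_{C^{4,\alpha}}+\|h_1\|_{C^{4,\alpha}}+\|h_2\|_{C^{4,\alpha}}\bigr)\|h_1-h_2\|_{C^{4,\alpha}},
\]
so that Lemma \ref{lem_regularity} gives $\|T_v(h_1)-T_v(h_2)\|_{C^{4,\alpha}(\ol{\Om})}\le C(\|v\|_{C^{4,\alpha}}+2\delta)\|h_1-h_2\|_{C^{4,\alpha}}$ for $h_1,h_2\in B_\delta$. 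Choosing $\delta$ (and implicitly $\|v\|_{C^{4,\alpha}}$) small enough that $C(\|v\|_{C^{4,\alpha}}+\delta)^2\le\delta$ and $C(\|v\|_{C^{4,\alpha}}+2\delta)\le\tfrac12$ makes $T_v$ both a self-map of $B_\delta$ and a $\tfrac12$-contraction; Banach's theorem then delivers a unique fixed point $r\in B_\delta$, which by the discussion above is the unique solution of \eqref{eq:fixed_point_equation} in $B_\delta$. The main obstacle is the bookkeeping for the quadratic bound on $R$: one has to verify carefully that the nested Taylor remainders really assemble into a bound that is quadratic in $\|h\|_{C^{4,\alpha}}$ and lands in $C^\alpha(\ol{\Om})$, using the precise meaning of the $C^k$--$C^{4,\alpha}$ assumption in \eqref{eq_reg_Q} together with the stability of H\"older composition; this is exactly the technical calculation flagged for Appendix \ref{sec:appendix}, after which the remainder is a textbook contraction-mapping argument built on Lemma \ref{lem_regularity}.
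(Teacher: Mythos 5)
Your proposal is correct and follows essentially the same route as the paper's proof: the self-map property and the quadratic estimate \eqref{eq:fixed_point_quad_estimate} via Lemma \ref{lem_regularity} combined with a fundamental-theorem-of-calculus expansion of $R$ (controlled as in Lemma \ref{Q_deriv_estimate}), and the contraction via a Lipschitz bound on $R_v$ with constant of order $\norm{v}_{C^{4,\alpha}(\ol{\Om})}+\delta$, which is exactly the bookkeeping the paper carries out explicitly in Lemma \ref{lem_R_v} and which you defer but correctly describe. Your explicit remark that the smallness must also constrain $\norm{v}_{C^{4,\alpha}(\ol{\Om})}$ is consistent with the paper, whose proof likewise takes $v\in B_{\delta}$.
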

\begin{proof}
	We start with proving the fact that $T_v$ is a mapping from $B_{\delta}$ into itself for $\delta>0$ small enough. Let $v,r\in B_{\delta} $ for $\delta\le 1$. By Lemma \ref{lem_regularity} we obtain
	\begin{align}
		\norm{T_{v}(r)}_{C^{4,\alpha}(\ol{\Om})}= \norm{G_{Q}(R_v(r),0,0}_{C^{4,\alpha}(\ol{\Om})}&\le C \norm{R(v+r)}_{C^{\alpha}(\ol{\Om})},
	\end{align}
	and by the fundamental theorem of calculus we obtain
	\begin{align*}
		&\norm{R(v+r)}_{C^{\alpha}(\ol{\Om})} \\
		= & \Bigg\lVert\int_0^1 [\p_u Q(x,w+t(v+r),\nabla(w+t(v+r)),\Delta(w+t(v+r))) - \p_u Q(x,w,\nabla w,\Delta w)](v+r) \,\D t \\
		&+ \int_0^1 [\nabla_p Q(x,w+t(v+r),\nabla(w+t(v+r)),\Delta(w+t(v+r))) - \nabla_p Q(x,w,\nabla w,\Delta w)]\cdot \nabla (v+r) \,\D t\\\notag
		&+ \int_0^1 [\p_q Q(x,w+t(v+r),\nabla(w+t(v+r)),\Delta(w+t(v+r))) - \p_q Q(x,w,\nabla w,\Delta w)] \Delta (v+r) \,\D t\Bigg\rVert_{C^{\alpha}(\ol{\Om})}\\
		&\le \Bigg\lVert\int_0^1\int_{0}^1 \frac{d}{d s}\p_u Q(x,w+st(v+r),\nabla(w+st(v+r)),\Delta(w+st(v+r))) (v+r) \dst \Bigg\rVert_{C^{\alpha}(\ol{\Om})} \\&\quad+\Bigg\lVert\int_0^1\int_{0}^1 \frac{d}{d s}\nabla_p Q(x,w+st(v+r),\nabla(w+st(v+r)),\Delta(w+st(v+r))) \cdot \nabla(v+r) \dst \Bigg\rVert_{C^{\alpha}(\ol{\Om})}\\& \quad+ \Bigg\lVert\int_0^1\int_{0}^1 \frac{d}{d s}\p_q Q(x,w+st(v+r),\nabla(w+st(v+r)),\Delta(w+st(v+r))) \Delta(v+r) \dst \Bigg\rVert_{C^{\alpha}(\ol{\Om})}\\
		&\lesssim\Bigg\lVert\int_0^1\int_{0}^1 \p^2_u Q(x,w+st(v+r),\nabla(w+st(v+r)),\Delta(w+st(v+r))) t (v+r)^2 \dst \Bigg\rVert_{C^{\alpha}(\ol{\Om})}\\&+ \Bigg\lVert\int_0^1\int_{0}^1 \p_u\nabla_p Q(x,w+st(v+r),\nabla(w+st(v+r)),\Delta(w+st(v+r))) \cdot t\,\nabla(v+r)(v+r) \dst \Bigg\rVert_{C^{\alpha}(\ol{\Om})}\\&+ \Bigg\lVert\int_0^1\int_{0}^1 \p_q\p_u Q(x,w+st(v+r),\nabla(w+st(v+r)),\Delta(w+st(v+r))) \cdot t\, \Delta(v+r)(v+r) \dst \Bigg\rVert_{C^{\alpha}(\ol{\Om})}\\
		&+\Bigg\lVert\int_0^1\int_{0}^1 \nabla^2_{p} Q(x,w+st(v+r),\nabla(w+st(v+r)),\Delta(w+st(v+r))) : t\, \nabla(v+r)\otimes \nabla(v+r) \dst \Bigg\rVert_{C^{\alpha}(\ol{\Om})}\\
		&+ \Bigg\lVert\int_0^1\int_{0}^1 \p_q\p_q Q(x,w+st(v+r),\nabla(w+st(v+r)),\Delta(w+st(v+r))) \cdot t\, (\Delta(v+r))^2 \dst \Bigg\rVert_{C^{\alpha}(\ol{\Om})}\\
		&+ \Bigg\lVert\int_0^1\int_{0}^1 \p_q\nabla_p Q(x,w+st(v+r),\nabla(w+st(v+r)),\Delta(w+st(v+r))) \cdot t\, \cdot \nabla(v+r)\, \Delta(v+r) \dst \Bigg\rVert_{C^{\alpha}(\ol{\Om})}.
	\end{align*}
	Next using Lemma \ref{Q_deriv_estimate}, we observe that, $\norm{R(v+r)}_{C^{\alpha}(\ol{\Om})} \le C \norm{(v+r)}_{C^{2,\alpha}(\ol{\Om})}^2 \le C \norm{(v+r)}_{C^{4,\alpha}(\ol{\Om})}^2 \le C \delta^2.$
	This shows that $T_v$ is a mapping from $B_{\delta}$ into itself for small $\delta$. The second inequality above proves \eqref{eq:fixed_point_quad_estimate}.
	
	We next prove that $T_v$ is indeed a contraction mapping. To this end, let $r_1,r_2\in B_{\delta}$. Then utilizing the definition $T_v$, we obtain that $T_v(r_1)-T_v(r_2):= G_{Q}(R_v(r_2),0,0)- G_{Q}(R_v(r_1),0,0)$, and it satisfies
	\begin{align}
		\norm{T_v(r_1)-T_v(r_2)}_{C^{4,\alpha}(\ol{\Om})}\le C \norm{R_v(r_1)-R_v(r_2)}_{C^{\alpha}(\ol{\Om})}.
	\end{align}

	We now focus on estimating $C^{\alpha}(\ol{\Om})$-norm of  $R_v(r_1) -R_v(r_2)$. To this end, using Lemma \ref{lem_R_v} we deduce that  
	\begin{align*}
		\norm{R_v(r_1) -R_v(r_2)}_{C^{\alpha}(\ol{\Om})}\le C \norm{u_1-u_2}_{C^{2,\alpha}(\ol{\Om})}(C_1 \delta t + C_2 \delta^2\, t^2).
	\end{align*}
	Next choosing $\delta$ small enough we see that 
	\begin{align*}
		\norm{T_v(r_1) -T_v(r_2)}_{C^{4,\alpha}(\ol{\Om})}\le&   \norm{R_v(r_1) -R_v(r_2)}_{C^{\alpha}(\ol{\Om})} \le \frac{1}{2
		} \norm{u_1-u_2}_{C^{2,\alpha}(\ol{\Om})} \le \frac{1}{2
		} \norm{r_1-r_2}_{C^{4,\alpha}(\ol{\Om})}.
	\end{align*}
	This shows that $T_v$ is a contraction map, and by the Banach fixed point theorem we have that the equation $r=T_v(r)$ has a unique solution in $B_{\delta}$.
\end{proof}
The following lemma provides solutions to the nonlinear equation in the vicinity of the fixed solution \( w \). Our goal is to construct a well-defined solution map \( S_{Q,w} \), where
\[
S_{Q,w}: \text{small solutions of the linearized problem} \longrightarrow \text{solutions of the nonlinear equation close to } w.
\]
More precisely, we establish the following result:
\begin{lemma} \label{lemma_perturbed_sol}
	Let \( Q \in C^k(\Rb,C^{4,\alpha}(\ol{\Omega}), C^{4,\alpha}(\ol{\Omega};\Rnn),C^{4,\alpha}(\ol{\Omega})) \). 
	Let \( w \in C^{k,\alpha}(\overline{\Omega}) \) be a solution of 
	\[
	\Delta^2 w + Q(x, w, \nabla w, \Delta w) = 0.
	\]
	Define the coefficients 
	\begin{align*}
		&	A(x) = \partial_q Q(x, w(x), \nabla w(x), \Delta w(x)), \quad
		X(x) = \nabla_p Q(x, w(x), \nabla w(x), \Delta w(x)), \\
		&	V(x) = \partial_u Q(x, w(x), \nabla w(x), \Delta w(x)).
	\end{align*}
	
	Then the following holds:
	\begin{itemize}
		\item[1.] There exist constants \( \delta, C > 0 \) and a \( C^{k-1} \) map \( \Phi = \Phi_{Q,w}: B_{\delta} \to B_{\delta} \) satisfying 
		\[
		\Phi(B_{\delta})|_{\partial \Omega} = 0, \quad \text{and} \quad \Phi(0) = D\Phi(0) = 0.
		\]
		Moreover, the following estimate holds:
		\begin{equation} \label{lemma_estimate_r}
			\|\Phi(v)\|_{C^{4,\alpha}(\overline{\Omega})} \leq C \|v\|_{C^{4,\alpha}(\overline{\Omega})}^2,
		\end{equation}
		and the map \( S_{Q} = S_{Q,w}: B_{\delta} \to C^{4,\alpha}(\overline{\Omega}) \), defined by 
		$ u = S_{Q,w}(v) = w + v + \Phi(v)$\\
		is a \( C^{k-1} \) map satisfying 
		\begin{equation} \label{u_v_eq}
			\Delta^2 u + Q(x, u, \nabla u, \Delta u) = \Delta^2 v + A \Delta v + X \cdot \nabla v + V v \quad \text{in } \Omega,
		\end{equation}
		with \( S_{Q,w}'(0)v = v \). In particular, if \( v \) solves 
		$
		\Delta^2 v + A \Delta v + X \cdot \nabla v + V v = 0,$\\
		then \( u = S_{Q,w}(v) \) solves 
		$
		\Delta^2 u + Q(x, u, \nabla u, \Delta u) = 0.$
		
		\item[2.] Conversely, if \( \delta \) is sufficiently small, then for any solution \( u \in C^{4,\alpha}(\overline{\Omega}) \) of 
		\[
		\Delta^2 u + Q(x, u, \nabla u, \Delta u) = 0
		\]
		with \( \|u - w\|_{C^{4,\alpha}(\overline{\Omega})} \leq \delta \), there exists a unique solution \( v \in C^{2,\alpha}(\overline{\Omega}) \) of 
		\[
		\Delta^2 v + A \Delta v + X \cdot \nabla v + V v = 0
		\]
		such that \( u = S_{Q,w}(v) \). The function \( v \) is explicitly given by 
		\begin{equation} \label{u_v_converse}
			v = G_Q(0, (u - w)|_{\partial \Omega}, \Delta(u - w)|_{\partial \Omega}),
		\end{equation}
		and satisfies the estimate 
		\[
		\|v\|_{C^{4,\alpha}(\overline{\Omega})}  \lesssim \left( \|u - w\|_{C^{4,\alpha}(\overline{\Omega})} + \|\Delta(u - w)\|_{C^{4,\alpha}(\overline{\Omega})} \right).
		\]
	\end{itemize}
\end{lemma}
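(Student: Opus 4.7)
The plan is to repackage the fixed-point construction of Lemma \ref{lemma_fixed_point} into a solution map. For each $v \in B_\delta$, that lemma furnishes a unique $r \in B_\delta$ solving $P_Q r = -R(v+r)$ with zero Navier data; I will define $\Phi(v) := r$ and $S_{Q,w}(v) := w + v + \Phi(v)$. The Navier vanishing $\Phi(v)|_{\partial\Omega} = \Delta\Phi(v)|_{\partial\Omega} = 0$ is built into the fixed-point equation, and $\Phi(0) = 0$ follows from uniqueness since $r = 0$ solves it when $v = 0$ (because $R(0) = 0$).

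\textbf{The identity and the quadratic estimate.} With $u = w + v + r$, I expand $Q(x,u,\nabla u, \Delta u)$ around $w$ using the fundamental theorem of calculus; together with the definition \eqref{def_capital_R} of $R$ applied at $h = v + r$, this gives
\begin{equation*}
Q(x, u, \nabla u, \Delta u) = Q(x, w, \nabla w, \Delta w) + V(v+r) + X \cdot \nabla(v+r) + A\,\Delta(v+r) + R(v+r).
\end{equation*}
Using $\Delta^2 w + Q(x, w, \nabla w, \Delta w) = 0$ together with the fixed-point equation $P_Q r = -R(v+r)$, the expression $\Delta^2 u + Q(x,u,\nabla u,\Delta u)$ collapses to $P_Q v = \Delta^2 v + A\Delta v + X\cdot\nabla v + Vv$, which is \eqref{u_v_eq}. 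The quadratic estimate \eqref{lemma_estimate_r} then follows by feeding $r = T_v(r)$ into \eqref{eq:fixed_point_quad_estimate}: $\|r\|_{C^{4,\alpha}(\oom)} \lesssim \|v+r\|_{C^{4,\alpha}(\oom)}^2 \le 2\|v\|_{C^{4,\alpha}(\oom)}^2 + 2\|r\|_{C^{4,\alpha}(\oom)}^2$, and since $\|r\|_{C^{4,\alpha}(\oom)} \le \delta$ the $\|r\|^2$ term is absorbed for $\delta$ small enough.

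\textbf{Smoothness of $\Phi$ and the converse.} For the $C^{k-1}$-regularity of $\Phi$ with $D\Phi(0) = 0$, I will invoke the Banach-space implicit function theorem on
\begin{equation*}
F(v, r) := r + G_Q\bigl(R(v+r), 0, 0\bigr),
\end{equation*}
which is $C^{k-1}$ by the chain rule using \eqref{eq_reg_Q} together with the linear boundedness of $G_Q$ from Lemma \ref{lem_regularity}. Since $R(0) = 0$ and $DR(0) = 0$ (forced by the quadratic bound $\|R(h)\|_{C^\alpha(\oom)} \lesssim \|h\|_{C^{4,\alpha}(\oom)}^2$), we have $F(0,0) = 0$ and $\partial_r F(0,0) = \Id$; differentiating the resulting relation $\Phi(v) = -G_Q(R(v+\Phi(v)), 0, 0)$ at $v = 0$ yields $D\Phi(0) = 0$. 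For the converse, given $u$ with $\|u-w\|_{C^{4,\alpha}(\oom)} \le \delta$, define $v$ by \eqref{u_v_converse}; then $v$ solves the linearized equation (zero source, Navier data matching $u-w$) and obeys the stated bound by Lemma \ref{lem_regularity}. Setting $r := u - w - v$ has zero Navier data, and the same Taylor expansion combined with $\Delta^2 u + Q(x,u,\nabla u, \Delta u) = 0$ and $P_Q v = 0$ shows $P_Q r = -R(v+r)$; since $\|r\|_{C^{4,\alpha}(\oom)}$ can be made as small as needed by shrinking $\delta$, the uniqueness clause of Lemma \ref{lemma_fixed_point} forces $r = \Phi(v)$, hence $u = S_{Q,w}(v)$.

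\textbf{Main obstacle.} The only delicate point is the $C^{k-1}$ differentiability of $\Phi$, which reduces to the differentiability of the Nemytskii-type map $h \mapsto R(h)$ from $C^{4,\alpha}(\oom)$ to $C^\alpha(\oom)$; this is precisely where the $C^k$-regularity hypothesis \eqref{eq_reg_Q} on $Q$ is used. Everything else is bookkeeping around the Taylor expansion of $Q$ at $w$.
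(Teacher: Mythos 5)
Your proposal is correct and follows essentially the same strategy as the paper's own proof: define $\Phi(v)$ via the fixed point from Lemma \ref{lemma_fixed_point}, recover \eqref{u_v_eq} from the Taylor expansion of $Q$ around $w$, derive \eqref{lemma_estimate_r} by absorbing $\|r\|^2$, obtain $C^{k-1}$-smoothness via the implicit function theorem applied to $F(v,r) = r + G_Q(R(v+r),0,0)$, and in the converse construct $v = G_Q(0,(u-w)|_{\partial\Omega},\Delta(u-w)|_{\partial\Omega})$ and apply the uniqueness clause of the fixed-point lemma to identify $r = \Phi(v)$. The one point you leave implicit is the \emph{uniqueness} of $v$ in part 2: the paper notes that if $u = S_{Q,w}(v_1) = S_{Q,w}(v_2)$ then $v_1 - v_2$ solves the homogeneous linearized Navier problem and hence vanishes by Lemma \ref{lem_regularity}; this short argument should be added to close the proof of part 2.
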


\begin{proof}
	The goal is to construct the map $S_Q$. We look for the solution $u$ of \eqref{u_v_eq} in the form\\$u= w+v+r$ and formulate a fixed point equation for $r$. A use of Taylor series expansion implies
	\begin{align*}
		\Delta^2 u +  Q(x,u ,\nabla u, \Delta u)&= \Delta^2 (w+v+r) +  Q(x,w+v+r ,\nabla (w+v+r), \Delta (w+v+r))\\
		&= \Delta^2 w+ \Delta^2(v+r) +  Q(x,w ,\nabla w, \Delta w)\\& \qquad+ [A\Delta  + X\cdot\nabla  + V](v+r)+R_v(r), 
	\end{align*}
	where $R_v(r)= R(v+r)$ is given by 
	\begin{align*}
		R(h) &:= \int_0^1 [\p_u Q(x,w+th,\nabla(w+th),\Delta(w+th)) - \p_u Q(x,w,\nabla w,\Delta w)]h \,\D t \\\notag
		&\qquad+ \int_0^1 [\nabla_p Q(x,w+th,\nabla(w+th),\Delta(w+th)) - \nabla_p Q(x,w,\nabla w,\Delta w)]\cdot \nabla h \,\D t\\\notag
		&\qquad+ \int_0^1 [\p_q Q(x,w+th,\nabla(w+th),\Delta(w+th)) - \p_q Q(x,w,\nabla w,\Delta w)]\,\Delta h \,\D t.
	\end{align*}
	Since $w$ is a solution of $  \Delta^2 u +  Q(x,u ,\nabla u, \Delta u)=0$, this implies $u$ solves 
	\eqref{u_v_eq} if $r$ satisfies
	\begin{align*}
		\Delta^2 r + A \Delta r + X \cdot \nabla r +V r= - R_v(r) \quad \mbox{in $\Omega$}.
	\end{align*}
	By Lemma \ref{lemma_fixed_point}, we have that for each $v\in B_{\delta}$ there is a unique $r=r_v$ satisfying the above equation. Hence the mapping $ v\mapsto r_v$ is well-posed for $v \in B_{\delta}$. Next we show that this map is indeed a $C^{k-1}$ map using implicit function theorem. To this end, define the map
	\begin{align}\label{def_of_F}
		F: C^{4, \alpha}(\ol{\Om}) \times C^{4, \alpha}(\ol{\Om}) \rightarrow C^{4, \alpha}(\ol{\Om}) \quad \mbox{by} \quad F(v,r)= r- T_v(r)= r+ G_{Q}(R_v (r),0,0).
	\end{align}
	Note that $F$ satisfies $ F(0,0)=0$. Next we see that  $F\in C^{k-1}$, since $R_v\in C^{k-1}$ and $G_Q$ is linear. Moreover, $( D_rF)|_{(0,0)}(h)=h.$
	This implies it is a linear isomorphism from $ C^{4, \alpha}(\ol{\Om})$ into itself. Hence by the implicit function theorem, there exist open balls $B_{\delta_1}$ and $B_{\delta_2}$ and a $C^{k-1}$ map $\Phi: B_{\delta_1}\rightarrow B_{\delta_2}$ such that 
	\begin{align*}
		F(v, \Phi(v))=0.
	\end{align*}
	Note that $r_v$ found by Lemma \ref{lemma_fixed_point} is the unique solution of $F(v, \cdot)=0$ in $B_{\delta}$ for $ v\in B_{\delta}$. Thus we conclude  that for $\delta< \min \{\delta_1, \delta_2\}$, $r_v \in B_{\delta_2} $, and $\Phi(v)= r_v
	$. Consequently, we have show that for each $v\in B_{\delta}$ there is a unique $r_v\in B_{\delta}$ such that $u=w+v+r_v$ is a solution of \eqref{u_v_eq}. Moreover, the map
	\begin{align*}
		S_{Q,w}: B_{\delta}\rightarrow C^{4,\alpha}(\ol{\Om}) \quad \mbox{ given by} \quad  \mbox{$  u_v=S_{Q,w}(v)$ is $C^{k-1}$}. 
	\end{align*}
	We next show that $\Phi$ satisfies the other properties. By \eqref{eq:fixed_point_quad_estimate} we have that 
	\begin{align*}
		\norm{r}_{C^{4,\alpha}(\ol{\Om})}= \norm{T_v(r)}_{C^{4,\alpha}(\ol{\Om})}\le C \norm{(v+r)}^2_{C^{4,\alpha}(\ol{\Om})}\le C \left(\norm{r}^2_{C^{4,\alpha}(\ol{\Om})}+ \norm{v}^2_{C^{4,\alpha}(\ol{\Om})}\right).
	\end{align*}
	This further entails $\norm{v}^2_{C^{4,\alpha}(\ol{\Om})}\ge  \norm{r}_{C^{4,\alpha}(\ol{\Om})} (1-C \delta)$, since $\norm{r}_{C^{4,\alpha}(\ol{\Om})}\le \delta$.
	Next using the fact that $\Phi(v)=r$ and $\delta$ small enough we conclude that
	\begin{align*}
		\norm{\Phi(v)}_{C^{4,\alpha}(\ol{\Om})}   \le C\norm{v}^2_{C^{4,\alpha}(\ol{\Om})}, 
	\end{align*}
	which in turn implies \eqref{lemma_estimate_r}, and $\Phi(0)=0$. The preceding estimate further gives that $ D \Phi(0)=0$. This completes the proof of first part. 
	
	We now focus on the converse statement.
	Suppose $u$ solves $ \Delta^2 u + Q(x, u, \nabla u , \Delta u)=0$ in $ \Omega$ and $ \norm{u-w}_{C^{4,\alpha}(\ol{\Om})}\le \delta $. Denote $ \tilde{u}= u-w$ and we wish to construct $v$ solving the linear partial differential equation $ (\Delta^2+ A \Delta + X\cdot \nabla +V ) v=0$ in $\Omega$ such that $ \tilde{ u}= v+ \Phi (v)$. Let $ \phi$ be the unique solution of \eqref{eq:boundary_value_problem} given by Lemma \ref{lem_regularity} with $F=0$ and $(f_0,f_1)=( \tilde{u}, \Delta \tilde{u})|_{\p\Om}$. Denote $v=\phi$,  this gives
	\begin{align*}
		\norm{v}_{C^{4,\alpha}(\ol{\Om})}   \le C \left( \norm{\tilde{u}}_{C^{4,\alpha}(\ol{\Om})} + \norm{\Delta \tilde{u}}_{C^{2,\alpha}(\ol{\Om})}  \right).
	\end{align*}
	It remains to show that $ r= u-w-v$ satisfies $r=\Phi(v)$. To this end, we observe that 
	\begin{align*}
		&
		(\Delta^2  + A\Delta  + X\cdot\nabla  + V)r \\&=   (\Delta^2  + A\Delta  + X\cdot\nabla  + V)(u-w)\\
		&= Q ( x, w, \nabla w, \Delta w)- Q ( x, u, \nabla u, \Delta u) - (  A\Delta  + X\cdot\nabla  + V)(u-w) \\
		&= Q ( x, w, \nabla w, \Delta w)- Q ( x, (w+v+r), \nabla (w+v+r), \Delta (w+v+r)) - (  A\Delta  + X\cdot\nabla  + V)(v+r)\\
		&=- R_v(r).
	\end{align*}
	The first part of the proof ensures that $r= \Phi(v)$ if $\delta$ is chosen small enough. This proves that $u=S_{Q,w}(v)$. To show the uniqueness of $v$ we assume that $u=S_{Q,w}(v_1)=S_{Q,w}(v_2)$. Then by definition we have $ v_1-v_2= r_1-r_2= \Phi(v_1)-\Phi(v_2)$. Since $ (v_1-v_2)$ solves 
	\begin{equation*}
		\begin{aligned}
			(\Delta^2+ A \Delta + X\cdot \nabla +V) u&=0 
			\quad \mbox{in $ \Omega$}\\
			( u, \Delta u)=(0,0) \quad \mbox{on $ \p\Om$}.
		\end{aligned}
	\end{equation*}
	Hence by Lemma \ref{lem_regularity} we have that $v_1=v_2$, proving the uniqueness of $v$. The proof is complete.
\end{proof}
To state our next result we recall that $	\V_{A,X,V}= \{ u\in {C^{4,\alpha}(\ol{\Om})}: (\Delta^2+ A \Delta + X\cdot \nabla +V) u=0\}.$
\begin{lemma}\label{lem_differomorphism}
	In the setting of Lemma \ref{lemma_perturbed_sol}, if $v$ is small and solves $ (\Delta^2+ A \Delta + X\cdot \nabla +V) u=0$ for $A=\p_qQ(x,w,\nabla w,\Delta w)$, $X=\nabla_pQ(x,w,\nabla w,\Delta w)$ and $V=\p_qQ(x,w,\nabla w,\Delta w)$. Define
	\begin{align*}
		A_v(x) &:= \p_qQ(x, S_{Q,w}(v),\nabla S_{Q,w}(v),\Delta S_{Q,w}(v)),\\
		X_v(x)& := \nabla_pQ(x, S_{Q,w}(v),\nabla S_{Q,w}(v),\Delta S_{Q,w}(v)),\\
		V_v(x)& := \p_u Q(x, S_{Q,w}(v),\nabla S_{Q,w}(v),\Delta S_{Q,w}(v)).
	\end{align*}
	If $v\in \V_{A,X,V}$ is small, then the map $ DS_{Q,w}$ is an isomorphism from $\V_{A,X,V}$ onto $ \V_{A_v,X_v,V_v}$.
\end{lemma}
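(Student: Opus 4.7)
The plan is to establish the claim in two steps: first verify that $DS_{Q,w}(v)$ does map $\V_{A,X,V}$ into $\V_{A_v,X_v,V_v}$, and then produce a two-sided inverse using the converse part of Lemma \ref{lemma_perturbed_sol} applied at the base point $u_v := S_{Q,w}(v)$ together with the chain rule.

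For the mapping property, I would fix $\eta \in \V_{A,X,V}$ and consider the curve $s \mapsto v + s\eta$, which lies in $\V_{A,X,V} \cap B_{\delta}$ for $|s|$ small since $\V_{A,X,V}$ is a linear subspace and $v$ is interior to $B_\delta$. By Lemma \ref{lemma_perturbed_sol}, $u_s := S_{Q,w}(v+s\eta)$ satisfies the nonlinear equation $\Delta^2 u_s + Q(x,u_s,\nabla u_s,\Delta u_s)=0$ in $\Om$ for each such $s$. Since $S_{Q,w}$ is $C^{k-1}$ with $k-1\ge 2$, I differentiate this identity at $s=0$ via the chain rule to obtain
\[
(\Delta^2 + A_v\Delta + X_v\cdot\nabla + V_v)\dot u = 0, \qquad \dot u := DS_{Q,w}(v)\eta,
\]
so $DS_{Q,w}(v)$ indeed sends $\V_{A,X,V}$ into $\V_{A_v,X_v,V_v}$.

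For the inverse, I would apply Lemma \ref{lemma_perturbed_sol} a second time, now at the base solution $u_v$, obtaining a $C^{k-1}$ solution map $S_{Q,u_v}\colon B_{\delta'}\to C^{4,\alpha}(\ol{\Om})$ with $S_{Q,u_v}(0)=u_v$ and $DS_{Q,u_v}(0)=\Id$, which sends elements of $\V_{A_v,X_v,V_v}\cap B_{\delta'}$ to solutions of the nonlinear equation close to $u_v$. For $v$ small enough, $u_v$ is close to $w$ in $C^{4,\alpha}(\ol{\Om})$, so any solution produced by $S_{Q,u_v}$ remains in the range of the converse part of Lemma \ref{lemma_perturbed_sol} applied at $w$; using formula \eqref{u_v_converse}, I set
\[
\Psi(\tilde v) := G_Q\bigl(0,\,(S_{Q,u_v}(\tilde v)-w)|_{\p\Om},\,\Delta(S_{Q,u_v}(\tilde v)-w)|_{\p\Om}\bigr).
\]
By linearity of $G_Q$ in the boundary data and the $C^{k-1}$ regularity of $S_{Q,u_v}$, $\Psi$ is a $C^{k-1}$ map from a neighborhood of $0$ in $\V_{A_v,X_v,V_v}$ into $\V_{A,X,V}$, with $\Psi(0)=v$, and the uniqueness clause of the converse part of Lemma \ref{lemma_perturbed_sol} gives $S_{Q,w}\circ\Psi = S_{Q,u_v}$ on the overlap. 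Differentiating at $\tilde v=0$ yields $DS_{Q,w}(v)\circ D\Psi(0) = DS_{Q,u_v}(0) = \Id$ on $\V_{A_v,X_v,V_v}$, which furnishes a right inverse.

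A symmetric construction, swapping the roles of $(Q,w)$ and $(Q,u_v)$ and using the analogous solution operator at $u_v$ in place of $G_Q$, supplies a $C^{k-1}$ map $\widetilde\Psi$ from a neighborhood of $v$ in $\V_{A,X,V}$ into $\V_{A_v,X_v,V_v}$ satisfying $S_{Q,u_v}\circ\widetilde\Psi = S_{Q,w}$ near $v$; the chain rule then produces the left inverse of $DS_{Q,w}(v)$, and together with the right inverse this upgrades $DS_{Q,w}(v)$ to a linear isomorphism from $\V_{A,X,V}$ onto $\V_{A_v,X_v,V_v}$. The main obstacle is the neighborhood bookkeeping—choosing $\delta$ small enough that both applications of Lemma \ref{lemma_perturbed_sol} (at $w$ and at $u_v$) are simultaneously valid on overlapping open sets and that the uniqueness clause in its converse actually forces the intertwining $S_{Q,w}\circ\Psi = S_{Q,u_v}$ rather than mere equality as abstract solutions of the nonlinear equation. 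Once the $C^{k-1}$ intertwining maps are in hand, the linear-algebraic step is immediate from the chain rule and the normalization $DS_{Q,u_v}(0)=\Id$.
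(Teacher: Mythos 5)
Your mapping-property argument and your right-inverse construction match the paper's proof of surjectivity step for step: the paper likewise sets $u_t = S_{Q,S_{Q,w}(v)}(t\tilde h)$, invokes the converse part of Lemma \ref{lemma_perturbed_sol} to write $u_t = S_{Q,w}(v_t)$ via \eqref{u_v_converse}, and differentiates at $t=0$ to obtain $DS_{Q,w}(v)\dot v_0 = \tilde h$. That part of the proposal is sound.

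The weak spot is the left-inverse step. From the intertwining $S_{Q,u_v}\circ\widetilde\Psi = S_{Q,w}$ alone, the chain rule at $v$ gives, since $\widetilde\Psi(v)=0$ and $DS_{Q,u_v}(0)=\Id$, only the identity $D\widetilde\Psi(v) = DS_{Q,w}(v)$ --- this is circular and is \emph{not} a left-inverse relation. To extract a left inverse you actually need the stronger statement that $\Psi$ and $\widetilde\Psi$ are mutual local inverses, i.e.\ $\Psi\circ\widetilde\Psi=\Id$ near $v$ in $\V_{A,X,V}$; this does follow from the two intertwinings plus the uniqueness clause of Lemma \ref{lemma_perturbed_sol}, but it must be said and it is $\Psi\circ\widetilde\Psi=\Id$, not the intertwining identity, that you must differentiate to obtain $D\Psi(0)\circ DS_{Q,w}(v)=\Id$. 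As written the claim "the chain rule then produces the left inverse" does not hold.

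The paper avoids this bookkeeping entirely by proving injectivity directly from the structure $S_{Q,w}(v) = w + v + \Phi(v)$: then $DS_{Q,w}(v) = \Id + D\Phi(v)$, and since $\Phi\in C^{k-1}$ with $D\Phi(0)=0$ one has $\norm{D\Phi(v)}\le\tfrac12$ for $v$ small, so $DS_{Q,w}(v)h=0$ forces $\norm{h}\le\tfrac12\norm{h}$ and hence $h=0$. Bijectivity plus boundedness plus the open mapping theorem then gives the isomorphism. This is shorter than building an explicit two-sided inverse, though your route (once the mutual-inverse step is filled in) does have the merit of not needing the open mapping theorem, since the inverse comes out continuous by construction.
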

\begin{proof}
	Define  $v_t= v+th$ for $t$ small and $h\in \V_{A,X,V} $. Then $u_t= S_{Q,w}(v_t)$ solves  $ \Delta^2 u + Q( x, u, \nabla u,\Delta u)=0$ in $ \Om$. Since $u_t$ is smooth in $t$, the function $\dot{u}_0= \p_t u_t|_{t=0}= DS_{Q,w}(v)h$ satisfies 
	\begin{align*}
		(\Delta^2  + A_v \Delta + X_v(x)\cdot \nabla + V_v(x))\dot{u}_0=0.
	\end{align*}
	This shows that $DS_{Q,w}$ is a mapping from $\V_{A,X,V}$ into $\V_{A_v,X_v,V_v}$.
	
	Suppose $v\in V_{A,X,V} $ is small. For $t$ small define $u_t= S_{Q,S_{Q,w}(v)}(t\tilde{h})$, then by the converse part of the Lemma \ref{lemma_perturbed_sol}, we have $ u_t= S_{Q,w}(v_t)$ for a unique small solution $v_t\in  \V_{A,X,V} $ and it is given by 
	\begin{align*}
		v_t= G_{Q}(0, (u_t-w)|_{\p\Om}, \Delta (u_t-w)|_{\p\Om}).
	\end{align*}
	Since $S_{Q,w}(v_0) = u_0= S_{Q,S_{Q,w}(v)}(0)= S_{Q,w}(v) $, this implies $v=v_0$. Furthermore, differentiating the relations $ u_t= S_{Q,w}(v_t) $ and $u_t=S_{Q,S_{Q,w}(v)}(t\tilde{h}) $ with respect to $t$ we obtain
	\begin{align*}
		DS_{Q,w}(v)\dot{v}_0= \dot{u}_0= DS_{Q,S_{Q,w}(v)}(0)\tilde{h}= \tilde{h}
	\end{align*}
	This shows that $ DS_{Q,w}: \V_{A,X,V} \rightarrow \V_{A_v,X_v,V_v}$ surjective. 
	
	Next we show that for $h \in \V_{A,X,V}  $ such that $DS_{Q,w}(v) h=0$, then $h=0$. Since $S_{Q,w} (v)= w+v+ \Phi(v)$, this together with $DS_{Q,w}(v) h=0$ implies  $h + D\Phi(v)h=0$. However, $D\Phi(0)=0$ together with $Q$ being at least $C^2$ entails $ \norm{D\Phi(v)} \le \frac{1}{2}$, when $v $ is small enough. This implies $ \norm{h}\le \frac{1}{2} \norm{h} \implies h=0$. Thus $DS_{Q,w}$ is bijective and bounded, and by open mapping theorem it is an isomorphism. This completes the proof.
\end{proof}

\section{Estimates for solutions based on their Cauchy data}\label{sec:estimating_Cauchy_data}

We follow the method presented in \cite{JNS2023}. The proofs of the following Lemmas are similar to the ones in \cite{JNS2023}*{Section 3} but we record them here for completeness.

Let $u\in C^{4,\alpha}(\ol{\Omega})$, then we define three operators $B_{\p\Om,\mathcal{N}}\colon C^{4,\alpha}(\ol{\Om})\to C^{4,\alpha}(\p\Om)\times C^{3,\alpha}(\p\Om)\times C^{2,\alpha}(\p\Om)\times C^{1,\alpha}(\p\Om),$ $B_{\p\Om,\mathcal{N}}^N\colon C^{4,\alpha}(\p\Omega) \rightarrow \Rb$ and $L\colon C^{4,\alpha}(\Omega) \rightarrow C^{\alpha}(\Omega) $ by 
\begin{align}
	B_{\p\Om,\mathcal{N}}(u)&:= (u|_{\p\Om},\p_{\nu}u|_{\p\Om}, \Delta u|_{\p\Om}, \p_{\nu}\Delta u|_{\p\Om}), \\
	B_{\p\Om,\mathcal{N}}^N(u) &:=\norm{u}_{C^{4,\alpha}(\p\Omega)}+ \norm{\p_{\nu}u}_{C^{3,\alpha}(\p\Omega)} +\norm{\Delta u}_{C^{2,\alpha}(\p\Omega)} +\norm{\p_{\nu}\Delta u}_{C^{1,\alpha}(\p\Omega)},\\
	Lu&:= (\Delta^2+ A(x)\Delta + X(x)\cdot \nabla + V(x))u.
\end{align}
Here $V,A\in C^{k,\alpha}(\ol{\Om})$ and $X\in C^{k,\alpha}(\ol{\Om};\Rnn)$.

\begin{lemma}\label{Lemma_est_1}
	Let $\Omega\subset \Rnn$ be a bounded domain with smooth boundary. Then there is a constant 
	$C>0$ such that for any $u\in C^{4,\alpha}(\ol{\Omega})$ one has
	\begin{align}
		\norm{u}_{C^{4,\alpha}(\ol{\Omega})}\lesssim \left(\norm{u}_{C^{4,\alpha}(\p\Omega)}+ \norm{Lu}_{C^{\alpha}(\ol{\Omega})} + \norm{u}_{H^2(\Omega)} \right).
	\end{align}
	
\end{lemma}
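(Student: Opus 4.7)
The plan is to prove the bound by a compactness-and-contradiction argument built on top of Lemma~\ref{lem_regularity}. The three terms on the right play distinct roles: the boundary norm $\norm{u}_{C^{4,\alpha}(\partial\Omega)}$ and the inhomogeneity $\norm{Lu}_{C^\alpha(\overline{\Omega})}$ feed directly into the Schauder estimate for the Navier boundary-value problem, while the $H^2$-norm is intended as a lower-order term that rules out the kernel and pins the missing boundary datum $\Delta u|_{\partial\Omega}$.

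First, I would assume for contradiction that the claimed estimate fails, producing a sequence $\{u_k\}\subset C^{4,\alpha}(\overline{\Omega})$ with $\norm{u_k}_{C^{4,\alpha}(\overline{\Omega})}=1$ while
\[
\norm{u_k}_{C^{4,\alpha}(\partial\Omega)} + \norm{Lu_k}_{C^\alpha(\overline{\Omega})} + \norm{u_k}_{H^2(\Omega)} \longrightarrow 0.
\]
Since the embedding $C^{4,\alpha}(\overline{\Omega}) \hookrightarrow C^{4,\beta}(\overline{\Omega})$ is compact for each $0<\beta<\alpha$, Arzel\`a--Ascoli provides a subsequence (still denoted $u_k$) converging in $C^{4,\beta}(\overline{\Omega})$ to some $u_\infty\in C^{4,\alpha}(\overline{\Omega})$. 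Passing to the limit in the right-hand side, $u_\infty|_{\partial\Omega}=0$, $Lu_\infty=0$, and $\norm{u_\infty}_{H^2(\Omega)}=0$, so $u_\infty \equiv 0$ and hence $u_k\to 0$ in $C^{4,\beta}(\overline{\Omega})$.

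Next I would invoke Lemma~\ref{lem_regularity} with $F=Lu_k$ and boundary data $(f_0,f_1)=(u_k|_{\partial\Omega},\,\Delta u_k|_{\partial\Omega})$, which, by the uniqueness in that lemma, identifies $u_k$ itself with the produced solution and yields
\[
\norm{u_k}_{C^{4,\alpha}(\overline{\Omega})} \;\lesssim\; \norm{Lu_k}_{C^\alpha(\overline{\Omega})} + \norm{u_k|_{\partial\Omega}}_{C^{4,\alpha}(\partial\Omega)} + \norm{\Delta u_k|_{\partial\Omega}}_{C^{2,\alpha}(\partial\Omega)}.
\]
The first two terms tend to zero by hypothesis while the left-hand side equals $1$, so everything reduces to showing $\norm{\Delta u_k|_{\partial\Omega}}_{C^{2,\alpha}(\partial\Omega)}\to 0$.

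The main obstacle is precisely this last step: from $u_k\to 0$ in $C^{4,\beta}(\overline{\Omega})$ one only obtains $\Delta u_k|_{\partial\Omega}\to 0$ in $C^{2,\beta}(\partial\Omega)$, which is one H\"older exponent short of what the Schauder estimate requires. Closing this gap is where the $H^2$ hypothesis has to bite: one route is a blow-up/rescaling at a sequence of boundary points realising the $C^{2,\alpha}$ seminorm, which -- if the seminorm did not vanish -- would yield a nontrivial entire solution on a half-space ruled out by a Liouville-type statement; an alternative is to view $w_k=\Delta u_k$ as a solution of the second-order equation $\Delta w_k = Lu_k - A w_k - X\cdot\nabla u_k - V u_k$ and combine the $H^2$-control on $u_k$ (hence on $w_k$ in $L^2$) with interior Schauder/trace estimates for this factored problem to upgrade the boundary convergence from $C^{2,\beta}$ to $C^{2,\alpha}$. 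Either way, this is the technical heart of the proof, and the presence of $\norm{u}_{H^2(\Omega)}$ on the right-hand side of the lemma is essential to make the blow-up analysis or the secondary elliptic estimate go through.
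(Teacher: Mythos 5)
Your proposal takes a genuinely different route from the paper. The paper does not argue by contradiction at all: it defines a linear map $T\colon C^{4,\alpha}(\overline{\Omega})\to C^{4,\alpha}(\partial\Omega)\times C^{\alpha}(\overline{\Omega})\times H^{2}(\Omega)$, $T(u)=(u|_{\partial\Omega},Lu,j(u))$, checks that $T$ is continuous, injective, and has closed range, and then reads off the estimate from the open mapping theorem. That abstract argument never needs to estimate the missing Navier boundary datum $\Delta u|_{\partial\Omega}$ at top H\"older order, which is exactly the quantity your argument cannot control.

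The gap you flag is real and your proposal does not close it. Compactness of $C^{4,\alpha}(\overline{\Omega})\hookrightarrow C^{4,\beta}(\overline{\Omega})$ only yields $u_k\to 0$ in $C^{4,\beta}$, hence $\Delta u_k|_{\partial\Omega}\to 0$ in $C^{2,\beta}(\partial\Omega)$ with $\beta<\alpha$, while the Schauder estimate of Lemma~\ref{lem_regularity} needs $\norm{\Delta u_k|_{\partial\Omega}}_{C^{2,\alpha}(\partial\Omega)}\to 0$. Since that boundary norm is itself of the same order as $\norm{u_k}_{C^{4,\alpha}(\overline{\Omega})}$ (the quantity you normalized to $1$), this is not a bookkeeping slip but a genuine circularity, and the $H^{2}$ term has already been spent in identifying the limit as zero. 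Neither of your two suggested repairs is actually carried out: the factored equation $\Delta w_k=Lu_k-Aw_k-X\cdot\nabla u_k-Vu_k$ for $w_k=\Delta u_k$ comes with no boundary condition on $w_k$, so interior Schauder estimates do not reach $\partial\Omega$ and cannot upgrade the trace convergence from $C^{2,\beta}$ to $C^{2,\alpha}$; and the blow-up/Liouville route would require a scaling analysis reconciling the $H^{2}$ control with the top H\"older seminorm which you do not supply and which is not routine. As written the proof is incomplete at exactly the step you correctly single out as its technical heart, and it is unclear that either of the two sketched continuations would succeed without substantial further argument.
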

\begin{proof}
	We first define the Banach space $X=C^{4,\alpha}(\p\Om)\times C^{\alpha}(\ol{\Om})\times H^{2}(\Om)$, with
	\begin{equation*}
		\norm{(f,F,u)}_X=\norm{f}_{C^{4,\alpha}(\p\Om)}+\norm{F}_{C^{\alpha}(\ol{\Omega})}+\norm{u}_{H^2(\Om)},
	\end{equation*}
	and an operator $T\colon C^{4,\alpha}(\ol{\Omega})\to X$,
	\begin{equation*}
		T(u)=(u|_{\p\Om},Lu,j(u)).
	\end{equation*}
	Here $j\colon C^{4,\alpha}(\ol{\Omega})\to H^2(\Om)$ is the natural inclusion operator. Now $T$ is continuous, injective and linear. If we would also have that $T$ is bijective onto $\mathrm{Ran}(T)$ then by the open mapping theorem we would have a bounded inverse $S\colon \mathrm{Ran}(T)\to C^{2,\alpha}(\ol{\Om})$ and thus
	\begin{equation*}
		\norm{u}_{C^{2,\alpha}(\ol{\Om})}=\norm{STu}_{C^{2,\alpha}(\ol{\Om})}\leq C\norm{Tu}_{X}
	\end{equation*}
	for any $u\in C^{2,\alpha}(\ol{\Om})$, which would prove the claim.
	
	To show bijectivity of $T$ we need to prove that $\mathrm{Ran}(T)$ is closed. Let $u_j\in C^{4,\alpha}(\ol{\Om})$ such that $T(u_j)\to (f,F,u)$ in $X$. Then $u_j|_{\p\Om}\to f$ in $C^{4,\alpha}(\p\Om)$, $Lu_j\to F$ in $C^{\alpha}(\ol{\Om})$ and $u_j\to u$ in $H^2(\Om)$. On the other hand also $u_j|_{\p\Om}\to u|_{\p\Om}$ in $H^{1/2}(\p\Om)$, $Lu_j\to Lu$ in $H^{-2}(\Om)$ and thus by uniqueness of limits we get $Lu=F$ and $u_{\p\Om}=f$. Then by elliptic regularity $u\in C^{4,\alpha}(\ol{\Om})$ and thus $T(u)=(f,F,u)$ which shows that $\mathrm{Ran}(T)$ is closed.
\end{proof}

\begin{lemma}\label{Lemma_est_2}
	Let $\Omega\subset \Rnn$ be a bounded domain with smooth boundary. Then there is a constant 
	$C>0$ such that for any $u\in C^{4,\alpha}(\ol{\Omega})$ one has
	\begin{align}
		\norm{u}_{C^{4,\alpha}(\ol{\Om})}\lesssim \left(B_{\p\Om,\mathcal{N}}^N(u)+ \norm{Lu}_{C^{\alpha}(\ol{\Om})}  \right).
	\end{align}
\end{lemma}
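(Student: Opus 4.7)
The plan is to combine Lemma \ref{Lemma_est_1} with a compactness and unique continuation argument to absorb the $H^2$ term appearing in the right-hand side of Lemma \ref{Lemma_est_1}. Since $\norm{u}_{C^{4,\alpha}(\p\Om)} \le B^N_{\p\Om,\Nc}(u)$, Lemma \ref{Lemma_est_2} will follow from Lemma \ref{Lemma_est_1} once we establish the auxiliary bound
\begin{equation*}
\norm{u}_{H^{2}(\Om)} \lesssim B^N_{\p\Om,\Nc}(u) + \norm{Lu}_{C^{\alpha}(\ol{\Om})}, \qquad u \in C^{4,\alpha}(\ol{\Om}).
\end{equation*}

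I would prove this auxiliary estimate by contradiction. Suppose it fails; then there exists a sequence $\{u_{j}\} \subset C^{4,\alpha}(\ol{\Om})$ with $\norm{u_{j}}_{H^{2}(\Om)} = 1$ and $B^N_{\p\Om,\Nc}(u_{j}) + \norm{Lu_{j}}_{C^{\alpha}(\ol{\Om})} \to 0$. Applying Lemma \ref{Lemma_est_1} to each $u_{j}$, together with $\norm{u_{j}}_{H^2(\Om)}=1$, shows that the sequence is uniformly bounded in $C^{4,\alpha}(\ol{\Om})$. By the Arzel\`a--Ascoli theorem, a subsequence (still denoted $u_{j}$) converges in $C^{4,\alpha'}(\ol{\Om})$ for every $\alpha' < \alpha$ (and hence in $H^{2}(\Om)$) to some $u \in C^{4,\alpha}(\ol{\Om})$. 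Passing to the limit in both the equation and the boundary traces, the limit satisfies
\begin{equation*}
Lu=0 \text{ in } \Om, \quad u|_{\p\Om} = \p_{\nu} u|_{\p\Om} = \Delta u|_{\p\Om} = \p_{\nu} \Delta u|_{\p\Om} = 0, \quad \norm{u}_{H^{2}(\Om)} = 1.
\end{equation*}

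The contradiction then comes from showing $u \equiv 0$. I would enlarge $\Om$ slightly to a smooth domain $\wt{\Om}$, extend $A$, $X$, $V$ to $\wt{\Om}$ preserving their regularity, and extend $u$ by zero to obtain $\wt{u}$ on $\wt{\Om}$. The vanishing of $u|_{\p\Om}$, $\p_{\nu} u|_{\p\Om}$ gives $\p_{\nu}^{j} u|_{\p\Om} = 0$ for $j=0,1$ directly, and combined with $\Delta u|_{\p\Om} = \p_{\nu}\Delta u|_{\p\Om} = 0$ and the boundary normal form of $\Delta$ one obtains $\p_{\nu}^{2} u|_{\p\Om} = \p_{\nu}^{3} u|_{\p\Om} = 0$, so $\wt{u} \in C^{3,\alpha'}(\wt{\Om})$ and a direct integration-by-parts against test functions, using the vanishing of all four Cauchy traces, shows that $L\wt{u} = 0$ holds distributionally on $\wt{\Om}$. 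Since $\wt{u}$ vanishes on the nonempty open set $\wt{\Om} \setminus \ol{\Om}$, the weak unique continuation property for the fourth-order elliptic operator $L = \Delta^{2} + A\Delta + X \cdot \nabla + V$ forces $\wt{u} \equiv 0$ on $\wt{\Om}$, hence $u \equiv 0$ on $\Om$, contradicting $\norm{u}_{H^{2}(\Om)} = 1$.

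The main obstacle is justifying the weak unique continuation step for the perturbed biharmonic operator with merely H\"older lower-order coefficients. This is classical for operators whose principal part is a power of the Laplacian under modest regularity of the perturbations, so I would invoke an appropriate Carleman-estimate or Protter-type result from the literature on unique continuation for polyharmonic operators rather than re-derive it here.
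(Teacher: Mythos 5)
Your proposal is correct and takes essentially the same route as the paper: a contradiction--compactness argument with the normalization $\norm{u_m}_{H^2(\Om)}=1$, using Lemma \ref{Lemma_est_1} to obtain a uniform $C^{4,\alpha}(\ol{\Om})$ bound, extracting a convergent subsequence, and eliminating the limit (which satisfies $Lu=0$ with vanishing traces $u,\p_{\nu}u,\Delta u,\p_{\nu}\Delta u$ on $\p\Om$ but $\norm{u}_{H^2(\Om)}=1$) by unique continuation. Your reformulation via the auxiliary $H^2$ estimate and the added detail on deducing $\p_{\nu}^2u=\p_{\nu}^3u=0$ before invoking unique continuation are only presentational refinements of the paper's proof.
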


\begin{proof}
	We argue by contradiction, thus suppose that for any $m$ there is $u_m\in C^{4,\alpha}(\ol{\Om})$ such that
	\begin{equation}\label{eq_contradiction}
		\norm{u_m}_{C^{4,\alpha}(\ol{\Om})} > m \left(B_{\p\Om,\mathcal{N}}^N(u_m)+ \norm{Lu_m}_{C^{\alpha}(\ol{\Om})}  \right).
	\end{equation}
	By Lemma \ref{Lemma_est_1} we have $ 	\norm{u_m}_{C^{4,\alpha}(\ol{\Om})}\le C \left(\norm{u_m}_{C^{4,\alpha}(\p\Om)} + \norm{Lu_m}_{C^{\alpha}(\ol{\Om})} + \norm{u_m}_{H^2(\Om)} \right)$, and if we scale $u_m$ so that $\norm{u_m}_{H^2(\Om)}=1$ we obtain $\norm{u_m}_{C^{4,\alpha}(\ol{\Om})}\le C \left(\norm{u_m}_{C^{4,\alpha}(\p\Om)} + \norm{Lu_m}_{C^{\alpha}(\ol{\Om})} + 1 \right).$
A 	combination of  this with \eqref{eq_contradiction} entails
	\begin{align}\label{eq_uni_bound}
		\norm{u_m}_{C^{4,\alpha}(\ol{\Om})}&\le C\left( B_{\p\Om,\mathcal{N}}^N(u_m) + \norm{Lu_m}_{C^{\alpha}(\ol{\Om})} + 1 \right)\le C\left(\frac{1}{m}\norm{u_m}_{C^{4,\alpha}(\ol{\Om})} + 1 \right),
	\end{align}
	and hence the sequence $u_m$ is uniformly bounded in $C^{4,\alpha}(\ol{\Om})$ for $m$ sufficiently large.
	
	By \cite{Adams_sobolev}*{Theorem 1.34} there is a compact embedding from $C^{4,\alpha}(\ol{\Om})$ to $C^{4}(\ol{\Om})$ and thus there is converging subsequence of $u_m$, denoted by $u_m$, such that $u_m\to u\in C^{4}(\ol{\Om})$. For this subsequence we also get from \eqref{eq_contradiction}, \eqref{eq_uni_bound} that
	\begin{equation*}
		Lu_m\to 0,\quad B_{\p\Om,\mathcal{N}}(u_m)\to 0
	\end{equation*}
	in the corresponding spaces. This implies that $B_{\p\Om,\mathcal{N}}(u)=0$ and $Lu=0$ by the uniqueness of limits. Now by unique continuation we must have $u=0$. But this is a contradiction since $\norm{u}_{H^2(\Om)}=\lim_{m\to\infty}\norm{u_m}_{H^2(\Om)}=1$.
\end{proof}

\begin{lemma}\label{lemma_cauchy_data_est}
	Let $Q \in C^k(\Rb,C^{4,\alpha}(\ol{\Omega}), C^{4,\alpha}(\ol{\Omega};\Rnn),C^{4,\alpha}(\ol{\Omega}))$ and $u_1\in C^{4,\alpha}(\ol{\Omega})$ satisfy $ \Delta^2 u_1 +  Q(x,u_1 ,\nabla u_1, \Delta u_1)=0 $ in $\Om$. If $u_2\in C^{4,\alpha}(\ol{\Omega})$  is any other solution of $\Delta^2 u +  Q(x,u ,\nabla u, \Delta u)=0$ in $\Om$ and $\norm{u_i}_{C^{4,\alpha}(\ol{\Omega})}\le M$ for $i=1,2$, then 
	\begin{align}
		\norm{u_1-u_2}_{C^{4,\alpha}(\ol{\Omega})} \le C(M,Q) \cdot B_{\p\Om,\mathcal{N}}^N(u_1-u_2).
	\end{align}
\end{lemma}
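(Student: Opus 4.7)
The plan is to reduce the statement to Lemma \ref{Lemma_est_2} by showing that the difference $w := u_1 - u_2$ satisfies a linear biharmonic equation of the type treated there. First I would apply the fundamental theorem of calculus to the path $t \mapsto Q(x, u_2 + tw, \nabla u_2 + t\nabla w, \Delta u_2 + t\Delta w)$, writing
\begin{equation*}
Q(x, u_1, \nabla u_1, \Delta u_1) - Q(x, u_2, \nabla u_2, \Delta u_2) = \tilde{V} w + \tilde{X} \cdot \nabla w + \tilde{A}\, \Delta w,
\end{equation*}
where, for example,
\begin{equation*}
\tilde{A}(x) := \int_0^1 \p_q Q\bigl(x, u_2 + tw, \nabla u_2 + t\nabla w, \Delta u_2 + t\Delta w\bigr)\,\D t,
\end{equation*}
and $\tilde{X}, \tilde{V}$ are defined analogously with $\nabla_p Q$ and $\p_u Q$ in place of $\p_q Q$. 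Subtracting the equations satisfied by $u_1$ and $u_2$, the function $w$ then solves
\begin{equation*}
\Delta^2 w + \tilde{A}\, \Delta w + \tilde{X} \cdot \nabla w + \tilde{V}\, w = 0 \quad \text{in } \Om.
\end{equation*}

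The next step is to note that since $Q \in C^k$ with $k \geq 3$ and $\|u_i\|_{C^{4,\alpha}(\ol{\Om})} \le M$, the coefficients $\tilde{A}, \tilde{V} \in C^{\alpha}(\ol{\Om})$ and $\tilde{X} \in C^{\alpha}(\ol{\Om}; \Rnn)$ have $C^{\alpha}$ norms bounded by a constant $C(M, Q)$. I would then apply Lemma \ref{Lemma_est_2} to $w$ with $L = \Delta^2 + \tilde{A}\Delta + \tilde{X}\cdot\nabla + \tilde{V}$, which, together with $Lw = 0$, gives
\begin{equation*}
\norm{w}_{C^{4,\alpha}(\ol{\Om})} \lesssim B^N_{\p\Om,\mathcal{N}}(w) + \norm{Lw}_{C^{\alpha}(\ol{\Om})} = B^N_{\p\Om,\mathcal{N}}(w),
\end{equation*}
which is the desired bound.

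The main obstacle is that Lemma \ref{Lemma_est_2} as stated yields a constant that depends on the specific operator $L$, whereas here I need uniformity over the whole family of operators arising from $(u_1, u_2)$ with $\|u_i\|_{C^{4,\alpha}} \le M$. To obtain uniformity, I would revisit the contradiction argument in the proof of Lemma \ref{Lemma_est_2}: suppose the inequality fails for constants $m \to \infty$, producing sequences $u_1^m, u_2^m$ (with corresponding coefficients $\tilde{A}_m, \tilde{X}_m, \tilde{V}_m$) and $w_m = u_1^m - u_2^m$ normalized to $\|w_m\|_{H^2(\Om)} = 1$. Since the $u_i^m$ and hence the coefficients are uniformly bounded in $C^{\alpha}$, Arzelà–Ascoli gives subsequences converging in, say, $C(\ol{\Om})$ to limits $\tilde{A}_\infty, \tilde{X}_\infty, \tilde{V}_\infty$, while $w_m \to w_\infty$ in $C^4(\ol{\Om})$ by the compact embedding $C^{4,\alpha} \hookrightarrow C^4$. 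The limit $w_\infty$ solves $\Delta^2 w_\infty + \tilde{A}_\infty \Delta w_\infty + \tilde{X}_\infty \cdot \nabla w_\infty + \tilde{V}_\infty w_\infty = 0$ with vanishing Cauchy data $B_{\p\Om,\mathcal{N}}(w_\infty) = 0$, hence $w_\infty \equiv 0$ by unique continuation for fourth-order elliptic operators, contradicting $\|w_\infty\|_{H^2(\Om)} = 1$. This yields the constant $C(M, Q)$ as required.
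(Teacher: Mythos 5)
Your proof shares the first step with the paper (the fundamental-theorem-of-calculus decomposition turning $w=u_1-u_2$ into a solution of a linear fourth-order equation with $C^\alpha$ coefficients bounded by $C(M,Q)$ via Lemma \ref{Q_deriv_estimate}), but after that it takes a genuinely different route. The paper never invokes Lemma \ref{Lemma_est_2}: it applies Lemma \ref{Lemma_est_1} with the fixed operator $\Delta^2$, estimates $\norm{\Delta^2 w}_{C^\alpha}$ using the Taylor expansion, absorbs the lower-order H\"older norms into $L^2$ norms via Sobolev embedding and interpolation, and then — crucially — controls $\norm{w}_{H^2(\Om)}$ by boundary data through a \emph{Carleman estimate} with explicit $\tau$-dependence. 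That quantitative step is precisely what you replace by a soft compactness/unique-continuation argument over the whole family of operators.

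Your compactness argument is plausible in spirit but as written has a gap exactly where you wave your hands: the step "$w_m \to w_\infty$ in $C^4(\ol{\Om})$ by the compact embedding" requires a uniform $C^{4,\alpha}$ bound on the normalized $w_m$, and this does not follow from Lemma \ref{Lemma_est_1} or Lemma \ref{Lemma_est_2} as stated, since the constants there are produced by the open mapping theorem and depend on the specific operator $L$ — whereas here $L_m=\Delta^2+\tilde A_m\Delta+\tilde X_m\cdot\nabla+\tilde V_m$ varies with $m$. The fix is to apply Lemma \ref{Lemma_est_1} with the fixed operator $\Delta^2$, bound $\norm{\Delta^2 w_m}_{C^\alpha}\le C(M,Q)\norm{w_m}_{C^{2,\alpha}}$ via the Taylor expansion, and then absorb using an interpolation inequality of the form $\norm{w_m}_{C^{2,\alpha}}\le\epsilon\norm{w_m}_{C^{4,\alpha}}+C_\epsilon\norm{w_m}_{H^2}$; this is exactly what the paper does before the Carleman step, and it yields the uniform bound you need. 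You should also note that the final unique-continuation step requires UCP for $\Delta^2$ plus lower-order terms with merely $C^\alpha$ (or bounded) coefficients, which holds but deserves a citation. With those two points addressed, your argument closes, though it is qualitative, whereas the paper's Carleman route is quantitative.
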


\begin{proof}
	Denote $v=u_1-u_2$. Then from Lemma \ref{Lemma_est_1} we get
	\begin{align}\label{lem_cauchy_data_est_1}
		\norm{v}_{C^{4,\alpha}(\ol{\Om})}&\le C \left(\norm{v}_{C^{4,\alpha}(\p\Omega)}+ \norm{Lv}_{C^{\alpha}(\ol{\Omega})} + \norm{v}_{H^2(\Omega)} \right)\\\notag
		&\leq C \left( \norm{v}_{C^{4,\alpha}(\p\Omega)}+ \norm{\Delta^2v}_{C^{\alpha}(\ol{\Omega})} + \norm{v}_{H^2(\Omega)} \right).
	\end{align}
	Hence we still need to estimate $\norm{\Delta^2v}_{C^{\alpha}(\ol{\Om})}$ and $\norm{v}_{H^2(\Omega)}$. Now if we denote $v_t=tu_2+(1-t)u_1$, then
	\begin{align}\label{taylor_expansion}\notag
		-\Delta^2v &= Q(x,u_2,\nabla u_2,\Delta u_2) - Q(x,u_1,\nabla u_1,\Delta u_1)=\int_0^1\frac{d}{dt}Q(x,v_t,\nabla v_t, \Delta v_t)\, \D t\\
		&=\int_0^1\p_uQ(x,v_t,\nabla v_t, \Delta v_t)v + \nabla_pQ(x,v_t,\nabla v_t, \Delta v_t)\cdot\nabla v + \p_qQ(x,v_t,\nabla v_t, \Delta v_t)\Delta v \, \D t.
	\end{align}
	Thus we conclude from Lemma \ref{Q_deriv_estimate}
	\begin{align*}\label{est_bilaplace}
		\norm{\Delta^2v}_{C^{\alpha}(\ol{\Om})}\leq C(M,Q)\left(\norm{v}_{C^{\alpha}(\ol{\Om})} + \norm{\nabla v}_{C^{\alpha}(\ol{\Om})} + \norm{\Delta v}_{C^{\alpha}(\ol{\Om})} \right).
	\end{align*}
	Notice that here we use the assumption $\norm{u_i}_{C^{4,\alpha}(\ol{\Omega})}\le M$.
	Applying Sobolev embeddings, interpolation of $L^p$-spaces and Young's inequality with $\e$ (as in the proof of \cite{JNS2023}*{Lemma 3.3}; the steps are the same) we get
	\begin{equation*}
		\norm{v}_{C^{\alpha}(\ol{\Om})}\leq C \norm{v}_{L^2(\Om)}, \quad \norm{\nabla v}_{C^{\alpha}(\ol{\Om})}\leq C \norm{\nabla v}_{L^2(\Om)}, \quad \norm{\Delta v}_{C^{\alpha}(\ol{\Om})}\leq C \norm{\Delta v}_{L^2(\Om)}.
	\end{equation*}
	Combining this with \eqref{lem_cauchy_data_est_1} implies
	\begin{equation}\label{lem_cauchy_data_est_2}
		\norm{v}_{C^{4,\alpha}(\ol{\Om})}\leq C \left( \norm{v}_{C^{4,\alpha}(\p\Omega)} + \norm{v}_{H^2(\Omega)} \right).
	\end{equation}
	To estimate $\norm{v}_{H^2(\Omega)}$ we use a Carleman estimate (see e.g. \cite{ChoulliPDEA2021}*{Theorem 4.1.}): there are $C, \tau_0 > 0$ and $\varphi \in C^{\infty}(\ol{\Om})$ such that when $\tau \geq \tau_0$, one has 
	\begin{equation}\label{carleman_est}
		\norm{e^{\tau \varphi} u}_{L^2(\Om)} + \frac{1}{\tau} \norm{e^{\tau \varphi} \nabla u}_{L^2(\Om)}\leq \frac{C}{\tau^{3/2}} \norm{e^{\tau \varphi} \Delta u}_{L^2(\Om)} + C \norm{e^{\tau \varphi} u}_{L^2(\p \Om)}+ \frac{C}{\tau} \norm{e^{\tau \varphi} \nabla u}_{L^2(\p \Om)}
	\end{equation}
	for any $v \in C^2(\ol{\Om})$. Using only
	\begin{equation*}
		\norm{e^{\tau \varphi} u}_{L^2(\Om)} \leq \frac{C}{\tau^{3/2}} \norm{e^{\tau \varphi} \Delta u}_{L^2(\Om)} + C \norm{e^{\tau \varphi} u}_{L^2(\p \Om)}+ \frac{C}{\tau} \norm{e^{\tau \varphi} \nabla u}_{L^2(\p \Om)}
	\end{equation*}
	to estimate $\frac{1}{\tau^{3/2}}\norm{e^{\tau\varphi}\Delta v}_{L^2(\Om)}$ and combining with \eqref{carleman_est} we have
	\begin{align*}
		&\norm{e^{\tau \varphi} v}_{L^2(\Om)} + \frac{1}{\tau} \norm{e^{\tau \varphi} \nabla v}_{L^2(\Om)} + \frac{1}{\tau^{3/2}}\norm{e^{\tau \varphi} \Delta v}_{L^2(\Om)} \\
		&\leq \frac{C}{\tau^3}\norm{e^{\tau \varphi} \Delta^2v}_{L^2(\Om)} + C\norm{e^{\tau \varphi} v}_{L^2(\p\Om)} + \frac{C}{\tau}\norm{e^{\tau \varphi} \nabla v}_{L^2(\p\Om)} + \frac{C}{\tau^{3/2}}\norm{e^{\tau \varphi} \Delta v}_{L^2(\p\Om)} \\
		&+ \frac{C}{\tau^{5/2}}\norm{e^{\tau \varphi} \nabla (\Delta v)}_{L^2(\p\Om)}.
	\end{align*}
	Using the observation \eqref{taylor_expansion} and choosing $\tau=\tau(M,Q)$ large but fixed gives
	\begin{align*}
		&\tilde{C}\left(\norm{e^{\tau \varphi} v}_{L^2(\Om)} + \norm{e^{\tau \varphi} \nabla v}_{L^2(\Om)} + \norm{e^{\tau \varphi} \Delta v}_{L^2(\Om)} \right)\\
		&\leq  C\left(\norm{e^{\tau \varphi} v}_{L^2(\p\Om)} + \norm{e^{\tau \varphi} \nabla v}_{L^2(\p\Om)} + \norm{e^{\tau \varphi} \Delta v}_{L^2(\p\Om)} + \norm{e^{\tau \varphi} \nabla (\Delta v)}_{L^2(\p\Om)}\right).
	\end{align*}
	Then $c(M,Q)\leq e^{\tau\varphi} \leq C(M,Q)$ implies $	\norm{v}_{H^2(\Om)}\leq C(M,Q)\norm{v}_{H^3(\p\Om)} $. 
This  together with \eqref{lem_cauchy_data_est_2} proves the claim.
\end{proof}

\section{Controlling Cauchy data}\label{sec:controlling_Cauchy_data}

In Section \ref{sec:smooth_solution_map}, we have  constructed a smooth solution map for the equation, $\Delta^2 u + Q(x, u, \nabla u, \Delta u) = 0$ in $\Omega,$ however that is not enough to proceed further. Therefore the goal of this section is construct a smooth solution map $T_{Q_2,w_2}$ for the nonlinear equation $ \Delta^2+ Q_2(x,u, \nabla u, \Delta u)=0$ in $\Om$ parameterized on solutions $v$ of 
\[
\Delta^2 v + A_1\Delta v + X_1\cdot \nabla v + V_1 v=0 \quad
\mbox{in $\Om$},
\]
which will be particularly useful when proving the main results.
There are two primary reasons for constructing such a map:
\begin{itemize}
	\item[1.] Suppose \( u_1 = S_{Q_1, w_1}(v_1) \). To proceed we need to construct a solution $u_2$ for the nonlinear equation with $Q_2$ in such a way that it has the same Cauchy data as $u_1$ and depends smoothly on the solution $v_1$. That is on the solutions of the linearized equation for $Q_1$. The inclusion of Cauchy data gives the first requirement, but the second is not guaranteed by the first solution operator constructed in Section \ref{sec:smooth_solution_map}. 
	
	\item[2.] Another challenge arises when identifying the first-order derivatives, namely 
	\[
	\partial_u Q(x, w_i, \nabla w_i, \Delta w_i), \quad
	\nabla_p Q(x, w_i, \nabla w_i, \Delta w_i), \quad
	\partial_q Q(x, w_i, \nabla w_i, \Delta w_i),
	\]
	for \( i = 1, 2 \). This identification relies on the linearization method, which requires differentiating the solution maps \( S_{Q_i, w_i} \) in the same direction \( v \). However, to use the same parameter \( v \) for both solution operators \( S_{Q_i, w_i} \), \( v \) must satisfy 
	\[
	(\Delta^2 + A_i \Delta + X_i \cdot \nabla + V_i) v = 0 \quad \text{in } \Omega,
	\]
	for \( i = 1, 2 \). This condition is not guaranteed prior to identifying the first-order derivatives.
\end{itemize}

To this end, we define the following function spaces:
\begin{equation*}
	\begin{aligned}
		Y&:= \{ u\in C^{4,\alpha} (\overline{\Omega}): u=\p_{\nu}u=\Delta u=\p_{\nu} \Delta u =0 \,\, \mbox{on $\p\Omega$} \},\\
		Z&:=\{ (\Delta^2+ A \Delta + X\cdot \nabla +V)u: u\in Y\}.
	\end{aligned}
\end{equation*}
\begin{lemma}
	Then the function spaces $Y$ and $Z$ are Banach spaces.
\end{lemma}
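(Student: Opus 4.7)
The plan is to realize both $Y$ and $Z$ as closed subspaces of known Banach spaces, namely $C^{4,\alpha}(\overline{\Omega})$ and $C^{\alpha}(\overline{\Omega})$ respectively, so that they inherit a complete norm.

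For $Y$, I would note that the four boundary trace operators
\[
\gamma_0: u \mapsto u|_{\p\Om}, \quad \gamma_1: u \mapsto \p_{\nu}u|_{\p\Om}, \quad \gamma_2: u \mapsto \Delta u|_{\p\Om}, \quad \gamma_3: u \mapsto \p_{\nu}\Delta u|_{\p\Om}
\]
are bounded linear maps from $C^{4,\alpha}(\overline{\Omega})$ into $C^{4,\alpha}(\p\Om)$, $C^{3,\alpha}(\p\Om)$, $C^{2,\alpha}(\p\Om)$, $C^{1,\alpha}(\p\Om)$ respectively. Hence $Y=\bigcap_{j=0}^{3}\ker\gamma_j$ is closed in the Banach space $C^{4,\alpha}(\overline{\Omega})$ and inherits the norm. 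This part is essentially immediate.

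For $Z$ the natural norm is the one induced from $C^{\alpha}(\overline{\Omega})$, and I would show that $Z$ is closed in this ambient space. Suppose $F_m = L u_m \in Z$ with $u_m \in Y$ and $F_m \to F$ in $C^{\alpha}(\overline{\Omega})$. Since each $u_m \in Y$ satisfies in particular the Navier boundary condition $(u_m,\Delta u_m)|_{\p\Om}=(0,0)$, Lemma \ref{lem_regularity} gives $u_m = G_{A,X,V}(F_m,0,0)$ together with the estimate
\[
\norm{u_m - u_k}_{C^{4,\alpha}(\overline{\Omega})} \lesssim \norm{F_m - F_k}_{C^{\alpha}(\overline{\Omega})}.
\]
Thus $\{u_m\}$ is Cauchy in $C^{4,\alpha}(\overline{\Omega})$ and converges to some $u \in C^{4,\alpha}(\overline{\Omega})$. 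Continuity of $L\colon C^{4,\alpha}(\overline{\Omega}) \to C^{\alpha}(\overline{\Omega})$ gives $Lu=F$, and continuity of each $\gamma_j$ together with $\gamma_j(u_m)=0$ for all $m$ forces $\gamma_j(u)=0$ for $j=0,1,2,3$. Therefore $u\in Y$ and $F=Lu\in Z$, proving closedness.

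The only subtle point, and the one I would flag as the main item, is that Lemma \ref{lem_regularity} only controls the two Navier traces $(u,\Delta u)|_{\p\Om}$, so the vanishing of the remaining traces $\p_\nu u|_{\p\Om}$ and $\p_\nu \Delta u|_{\p\Om}$ for the limit $u$ must be obtained independently; this is exactly what the continuity of $\gamma_1$ and $\gamma_3$ on $C^{4,\alpha}(\overline{\Omega})$ delivers. Everything else is bookkeeping.
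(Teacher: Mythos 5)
Your proof is correct and has the same overall structure as the paper's: realize $Y$ as a closed subspace of $C^{4,\alpha}(\overline{\Omega})$ via trace operators, and for $Z$ show that a $C^{\alpha}$-convergent sequence $Lu_m$ forces the preimages $u_m$ to be $C^{4,\alpha}$-Cauchy, pass to the limit, and check it stays in $Y$.

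The one genuine difference is which estimate drives the Cauchy step. You invoke Lemma \ref{lem_regularity}: since $u_m\in Y$ satisfies the Navier boundary condition, uniqueness and linearity of the solution operator give $u_m=G_{A,X,V}(F_m,0,0)$ and hence $\norm{u_m-u_k}_{C^{4,\alpha}}\lesssim\norm{F_m-F_k}_{C^{\alpha}}$. The paper instead appeals to Lemma \ref{Lemma_est_2}, the Cauchy-data stability estimate $\norm{v}_{C^{4,\alpha}}\lesssim B^N_{\p\Om,\mathcal N}(v)+\norm{Lv}_{C^{\alpha}}$, which collapses to $\norm{v_n-v_m}_{C^{4,\alpha}}\lesssim\norm{L(v_n-v_m)}_{C^{\alpha}}$ because the entire Cauchy data of $v_n-v_m\in Y$ vanishes. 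The two routes reach the same bound, but they rest on slightly different hypotheses: your argument requires well-posedness of the Navier boundary value problem (which the paper does assume in this section), whereas Lemma \ref{Lemma_est_2} is obtained by compactness plus unique continuation and does not use Navier well-posedness at all -- it only needs that a $C^{4,\alpha}$ function with vanishing full Cauchy data and $Lv=0$ is identically zero. So the paper's choice is the more robust one (it survives if $0$ were a Navier eigenvalue), while yours is marginally more direct when Lemma \ref{lem_regularity} is available. Your closing remark correctly flags that Lemma \ref{lem_regularity} controls only two of the four traces of the limit, and that the other two must come from continuity of the trace maps; the paper sidesteps this issue since it never passes through the solution operator $G$.
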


\begin{proof}
	It is clear that $Y$ are Banach space, since it is closed. One can also show that $Z$ is a Banach space. Let $u_n=Lv_n\in Z$ be such that $u_n\to u\in C^{\alpha}(\ol{\Om})$. Since $B_{\p\Om,\mathcal{N}}(u_n)=0$ for all $n$, we have by Lemma \ref{Lemma_est_2} that $	\norm{v_n-v_m}_{C^{4,\alpha}(\ol{\Om})}\leq C\norm{Lv_n-Lv_m}_{C^{\alpha}(\ol{\Om})}.$
	Hence $v_n$ is a Cauchy sequence in $Y$ which implies the existence of $v\in Y$ so that $v_n\to v$ in $C^{4,\alpha}(\ol{\Om})$. Now $	\norm{u_n-Lv}_{C^{\alpha}(\ol{\Om})} = \norm{L(v_n-v)}_{C^{\alpha}(\ol{\Om})} \leq C\norm{v_n-v}_{C^{4,\alpha}(\ol{\Om})}.$
	Thus $u_n\to u=Lv$ in $C^{\alpha}(\ol{\Om})$ which says that $Z$ is a Banach space.
\end{proof}

The following lemma demonstrates the existence of a bounded projection operator $P: C^{4,\alpha}(\ol{\Om})\rightarrow C^{4,\alpha}(\ol{\Omega})\cap Z$. For this, recall from Section \ref{sec:estimating_Cauchy_data} the operator $L\colon C^{4,\alpha}(\Om) \to C^{\alpha}(\Om),$
\begin{equation*}
	Lu=(\Delta^2+ A(x)\Delta + X(x)\cdot \nabla + V(x))u.
\end{equation*}
We define the formal adjoint of $L$ to be $L^*\colon C^{4,\alpha}(\Om) \to C^{\alpha}(\Om),$
\begin{equation*}
	L^*u=\Delta^2u+ \Delta(A(x)u) - \nabla\cdot(Xu) + V(x)u.
\end{equation*}

\begin{lemma}\label{lemma_projection}
	Let $Y$ and $Z$ be as above. There is a projection operator $P : C^{4,\alpha}(\ol{\Omega})\rightarrow C^{4,\alpha}(\ol{\Omega})\cap Z$ given by  $P(u) = Ly,$
	where $y\in C^{8,\alpha}(\ol{\Om})$ is the unique solution of 
	\begin{equation}\label{projection_bvp}
		\begin{cases}
			L^*(Ly) =   L^*u & \quad\text{in }\Om \\
			y=\p_{\nu}y=\Delta y=\p_{\nu}\Delta y=0&\quad\text{on }\p\Om.
		\end{cases}
	\end{equation}
\end{lemma}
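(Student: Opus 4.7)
The plan is to establish well-posedness of the order $8$ boundary value problem \eqref{projection_bvp} in $C^{8,\alpha}(\ol{\Om})$, so that $P(u) := Ly$ defines a bounded map into $C^{4,\alpha}(\ol{\Om}) \cap Z$, and then to read off the projection identity $P^2 = P$ from the uniqueness portion of that well-posedness.

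For well-posedness, I would work variationally. First, the four conditions defining $Y$ are in fact equivalent to $y = \p_{\nu} y = \p_{\nu}^2 y = \p_{\nu}^3 y = 0$ on $\p\Om$: once $y = \p_{\nu} y = 0$, all tangential derivatives at the boundary vanish, so $\Delta y$ reduces to $\p_{\nu}^2 y$ there, and similarly for $\p_{\nu}\Delta y$. Thus $Y \subset H^4_0(\Om)$, and the natural weak formulation is to find $y \in H^4_0(\Om)$ with
\[
\int_\Om Ly \, Lz \, \D x = \int_\Om u \, Lz \, \D x \qquad \text{for all } z \in H^4_0(\Om),
\]
which is the weak form of $L^*Ly = L^*u$ with the required boundary data. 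The bilinear form is continuous, and since $\|\Delta^2 y\|_{L^2} \simeq \|y\|_{H^4}$ on $H^4_0$, it is coercive up to a compact perturbation (absorbing the lower order terms of $Ly$ by Peter-Paul), so $L^*L$ is Fredholm on $H^4_0$. Uniqueness closes the argument: if $L^*Ly = 0$ with $y \in Y$, testing against $y$ and integrating by parts using the vanishing Cauchy data of $y$ gives $\|Ly\|_{L^2(\Om)} = 0$; Lemma \ref{Lemma_est_2} applied to $y$ (which has zero boundary measurements and satisfies $Ly = 0$) then forces $y \equiv 0$. Standard Schauder estimates for the order $8$ elliptic BVP upgrade the weak solution to $y \in C^{8,\alpha}(\ol{\Om})$ with $\|y\|_{C^{8,\alpha}(\ol{\Om})} \lesssim \|L^*u\|_{C^{\alpha}(\ol{\Om})} \lesssim \|u\|_{C^{4,\alpha}(\ol{\Om})}$.

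The properties of $P$ then drop out. Since $y \in Y$, we have $P(u) = Ly \in L(Y) = Z$; since $y \in C^{8,\alpha}(\ol{\Om})$, also $Ly \in C^{4,\alpha}(\ol{\Om})$, with $\|P(u)\|_{C^{4,\alpha}(\ol{\Om})} \lesssim \|y\|_{C^{8,\alpha}(\ol{\Om})} \lesssim \|u\|_{C^{4,\alpha}(\ol{\Om})}$, giving boundedness into $C^{4,\alpha}(\ol{\Om}) \cap Z$. For $P^2 = P$: if $y_1 \in Y$ realizes $P(u)$ and $y_2 \in Y$ realizes $P(Ly_1)$, then $y_2 - y_1 \in Y$ and $L^*L(y_2 - y_1) = L^*(Ly_1) - L^*(Ly_1) = 0$, so the uniqueness above yields $y_2 = y_1$ and $P(P(u)) = Ly_2 = Ly_1 = P(u)$. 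The most delicate step is the $C^{8,\alpha}$ regularity upgrade, which requires the Shapiro-Lopatinskii complementing condition for the principal part $\Delta^4$ of $L^*L$ paired with the boundary operators $\{I, \p_{\nu}, \Delta, \p_{\nu}\Delta\}$; in frozen local coordinates at $\p\Om$, evaluating these four operators on the four decaying fundamental solutions $e^{-|\xi'|x_n}, x_n e^{-|\xi'|x_n}, x_n^2 e^{-|\xi'|x_n}, x_n^3 e^{-|\xi'|x_n}$ of $(|\xi'|^2 - \p_{x_n}^2)^4 v = 0$ produces a lower triangular matrix with nonzero diagonal entries, so the condition holds uniformly for $\xi' \ne 0$.
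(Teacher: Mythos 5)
Your proof is correct and follows essentially the same route as the paper: variational existence in $H^4_0(\Om)$ (Fredholm plus uniqueness), uniqueness from the integration by parts identity $\int_\Om (Ly)^2\,\D x = 0$ together with the unique continuation statement (you close this explicitly via Lemma \ref{Lemma_est_2}, which the paper leaves implicit), elliptic regularity to upgrade to $C^{8,\alpha}(\ol{\Om})$, and the projection identity $P^2 = P$ again from uniqueness. Your observations that the four Navier conditions on $Y$ are equivalent to $y = \p_\nu y = \p_\nu^2 y = \p_\nu^3 y = 0$ (so $Y \subset H^4_0$), and that the Shapiro--Lopatinskii condition holds for $\Delta^4$ with the boundary operators $\{I, \p_\nu, \Delta, \p_\nu\Delta\}$, are useful details that the paper glosses over but do not change the argument.
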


\begin{proof}
	Let $y_1, y_2$ be two solutions for the above boundary value problem and denote $y=y_1-y_2$. Then $L^*(Ly)=0$ and
	\begin{align*}
		0&=\int_{\Om}L^*(Ly)\cdot y\,dx=\int_{\Om}(Ly)^2\,dx
	\end{align*}
	since $y$ satisfies the boundary conditions in \eqref{projection_bvp}. Thus $y_1=y_2$ and solutions to \eqref{projection_bvp} are unique.
	
	Existence of solutions can be seen by using standard arguments as follows. Define a coercive bilinear form $B(u,v)=(Lu,L^*v)_{L^2(\Om)} + \gamma_1(\Delta u,\Delta v)_{L^2(\Om)} + \gamma_2(\nabla u,\nabla v)_{L^2(\Om)} + \gamma_3(u,v)_{L^2(\Om)}$, $u,v\in Y_2,$ for $\gamma_j>0$, $j=1,2,3$, sufficiently large depending on the coefficients of $L$. Using the Riesz representation theorem gives existence of weak solutions in $H^4_0(\Om)$ for $L^*(Ly)+\gamma y = F\in H^{-4}(\Om)$. Now the uniqueness argument above gives that one can solve the equation $L^*(Ly)=F$ uniquely even though Fredholm theory tells us that there is a countable set of eigenvalues where uniqueness of solutions could fail. Then by elliptic regularity for $u\in C^{4,\alpha}(\ol{\Om})$ one has $y\in C^{8,\alpha}(\ol{\Om})$.
	
	We are left to prove that $P$, as defined above, is a projection. Let $u \in C^{4,\alpha}(\ol{\Om})$ and $y \in C^{8,\alpha}(\ol{\Om})$ be the unique solution to \eqref{projection_bvp}. Now
	\begin{equation*}
		P(Pu)=P(Ly)=Lv,
	\end{equation*}
	where $v$ is the unique solution of
	\begin{equation*}
		\begin{cases}
			L^*(Lv) =   L^*(Ly) & \quad\text{in }\Om \\
			v=\p_{\nu}v=\Delta v=\p_{\nu}\Delta v=0&\quad\text{on }\p\Om.
		\end{cases}
	\end{equation*}
	This implies that $w=v-y$ solves
	\begin{equation*}
		\begin{cases}
			L^*(Lw) =  0 & \quad\text{in }\Om \\
			w=\p_{\nu}w=\Delta w=\p_{\nu}\Delta w=0&\quad\text{on }\p\Om
		\end{cases}
	\end{equation*}
	and hence by unique continuation $v=y$. This implies $P(Pu)=Lv=Ly=P(u)$ and thus $P$ is a projection onto $C^{4,\alpha}(\ol{\Omega})\cap Z$.
\end{proof}

\begin{lemma}\label{lem_bdd_inv}
	Let $Y$ and $Z$ be as defined above. Then the operator \[\Delta^2+ A(x) \Delta + X\cdot \nabla +V: Y \rightarrow Z \] is bounded and bijective, and has a bounded inverse $G: Z \rightarrow Y$.
\end{lemma}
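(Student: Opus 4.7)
The plan is to verify the three properties---boundedness, bijectivity, and boundedness of the inverse---in turn, with the open mapping theorem doing the last step.

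\textbf{Boundedness.} Since $A,V\in C^{k,\alpha}(\overline{\Omega})$ and $X\in C^{k,\alpha}(\overline{\Omega};\mathbb{R}^n)$ with $k\ge 3$, the standard product rule for Hölder spaces yields $\|Lu\|_{C^{\alpha}(\overline{\Omega})}\lesssim \|u\|_{C^{4,\alpha}(\overline{\Omega})}$ for any $u\in C^{4,\alpha}(\overline{\Omega})$, so $L\colon Y\to Z$ is a bounded linear operator.

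\textbf{Surjectivity.} This is by construction: $Z$ was defined precisely as $\{Lu : u\in Y\}$, so every element of $Z$ has a preimage in $Y$ under $L$.

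\textbf{Injectivity.} Suppose $u\in Y$ and $Lu=0$ in $\Omega$. By the definition of $Y$, all four Cauchy traces $u|_{\partial\Omega}$, $\partial_{\nu}u|_{\partial\Omega}$, $\Delta u|_{\partial\Omega}$, $\partial_{\nu}\Delta u|_{\partial\Omega}$ vanish, so $B^{N}_{\partial\Omega,\mathcal{N}}(u)=0$. Applying Lemma \ref{Lemma_est_2} gives
\[
\|u\|_{C^{4,\alpha}(\overline{\Omega})}\lesssim B^{N}_{\partial\Omega,\mathcal{N}}(u)+\|Lu\|_{C^{\alpha}(\overline{\Omega})}=0,
\]
which forces $u=0$. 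Hence $L$ is injective.

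\textbf{Bounded inverse.} The previous lemma established that $Y$ is a Banach space (being a closed subspace of $C^{4,\alpha}(\overline{\Omega})$ cut out by continuous trace conditions) and that $Z$ is a Banach space. Therefore $L\colon Y\to Z$ is a bounded bijection between Banach spaces, and the open mapping theorem furnishes a bounded inverse $G\colon Z\to Y$. The main (and only nontrivial) obstacle is injectivity, but this is supplied cleanly by the a priori estimate of Lemma \ref{Lemma_est_2}, which already packages the unique continuation information needed.
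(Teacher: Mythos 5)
Your proof is correct, and it differs from the paper's in one small but pleasant way. For injectivity, the paper argues directly: if $u\in Y$ satisfies $Lu=0$, then $u$ has vanishing Cauchy data and solves the homogeneous equation, so the unique continuation principle gives $u=0$. You instead invoke the quantitative a priori estimate of Lemma \ref{Lemma_est_2}, $\|u\|_{C^{4,\alpha}(\overline{\Omega})}\lesssim B^{N}_{\partial\Omega,\mathcal{N}}(u)+\|Lu\|_{C^{\alpha}(\overline{\Omega})}$, whose right-hand side vanishes under the hypotheses, forcing $u=0$. Since Lemma \ref{Lemma_est_2} was itself proved via unique continuation, the two arguments are equivalent in substance, but yours is slightly more economical: it reuses an estimate that is already in hand rather than re-invoking unique continuation, and it makes explicit that the same a priori bound driving the Cauchy-data estimates also delivers the injectivity needed here. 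The remaining steps (boundedness of $L$ via the Hölder product rule, surjectivity by definition of $Z$, and the open mapping theorem for the bounded inverse, relying on the lemma that $Y$ and $Z$ are Banach) match the paper's.
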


\begin{proof}
	By definition of $Z$, we have that $ \Delta^2+ A(x) \Delta + X\cdot \nabla +V $ is surjective. To show the injectivity, we assume that there is $y\in Y$ and $\tilde{y}\in Y$ such that 
	\begin{align*}
		( \Delta^2+ A(x) \Delta + X\cdot \nabla +V) y =  ( \Delta^2+ A(x) \Delta + X\cdot \nabla +V) \tilde{y}.
	\end{align*}
	Then the function $u= y-\tilde{y}$ solves the following:
	\begin{equation}
		\begin{aligned}
			( \Delta^2+ A(x) \Delta + X\cdot \nabla +V) y&=0 \quad \mbox{in $\Om$},\\
			(u,\Delta u, \p_{\nu} u, \p_{\nu}\Delta u)&=0\quad\mbox{on $ \p\Om$}.
		\end{aligned}
	\end{equation}
	Therefore by unique continuation principle we conclude that $y =\tilde{y}$. This implies the operator $( \Delta^2+ A(x) \Delta + X\cdot \nabla +V) $ is injective. Moreover,  one has
	\begin{align*}
		& \norm{ ( \Delta^2+ A(x) \Delta + X\cdot \nabla +V)u}_{C^{\alpha}(\ol{\Omega})}\le C \norm{u}_{C^{4,\alpha}(\ol{\Omega})}.
	\end{align*}
	This says that $ \Delta^2+ A(x) \Delta + X\cdot \nabla +V$ is a bounded linear operator which is bijective. Hence, by open mapping theorem there exists a bounded inverse $G:Z\rightarrow Y$ of $\Delta^2+ A(x) \Delta + X\cdot \nabla +V$. This finishes the proof.  
\end{proof}

To proceed further, we now use the ball $N_{A,X,V,\delta}$ in the solution space:
\begin{align*}
	N_{A,X,V,\delta}= \{u \in C^{4,\alpha}(\ol{\Omega}) :  \left(\Delta^2+ A \Delta + X\cdot \nabla +V\right)u=0,\, \mbox{and}\, \norm{u}_{C^{4,\alpha}(\ol{\Omega}} \le \delta\}.
\end{align*}


\begin{lemma}\label{lem_solution_map_for_both}
	Let $Q_i\in C^k(\Rb,C^{4,\alpha}(\ol{\Omega}), C^{4,\alpha}(\ol{\Omega};\Rnn),C^{4,\alpha}(\ol{\Omega}))$ for $i=1,2$. Let $w_i\in C^{4,\alpha}(\ol{\Om})$ be a solution of $ \Delta^2 w_i+ Q_i(x, w_i, \nabla w_i, \Delta w_i)=0$ with the same Cauchy data, for $i=1,2$. Let us denote, for $i=1,2$,
	\begin{align}
		\begin{cases}
			A_i(x) = \p_qQ(x,w_i(x),\nabla w_i(x),\Delta w_i(x)),\\ X_i(x) = \nabla_pQ(x,w_i(x),\nabla w_i(x),\Delta w_i(x)), \\ V_i(x) = \p_u Q(x,w_i(x),\nabla w_i(x),\Delta w_i(x)).
		\end{cases}
	\end{align}
	Let $S_{Q_1}: N_{A_1,X_1,V_1,\delta_1} \rightarrow C^{4,\alpha}(\ol{\Om}) $ be the solution map from Lemma \ref{lemma_perturbed_sol} for some $\delta_1>0$. Suppose $u_{1,v}= S_{Q_1} (v)$, and $ C_{Q_1}^{w, \delta}  \subseteq C_{Q_2}^{0,C}$. Then there exists $\delta_2>0$ and a $C^k$ map 
	$$T_{Q_2}: N_{A_1,X_1,V_1,\delta_2}\rightarrow C^{4,\alpha}(\ol{\Om}) \quad\text{given by } \ T_{Q_2} (v)= u_{2,v},$$
	where $u_{1,v}$ and $u_{2,v}$ have the same Cauchy data, and $u_{2,v}$ solves
	$ \Delta^2u_{2,v}+ Q_2(x, u_{2,v}, \nabla u_{2,v}, \Delta u_{2,v})=0$. Moreover, when one has $A_1=A_2, X_1=X_2$, and $V_1=V_2$, then $ D T_{Q_2}(0)v=v$.
\end{lemma}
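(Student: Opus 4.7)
The plan is to exhibit $T_{Q_2}$ as a composition of the solution map $S_{Q_2,w_2}$ of Lemma \ref{lemma_perturbed_sol} for $(Q_2,w_2)$ with the linear operator that transports the Navier trace of $u_{1,v}-w_1$ to a solution of the $Q_2$-linearized equation at $w_2$. Explicitly, for $v\in N_{A_1,X_1,V_1,\delta_2}$ small I will set
\[
\tilde v(v) := G_{Q_2}\bigl(0,\,(u_{1,v}-w_1)|_{\p\Om},\,\Delta(u_{1,v}-w_1)|_{\p\Om}\bigr), \qquad T_{Q_2}(v) := S_{Q_2,w_2}\bigl(\tilde v(v)\bigr).
\]
By Lemma \ref{lem_regularity} applied to $G_{Q_2}$ together with the estimate $\|u_{1,v}-w_1\|_{C^{4,\alpha}(\ol{\Om})}=\|v+\Phi_1(v)\|_{C^{4,\alpha}(\ol{\Om})}\le C\|v\|_{C^{4,\alpha}(\ol{\Om})}$ coming from Lemma \ref{lemma_perturbed_sol}, $\|\tilde v(v)\|_{C^{4,\alpha}(\ol{\Om})}$ will be as small as needed once $\delta_2$ is small, so $\tilde v(v)$ lands in the domain of $S_{Q_2,w_2}$. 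Since $G_{Q_2}$ is linear, $v\mapsto u_{1,v}$ is $C^{k-1}$, and $S_{Q_2,w_2}$ is $C^{k-1}$, the composition $T_{Q_2}$ will be $C^{k-1}$.

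By construction $u_{2,v}:=T_{Q_2}(v)$ solves the nonlinear $Q_2$-equation. For the Navier data, since $\Phi_2(\tilde v)|_{\p\Om}=0$ and $\Delta\Phi_2(\tilde v)|_{\p\Om}=0$ by the fixed-point equation \eqref{eq:fixed_point_equation}, and since $w_1$ and $w_2$ share their full Cauchy data on $\p\Om$, a direct computation will give $u_{2,v}|_{\p\Om}=u_{1,v}|_{\p\Om}$ and $\Delta u_{2,v}|_{\p\Om}=\Delta u_{1,v}|_{\p\Om}$.

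The main obstacle, and the only place where the hypothesis $C_{Q_1}^{w,\delta}\subseteq C_{Q_2}^{0,C}$ enters, is matching the remaining Cauchy traces $\p_\nu u_{2,v}$ and $\p_\nu\Delta u_{2,v}$ with those of $u_{1,v}$. For this I would invoke the inclusion to produce a solution $\hat u$ of the $Q_2$-equation with $\|\hat u\|_{C^{4,\alpha}(\ol{\Om})}\le C$ and full Cauchy data equal to that of $u_{1,v}$, and then argue $\hat u=T_{Q_2}(v)$. The quantitative step is to apply Lemma \ref{lemma_cauchy_data_est} to the two bounded $Q_2$-solutions $\hat u$ and $w_2$:
\[
\|\hat u-w_2\|_{C^{4,\alpha}(\ol{\Om})} \;\le\; C(M,Q_2)\,B_{\p\Om,\mathcal N}^N(\hat u-w_2) \;=\; C(M,Q_2)\,B_{\p\Om,\mathcal N}^N(u_{1,v}-w_1)\;\lesssim\;\|v\|_{C^{4,\alpha}(\ol{\Om})},
\]
where the equality uses that $\hat u$ and $u_{1,v}$ share the full Cauchy data while $w_1$ and $w_2$ do as well, and $M=\max(C,\|w_2\|_{C^{4,\alpha}(\ol{\Om})})$. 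Shrinking $\delta_2$ once more, $\hat u$ will lie in the neighborhood where the converse part of Lemma \ref{lemma_perturbed_sol} applied to $(Q_2,w_2)$ identifies $\hat u=S_{Q_2,w_2}(\check v)$ with $\check v=G_{Q_2}(0,(\hat u-w_2)|_{\p\Om},\Delta(\hat u-w_2)|_{\p\Om})$. Since the Navier data of $\hat u$ agrees with that of $u_{1,v}$ and $w_1,w_2$ agree on $\p\Om$ including Laplacians, one gets $\check v=\tilde v(v)$, hence $\hat u=T_{Q_2}(v)$, so $T_{Q_2}(v)$ carries the full Cauchy data of $u_{1,v}$.

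For the derivative at $v=0$ under the additional hypothesis $A_1=A_2$, $X_1=X_2$, $V_1=V_2$, note that $\tilde v(0)=0$ since $u_{1,0}=w_1$, and $DS_{Q_1,w_1}(0)=\mathrm{Id}$ because $\Phi_1(0)=D\Phi_1(0)=0$. The chain rule then yields $D\tilde v(0)h=G_{Q_2}(0,h|_{\p\Om},\Delta h|_{\p\Om})$. Under the coefficient equality the operators $G_{Q_1}$ and $G_{Q_2}$ coincide, and since $h\in N_{A_1,X_1,V_1,\delta_2}$ already solves the boundary value problem that characterizes $G_{Q_1}(0,h|_{\p\Om},\Delta h|_{\p\Om})$, uniqueness in Lemma \ref{lem_regularity} will force $D\tilde v(0)h=h$. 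Combining with $DS_{Q_2,w_2}(0)=\mathrm{Id}$ gives $DT_{Q_2}(0)h=h$, as claimed.
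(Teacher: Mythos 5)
Your proposal is correct, and it takes a genuinely different (and in fact leaner) route than the paper. The paper defines $u_{2,v}$ directly as the $Q_2$-solution produced by the Cauchy-data inclusion, and since this gives no explicit formula for $v\mapsto u_{2,v}$ it must prove smoothness indirectly: it sets $r_v=u_{1,v}-u_{2,v}$, shows $r_v$ solves a fixed-point equation $r=GP(f(v,r))$ involving the projection $P$ of Lemma~\ref{lemma_projection} and the bounded inverse $G$ of Lemma~\ref{lem_bdd_inv}, and then invokes the implicit function theorem around $(0,w_1-w_2)$, with $T_{Q_2}:=S_{Q_1}-R$. You instead \emph{define} $T_{Q_2}=S_{Q_2,w_2}\circ\tilde v$, which is manifestly smooth as a composition, and use the inclusion hypothesis only once to produce the auxiliary $Q_2$-solution $\hat u$; Lemma~\ref{lemma_cauchy_data_est} makes $\hat u$ close to $w_2$, and the converse direction of Lemma~\ref{lemma_perturbed_sol} together with the boundary identities $(\hat u-w_2)|_{\p\Om}=(u_{1,v}-w_1)|_{\p\Om}$ and $\Delta(\hat u-w_2)|_{\p\Om}=\Delta(u_{1,v}-w_1)|_{\p\Om}$ force $\hat u=T_{Q_2}(v)$, transferring the full Cauchy data. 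This bypasses the projection machinery of Lemmas~\ref{lemma_projection}--\ref{lem_bdd_inv} entirely, at the modest cost that the normal-derivative matching is no longer established by construction but recovered a posteriori. Your computation of $DT_{Q_2}(0)$ under $A_1=A_2$, $X_1=X_2$, $V_1=V_2$ is also correct. One discrepancy to flag: you obtain $T_{Q_2}\in C^{k-1}$, not the stated $C^k$; but the paper's own argument also gives at most $C^{k-1}$, since its IFT function $F(v,r)$ depends on $v$ only through $u_{1,v}=S_{Q_1}(v)$, which Lemma~\ref{lemma_perturbed_sol} guarantees to be merely $C^{k-1}$ in $v$. So the statement's ``$C^k$'' appears to be a slip, and your regularity is the one actually proved.
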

\begin{proof}
	We first use the assumption $C_{Q_1}^{w_1, \delta}\subseteq  C_{Q_2}^{0,C}$ to have, for any $v\in N_{A_1,X_1,V_1,\delta_1}$, a function $u_{2,v}$ with the same Cauchy data as $u_{1,v}$. Next we observe that, $u_{1,0}=w_1$, and $u_{2,0}$ and $w_2$ have the same Cauchy data and solve the equation $\Delta^2 u + Q(x,u, \nabla u, \Delta u)=0$ in $\Om$.
	This together with Lemma \ref{lemma_cauchy_data_est} implies that $u_{2,0}=w_2$. Moreover, by \eqref{lemma_estimate_r} one has 
	\begin{align*}
		\norm{u_{1,v}-w_1}_{C^{4,\alpha}(\ol{\Om})}\le C   \norm{v}_{C^{4,\alpha}(\ol{\Om})}.
	\end{align*}
	The combination of this, Lemma \ref{lemma_cauchy_data_est}, $ (u_{2,v}-w_2)|_{\p\Om}=(u_{1,v}-w_1)|_{\p\Om}$,\, $ \Delta(u_{2,v}-w_2)|_{\p\Om}=\Delta (u_{1,v}-w_1)|_{\p\Om}$ $ \p_{\nu}(u_{2,v}-w_2)|_{\p\Om}=\p_{\nu}(u_{1,v}-w_1)|_{\p\Om}$,\, $ \p_{\nu}\Delta (u_{2,v}-w_2)|_{\p\Om}=\p_{\nu} \Delta(u_{1,v}-w_1)|_{\p\Om}$ and  $\norm{u_{2,v}}_{C^{4,\alpha}(\ol{\Om})}\le C$ entail 
	\begin{align*}
		\norm{u_{2,v}-w_2}_{C^{4,\alpha}(\ol{\Om})}\le C \norm{v}_{C^{4,\alpha}(\ol{\Om})}.
	\end{align*}
	Let us denote $r_v= u_{1,v}-u_{2,v}$, then $r_v$ satisfies
	\begin{equation}
	\begin{aligned}\label{eq_4.9}
		&(\Delta^2 + A_2\Delta + X_2\cdot \nabla +V_2)r_v \\&=  (A_2\Delta + X_2\cdot \nabla +V_2)r_v+ Q_2(x,u_{2,v},\nabla u_{2,v},\Delta u_{2,v})-Q_1(x,u_{1,v},\nabla u_{1,v},\Delta u_{1,v})\\&=
		(A_2\Delta + X_2\cdot \nabla +V_2)r_v-Q_1(x,u_{1,v},\nabla u_{1,v},\Delta u_{1,v})\\  &\quad+ Q_2(x,u_{1,v}-r_v,\nabla (u_{1,v}-r_v),\Delta (u_{1,v}-r_v)).
	\end{aligned}
	\end{equation}
	Let $G$ be the bounded inverse of $ (\Delta^2 + A_2\Delta + X_2\cdot \nabla +V_2): Y_2\rightarrow Z $ from Lemma \ref{lem_bdd_inv}. Then $r_v$  solves the fixed point equation:
	\begin{align*}
		r_v= G((A_2\Delta + X_2\cdot \nabla +V_2)r_v+ Q_2(x,u_{1,v}-r_v,\nabla (u_{1,v}-r_v),\Delta (u_{1,v}-r_v))-Q_1(x,u_{1,v},\nabla u_{1,v},\Delta u_{1,v})).
	\end{align*}
	
	Our next goal is to show the smooth dependence of $r_v$ on $v$. To this end we recall the projection  $P : C^{4,\alpha} (\ol{\Om})\rightarrow C^{4,\alpha} (\ol{\Om})\cap Z $ from Lemma \ref{lemma_projection}, because a  general function of the form $(A_2\Delta + X_2\cdot \nabla +V_2)r_v+ Q_2(x,u_{1,v}-r_v,\nabla (u_{1,v}-r_v),\Delta (u_{1,v}-r_v))-Q_1(x,u_{1,v},\nabla u_{1,v},\Delta u_{1,v})$ may not belong to the set $C^{4,\alpha} (\ol{\Om})\cap Z$. Now define the function 
	\begin{align*}
		F: N_{A_1,X_1,V_1,\delta_1}\times C^{4,\alpha} (\ol{\Om}) \rightarrow C^{4,\alpha} (\ol{\Om}) \quad \mbox{by}\quad F(v,r)= r- G P(f(v,r)),
	\end{align*}
	where $f(v,r)=(A_2\Delta + X_2\cdot \nabla +V_2)r+ Q_2(x,u_{1,v}-r,\nabla (u_{1,v}-r),\Delta (u_{1,v}-r))-Q_1(x,u_{1,v},\nabla u_{1,v},\Delta u_{1,v}).$ Next, we see that 
	\begin{equation}\label{eq_4.10}
		F(0,w_1-w_2)= w_1-w_2- GP(f(0,w_1-w_2)), \quad D_rF(0, w_1-w_2)h = h- D_r GP(f(0,w_1-w_2))h.
	\end{equation}
	The combination of \eqref{eq_4.9} and $u_{1,0}=w_1$ implies 
	\begin{align*}
		f(0,w_1-w_2)&= (A_2\Delta + X_2\cdot \nabla +V_2)(w_1-w_2)+ Q_2(x,w_2,\nabla w_2,\Delta w_2)-Q_1(x,w_1,\nabla u_{1,v},\Delta u_{1,v})\\
		&=(A_2\Delta + X_2\cdot \nabla +V_2)(w_1-w_2)+ \Delta^2(w_1-w_2)\\
		&= (\Delta^2+A_2\Delta + X_2\cdot \nabla +V_2)(w_1-w_2).
	\end{align*}
	In order to apply the projection $P$ to this we need $(\Delta^2+A_2\Delta + X_2\cdot \nabla +V_2)(w_1-w_2)\in C^{4,\alpha}(\ol{\Om})$. But by elliptic regularity this is true since the coefficients are $C^{4,\alpha}$ and $w_1,$ $w_2$ have the same Cauchy data and thus $w_1-w_2\in C^{8,\alpha}(\ol{\Om})$.
	This further entails 
	\begin{align*}
		F(0,w_1-w_2)&= w_1-w_2- GP((\Delta^2+A_2\Delta + X_2\cdot \nabla +V_2)(w_1-w_2))\\
		&= w_1-w_2- G((\Delta^2+A_2\Delta + X_2\cdot \nabla +V_2)(w_1-w_2))=0.
	\end{align*}
	We now focus on  computing the second term in \eqref{eq_4.10}. To this end,  we compute $D_r GP(f(0,w_1-w_2))h $. 
	\begin{align*}
		D_r& f(v,w_1-w_2))|_{v=0}h \\&=(A_2+X_2\cdot \nabla +V_2)h\\&\qquad- \p_{u} Q_2(x,u_{1,0}-(w_1-w_2),\nabla (u_{1,0}-(w_1-w_2)),\Delta (u_{1,0}-(w_1-w_2)) h\\&\qquad- \nabla_p Q_2(x,u_{1,0}-(w_1-w_2),\nabla (u_{1,0}-(w_1-w_2)),\Delta (u_{1,0}-(w_1-w_2))  \cdot \nabla h \\&\qquad-\p_{q} Q_2(x,u_{1,0}-(w_1-w_2),\nabla (u_{1,0}-(w_1-w_2)),\Delta (u_{1,0}-(w_1-w_2))\, \Delta h\\
		&= (A_2+X_2\cdot \nabla +V_2)h- \p_{u} Q_2(x,w_2,\nabla w_2,\Delta w_2) h\\&\qquad- \nabla_p Q_2(x,w_2,\nabla w_2,\Delta w_2)  \cdot \nabla h -\p_{q} Q_2(x,w_2,\nabla w_2,\Delta w_2)\, \Delta h =0.
	\end{align*}
	This implies $ D_rF(0,w_1-w_2)h =h.$
	It now follows from the implicit function theorem  that there exist $\delta_1$ and $\delta_2$ with $0 < \delta_2\le \delta_1$ and a  $C^k$ map $R: N_{A_1,X_1,V_1,\delta_2} \rightarrow C^{4,\alpha}(\ol{\Om})$ such that $R(v)= \tilde{r}$ is the unique solution of 
	\begin{align*}
		F(v, \tilde{r})=0\implies \tilde{r}= GP(f(v, \tilde{r})), \quad \mbox{for $\tilde{r}$ close to $w_1-w_2$}.   
	\end{align*}
	Next choosing $v\in N_{A_1,X_1,V_1,\delta_2}$ in $u_{1,v}= S_{Q_1}(v)$, we find that $r_v$ is in the range of $R$ and it satisfies the preceding equation. Moreover using the fact that $\norm{u_{i,v}-w_i}_{C^{4,\alpha}(\ol{\Om})}\le C \norm{v}_{C^{4,\alpha}(\ol{\Om})}$ for $i\in \{1,2\}$, we conclude that 
	\begin{align*}
		\norm{r_v-(w_1-w_2)}_{C^{4,\alpha}(\ol{\Om})}\le C \norm{v}_{C^{4,\alpha}(\ol{\Om})}.
	\end{align*}
	By the uniqueness of $\tilde{r}=R(v) $ near $w_1-w_2$ we have that $r_v=R(v)$, for $v\in N_{A_1,X_1,V_1, \delta_2}$ and consequently, the map $v \mapsto r_v$ is $C^k$. Now we are ready to define the map $T_{Q_2}$. The map 
	\begin{align*}
		T_{Q_2}\colon N_{A_1,X_1,V_1,\delta_2}\rightarrow C^{4,\alpha}(\ol{\Om}) \quad \mbox{ is defined as}\quad T_{Q_2}(v):= S_{Q_1}(v)- R(v).
	\end{align*}
	We now show that $DT_{Q_2}(0)h=h$, provided $A_1=A_2, X_1=X_2$, and $V_1=V_2$. To achieve this, we argue again using the implicit function theorem. We now differentiate $F(v,R(v))=0$ with respect to $v$ and evaluate at $v=0$ to conclude
	\begin{align*}
		DR(0)h = -[D_rF(0,R(0))]^{-1}D_vF(0,R(0))h.
	\end{align*}
	Since $D_rF(0,R(0))h =h$ and $D_vF(0,R(0))h=0$, this implies
	$ DR(0)h =0$ and consequently, we obtain $DT_{Q_2}(0)h=DS_{Q_1}(0)h-DR(0)h= h $.
\end{proof}

\section{First linearization}\label{sec:first_lin}
We start with recalling the following sets
\begin{align*}
	\V_{A,X,V}&= \{ u\in {C^{4,\alpha}(\ol{\Om})}: (\Delta^2+ A \Delta + X\cdot \nabla +V) u=0\},\\
	N_{A,X,V,\delta}&= \{u \in C^{4,\alpha}(\ol{\Omega}) :  \left(\Delta^2+ A \Delta + X\cdot \nabla +V\right)u=0,\, \mbox{and}\, \norm{u}_{C^{4,\alpha}(\ol{\Omega}} \le \delta\}.
\end{align*}
Let $u_{j,v}$ be the solutions of $ \Delta^2 u_{j,v} + Q_j(x,u_{j,v}(x),\nabla u_{j,v}(x),\Delta u_{j,v}(x))=0$ for any $v\in N_{A_1,X_1,V_1, \delta}$ with small $\delta >0$ given by Lemma \ref{lemma_fixed_point}, and Lemma \ref{lem_solution_map_for_both}, $j=1,2$.
\begin{lemma}\label{lm:determining_coeffi}
	Suppose $C_{Q_1}^{w, \delta}  \subseteq C_{Q_2}^{0,C}$. Then there is a $\delta_1>0$ such that for any $v\in N_{A_1,X_1,V_1, \delta_1} $ one has 
	\begin{align}
		A_1(x)=A_2(x), \,\, X_1(x)=X_2(x), \,\, V_1(x)= V_2(x)\quad \mbox{for $x\in \ol{\Omega}$}.
	\end{align}
\end{lemma}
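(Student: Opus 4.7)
The strategy is a first-order linearization of the Cauchy data equality at the base point, reducing the nonlinear problem to a Cauchy data matching problem for two linear perturbed biharmonic operators, followed by an application of the known linear uniqueness result.

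Fix $v \in N_{A_1,X_1,V_1,\delta_1}$ and consider the two smooth one-parameter families $u_{1,\epsilon v} := S_{Q_1}(\epsilon v)$ and $u_{2,\epsilon v} := T_{Q_2}(\epsilon v)$ for $\epsilon$ in a small neighborhood of $0$. By Lemma \ref{lem_solution_map_for_both}, these families have identical Cauchy data for every small $\epsilon$. Differentiating the Cauchy data identity in $\epsilon$ at $\epsilon = 0$ and using $DS_{Q_1,w_1}(0)v = v$ from Lemma \ref{lemma_perturbed_sol}, I conclude that $v$ and $\tilde v := DT_{Q_2}(0)v$ share the same Cauchy data on $\p\Om$. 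Differentiating the nonlinear equation satisfied by $u_{2,\epsilon v}$ in $\epsilon$ at $\epsilon = 0$ and using $u_{2,0} = w_2$, I get that $\tilde v$ solves the linearization of $Q_2$ at $w_2$:
\begin{equation*}
(\Delta^2 + A_2 \Delta + X_2 \cdot \nabla + V_2)\tilde v = 0 \quad \mbox{in } \Om.
\end{equation*}
Therefore, to every small solution $v$ of the operator $P_1 := \Delta^2 + A_1 \Delta + X_1\cdot \nabla + V_1$ there corresponds a solution $\tilde v$ of $P_2 := \Delta^2 + A_2 \Delta + X_2 \cdot \nabla + V_2$ having the same Cauchy data. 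By linearity, the smallness restriction can be removed, and the (local) Cauchy data sets of $P_1$ and $P_2$ coincide.

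To conclude, I invoke the uniqueness theorem for the linear biharmonic inverse problem with second, first, and zeroth order perturbations (see, e.g., \cite{KLU_biharmonic, BKS_poly, BKSU_nonlinear_CPDE}): equality of the Cauchy data sets forces $A_1 = A_2$, $X_1 = X_2$, and $V_1 = V_2$ in $\ol{\Om}$. Concretely, setting $h := v - \tilde v$, testing the identity
\begin{equation*}
P_1 h = -(A_1 - A_2)\Delta \tilde v - (X_1 - X_2)\cdot \nabla \tilde v - (V_1 - V_2)\tilde v
\end{equation*}
against a solution $z$ of the adjoint $P_1^* z = 0$ and using the vanishing Cauchy data of $h$ to kill the boundary terms gives
\begin{equation*}
\int_{\Om} \bigl[(A_1 - A_2)\Delta \tilde v + (X_1 - X_2)\cdot \nabla \tilde v + (V_1 - V_2)\tilde v\bigr] z \,\D x = 0.
\end{equation*}
Choosing $\tilde v$ and $z$ from a suitable family of test solutions—e.g. complex geometric optics solutions for the perturbed biharmonic operator, globalized to the local-data setting via the Runge approximation theorem of Section \ref{sec:runge}—decouples the three coefficient differences and yields the claimed equalities.

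The main obstacle is precisely this last decoupling step: three distinct coefficients $A, X, V$ appear simultaneously in the orthogonality identity, and one must guarantee that the Runge-approximable test solutions to $P_2$ and $P_1^*$ are rich enough to separate a scalar Laplace coefficient from a vector-valued gradient coefficient and a scalar zeroth-order coefficient. This is handled by combining the cited linear uniqueness results for perturbed biharmonic operators with the Runge approximation in Section \ref{sec:runge}, which bridges the gap between the global DN-type results in the literature and the local Cauchy data hypothesis in the present setting.
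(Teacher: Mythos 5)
Your overall skeleton is the same as the paper's: differentiate the matched families $S_{Q_1}(\epsilon v)$ and $T_{Q_2}(\epsilon v)$ from Lemmas \ref{lemma_perturbed_sol} and \ref{lem_solution_map_for_both}, use equality of Cauchy data to kill the boundary terms, and arrive at an orthogonality identity of the form $\int_{\Om}\left[(A_1-A_2)\Delta \tilde v+(X_1-X_2)\cdot\nabla \tilde v+(V_1-V_2)\tilde v\right]z\,\D x=0$ over pairs of solutions, to be fed into a uniqueness result for the linearized problem. However, the decisive step — concluding $A_1=A_2$, $X_1=X_2$, $V_1=V_2$ from that identity — is exactly what you leave open, and the tools you point to do not close it. The works \cite{KLU_biharmonic,BKS_poly,BKSU_nonlinear_CPDE} concern first-order (or anisotropic third-order) perturbations of $\Delta^2$; none of them gives uniqueness for the triple consisting of a scalar coefficient of $\Delta$, a vector field, and a potential, which is the decoupling problem you yourself identify as the main obstacle. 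The paper resolves it by invoking precisely tailored results: \cite{BKS_mrt_polyharmonic}*{Theorem 1.1} for $n\ge 3$ and \cite{BansalKrishnanPattar} for $n=2$, applied directly to the integral identity. Also note your proposed use of the Runge approximation of Section \ref{sec:runge} to ``globalize CGO solutions to the local-data setting'' is misplaced: the Cauchy data here are taken on all of $\p\Om$ (the word ``local'' refers only to the neighborhood of $w$ in the solution space), so no partial-data globalization is needed, and in the paper Runge approximation enters only later, in the proof of Theorem \ref{th_main_upto_gauge}, not in this lemma.

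Two smaller points. First, your concrete identity pairs the matched solutions $\tilde v=DT_{Q_2}(0)v$ with solutions of $P_1^{*}$; the family of such $\tilde v$ is not obviously all of the small solution set of $P_2$, so if you argue via the identity rather than via the black-box ``inclusion of Cauchy data sets'' formulation you should either swap the roles (write $P_2(v-\tilde v)=(P_2-P_1)v$ and test against $P_2^{*}$-solutions, so the free factor ranges over all $P_1$-solutions) or justify the richness; the paper handles this richness issue through Lemma \ref{lem_differomorphism}, which shows that $DS_{Q_1,w}(v)$ maps onto all small solutions of the linearization at $u_{1,v}$. Second, the paper linearizes at a general $v$ in the direction $h$ and thereby proves the stronger, $v$-dependent statement that the first derivatives of $Q_1$ at $u_{1,v}$ coincide with those of $Q_2$ at $u_{2,v}$ (equation \eqref{eq_5.3}), which is what Lemma \ref{lem_v_independency} actually uses afterwards; your linearization at $\epsilon=0$ along $\epsilon v$ only yields the coefficients at $w_1,w_2$, which matches the displayed statement but not the version needed downstream.
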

\begin{proof}
	For $v \in N_{A_1,X_1,V_1, \delta_1}$, we write $v_t=v+th$ for small $t$, where $h$  solves the linearized equation 
	\begin{align*}
		( \Delta^2 + A_1\Delta + X_1 \cdot \nabla +V_1)h =0 \quad \mbox{in}\quad \Om.
	\end{align*}
	Denote $u_{1,v_t}=S_{Q_1,w}(v_t)$ and $u_{2,v_t}=T_{Q_2,w}(v_t)$, where $u_{j,v_t}$ solves $ \Delta^2 u_{j,v_t}+ Q_j(x,u_{j,v_t}, \nabla u_{j,v_t}, \Delta u_{j,v_t})=0$ for $j=1,2$. Since $u_{j,v_t}$ are $C^2$ in $t$ and have the same Cauchy data, differentiating $ \Delta^2 u_{j,v_t}+ Q_j(x,u_{j,v_t}, \nabla u_{j,v_t}, \Delta u_{j,v_t})=0$ with respect to $t$ and then inserting $t=0$ we obtain
	\begin{align*}
		0=   &\Delta^2\dot{u}_{j}+ \p_q Q(x,u_{j,v}, \nabla u_{j,v}, \Delta u_{j,v}) \Delta \dot{u}_{j} \\&+ \nabla_p  Q_j(x,u_{j,v}, \nabla u_{j,v}, \Delta u_{j,v}) \cdot \nabla \dot{u}_{j} + \p_{u} Q(x,u_{j,v}, \nabla u_{j,v}, \Delta u_{j,v}) \, \dot{u}_{j}
	\end{align*}
	where $\dot{u}_{j}= \p_t u_{j, v_t}|_{t=0}$. Now taking the difference between the above equation for $j=1$ and $j=2$, we derive
	\begin{equation}\label{eq_5.2}
		\begin{aligned}
			0=& P_2 (\dot{u}_{1}-\dot{u}_2) + \bigg(\p_q Q_1(x,u_{1,v}, \nabla u_{1,v}, \Delta u_{1,v})- \p_q Q_2(x,u_{2,v}, \nabla u_{2,v}, \Delta u_{2,v} )\bigg) \Delta \dot{u}_{1} \\& \quad+ \bigg(\nabla_p Q_1(x,u_{1,v}, \nabla u_{1,v}, \Delta u_{1,v})- \nabla_p Q_2(x,u_{2,v}, \nabla u_{2,v}, \Delta u_{2,v})\bigg) \cdot \nabla \dot{u}_{1} \\&\quad+\bigg(\p_u Q_1(x,u_{1,v}, \nabla u_{1,v}, \Delta u_{1,v})- \p_u Q_2(x,u_{2,v}, \nabla u_{2,v}, \Delta u_{2,v}) \bigg)\, \dot{u}_{1},
		\end{aligned} 
	\end{equation}
	where \[P_2u = \Delta^2u + \p_q Q_2(x,u_{2,v}, \nabla u_{2,v}, \Delta u_{2,v}) \Delta u 
	+ \nabla_p  Q_2(x,u_{2,v}, \nabla u_{2,v}, \Delta u_{2,v}) \cdot \nabla u + \p_u Q_2(x,u_{2,v}, \nabla u_{2,v}, \Delta u_{2,v} )u.\]
	Suppose $\tilde{v}_2$ solves $P^*_2(\tilde{v}_2)=0$, where $P^*_2$ is the $L^2$ adjoint of $P_2$ and the form of $P_2^*$  will the same  as $P_2$ with different coefficients.
	Now multiplying \eqref{eq_5.2} by $\tilde{v}_2$, and then utilizing integration by parts we conclude
	\begin{align*}
		0=&\int_{\Om}  \bigg(\p_q Q_1(x,u_{1,v}, \nabla u_{1,v}, \Delta u_{1,v})- \p_q Q_2(x,u_{2,v}, \nabla u_{2,v}, \Delta u_{2,v} ) \bigg)\Delta \dot{u}_{1}\, \tilde{v}_2\\& \quad+ \bigg(\nabla_p Q_1(x,u_{1,v}, \nabla u_{1,v}, \Delta u_{1,v})- \nabla_p Q_2(x,u_{2,v}, \nabla u_{2,v}, \Delta u_{2,v})\bigg) \cdot \nabla \dot{u}_{1}\, \tilde{v}_2 \\&\quad+\bigg(\p_u Q_1(x,u_{1,v}, \nabla u_{1,v}, \Delta u_{1,v})- \p_u Q_2(x,u_{2,v}, \nabla u_{2,v}, \Delta u_{2,v}) \bigg)\, \dot{u}_{1}\, \tilde{v}_2.  
	\end{align*}
	It remains to study $ \dot{u}_1= DS_{Q_1,w}(v)h$. By Lemma \ref{lem_differomorphism} when $ v \in N_{A_1,X_1,V_1, \delta_1}$, any sufficiently small solution $\tilde{v}_1$ of   \[(\Delta^2 + \p_q Q(x,u_{1,v}, \nabla u_{1,v}, \Delta u_{1,v} \Delta+ \nabla_p  Q(x,u_{1,v}, \nabla u_{1,v}, \Delta u_{1,v} \cdot \nabla + \p_u Q(x,u_{1,v}, \nabla u_{1,v}, \Delta u_{1,v} )(\cdot)=0\]  can be written as $DS_{Q_1,w}(v)h$ for suitable $h$. This further entails
	\begin{equation*}
		\begin{aligned}
			0=&\int_{\Om}  \bigg(\p_q Q_1(x,u_{1,v}, \nabla u_{1,v}, \Delta u_{1,v})- \p_q Q_2(x,u_{2,v}, \nabla u_{2,v}, \Delta u_{2,v} ) \bigg)\Delta \tilde{v}_1\, \tilde{v}_2\\& \quad+ \bigg(\nabla_p Q_1(x,u_{1,v}, \nabla u_{1,v}, \Delta u_{1,v})- \nabla_p Q_2(x,u_{2,v}, \nabla u_{2,v}, \Delta u_{2,v})\bigg) \cdot \nabla \tilde{v}_1\, \tilde{v}_2 \\&\quad+\bigg(\p_u Q_1(x,u_{1,v}, \nabla u_{1,v}, \Delta u_{1,v})- \p_u Q_2(x,u_{2,v}, \nabla u_{2,v}, \Delta u_{2,v}) \bigg)\, \tilde{v}_1\, \tilde{v}_2.
		\end{aligned}
	\end{equation*}
	Now using  \cite[Theorem 1.1]{BKS_mrt_polyharmonic} in  dimensions $n \ge 3$, and using \cite{BansalKrishnanPattar} in two dimensions, we can conclude that 
	\begin{align}\label{eq_5.3}
		\begin{cases}
			\p_q Q(x,u_{1,v}, \nabla u_{1,v}, \Delta u_{1,v})= \p_q Q(x,u_{2,v}, \nabla u_{2,v}, \Delta u_{2,v}),\\
			\nabla_p Q(x,u_{1,v}, \nabla u_{1,v}, \Delta u_{1,v})=  \nabla_p Q(x,u_{2,v}, \nabla u_{2,v}, \Delta u_{2,v}),\\
			\p_u Q(x,u_{1,v}, \nabla u_{1,v}, \Delta u_{1,v})=  \p_u Q(x,u_{2,v}, \nabla u_{2,v}, \Delta u_{2,v}).
		\end{cases}
	\end{align}
	This completes the proof.
\end{proof}

\begin{lemma}\label{lem_v_independency}
	In the context of Lemma \ref{lm:determining_coeffi}, we see that the function $\phi_v= u_{2,v}-u_{1,v}$ is independent of $v \in N_{A_1,X_1,V_1, \delta_1}$.
\end{lemma}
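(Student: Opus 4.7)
The plan is to show that the Fréchet derivative of $v\mapsto \phi_v$ vanishes everywhere on $N_{A_1,X_1,V_1,\delta_1}$ and then conclude constancy from path-connectedness of this set. The argument is a first-order linearization combined with unique continuation.

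Concretely, I fix $v\in N_{A_1,X_1,V_1,\delta_1}$ and $h\in \V_{A_1,X_1,V_1}$ of sufficiently small norm, and set $v_t=v+th$. By Lemmas \ref{lemma_perturbed_sol} and \ref{lem_solution_map_for_both}, the solution maps $S_{Q_1,w}$ and $T_{Q_2}$ are at least $C^{k-1}$, so the derivatives $\dot{u}_{j}:=\p_t u_{j,v_t}|_{t=0}$ exist in $C^{4,\alpha}(\ol{\Om})$. Differentiating the nonlinear equations $\Delta^2 u_{j,v_t}+Q_j(x,u_{j,v_t},\nabla u_{j,v_t},\Delta u_{j,v_t})=0$ in $t$ at $t=0$, each $\dot{u}_{j}$ satisfies the linearized equation
\[
\Delta^2\dot{u}_{j} + \p_q Q_j(x,u_{j,v},\nabla u_{j,v},\Delta u_{j,v})\Delta \dot{u}_{j} + \nabla_p Q_j(x,u_{j,v},\nabla u_{j,v},\Delta u_{j,v})\cdot\nabla \dot{u}_{j} + \p_u Q_j(x,u_{j,v},\nabla u_{j,v},\Delta u_{j,v})\dot{u}_{j} = 0.
\]
By \eqref{eq_5.3} the three coefficient functions for $j=1$ and $j=2$ coincide, so $\dot{u}_{1}$ and $\dot{u}_{2}$ satisfy the same linear PDE. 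Moreover, since $u_{1,v_t}$ and $u_{2,v_t}$ share the same full Cauchy data for every $t$ by construction of $T_{Q_2}$ in Lemma \ref{lem_solution_map_for_both}, the same is true of $\dot{u}_{1}$ and $\dot{u}_{2}$. Hence $\dot{u}_{1}-\dot{u}_{2}$ solves the homogeneous linear equation with vanishing Cauchy data; unique continuation, as already exploited in Lemma \ref{Lemma_est_2}, forces $\dot{u}_{1}=\dot{u}_{2}$, i.e., $\p_t\phi_{v_t}|_{t=0}=0$.

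Since $h$ was arbitrary in $\V_{A_1,X_1,V_1}$, the Fréchet derivative of the $C^{k-1}$ map $v\mapsto \phi_v$ vanishes at every interior point of $N_{A_1,X_1,V_1,\delta_1}$. That set is convex (the intersection of the linear space $\V_{A_1,X_1,V_1}$ with a closed $C^{4,\alpha}$-ball), hence path-connected, and $\phi$ is continuous on it; therefore $\phi_v$ must be independent of $v$. The only delicate point I foresee is the invocation of unique continuation for the common linear fourth-order operator, but this is a routine consequence of the Carleman estimate already recalled in the proof of Lemma \ref{lemma_cauchy_data_est}, so no new technical input is required.
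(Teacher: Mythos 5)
Your proof is correct and takes essentially the same approach as the paper: first-order linearization of the two nonlinear equations, invoking \eqref{eq_5.3} to identify the coefficients of the linearized operator, and then unique continuation from zero Cauchy data. The only cosmetic difference is that you establish vanishing of the Fréchet derivative in every direction $h$ and appeal to convexity, whereas the paper differentiates the map $t\mapsto\phi_{tv}$ along the ray from $0$ to $v$ and concludes constancy directly on $[0,1]$.
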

\begin{proof}
	Define the function $f(t,x) = \phi_{tv}(x)$ for $x\in \ol{\Om}$. Then $f(t,x)$ is smooth in $t$ and has zero Cauchy data on $\p\Om$. Moreover it satisfies
	\begin{align*}
		\Delta^2 f(t,x) = -Q_2(x, u_{2,tv}, \nabla u_{2,tv} , \Delta u_{2,tv})+ Q_1(x, u_{1,tv}, \nabla u_{1,tv} , \Delta u_{1,tv}).
	\end{align*}
	Differentiating the above equation with respect to $t$, and then using  \eqref{eq_5.3} we obtain
	\begin{align*}
		\Delta^2 g(t,x)&=  \p_q Q_2(x, u_{2,tv}, \nabla u_{2,tv} , \Delta u_{2,tv}) \p_t \Delta (u_{1,tv}- u_{2,tv}) \\&  \quad + \nabla_p Q_2(x, u_{2,tv}, \nabla u_{2,tv} , \Delta u_{2,tv}) \cdot \p_t\nabla (u_{1,tv}- u_{2,tv}) \\& \quad+ \p_u Q_2(x, u_{2,tv}, \nabla u_{2,tv} , \Delta u_{2,tv})\, \p_t(u_{1,tv}- u_{2,tv}),
	\end{align*}
	where $ g(t,x) = \p_t f(t,x) = \p_t ( u_{2,tv}- u_{1,tv})$. This entails
	\begin{align*}
		0&=   \Delta^2 g(t,x)+  \p_q Q_2(x, u_{2,tv}, \nabla u_{2,tv} , \Delta u_{2,tv})  \Delta g(t,x)   \\& \quad+ \nabla_p Q_2(x, u_{2,tv}, \nabla u_{2,tv} , \Delta u_{2,tv}) \cdot \nabla g(t,x)+ \p_u Q_2(x, u_{2,tv}, \nabla u_{2,tv} , \Delta u_{2,tv})\, g(t,x), 
	\end{align*}
	Since $g(t,x)$ has zero Cauchy data, this implies $g(t,x)=0$ and consequently $f(t,x)$ is independent of $t$. In particular, $\phi_v= \phi_0$.
\end{proof}
\section{proof of main results}\label{sec:proof_of_main}
In this section we present the proof of Theorem \ref{th_main_upto_gauge}. Then the proof of Theorem \ref{Th:main} follows from the proof of  Theorem \ref{th_main_upto_gauge} 
\begin{proof}[Proof of Theorem \ref{th_main_upto_gauge}]
	Suppose  $w_1$ solves $ \Delta^2 w_1+ Q_1(x, w_1, \nabla w_1, \Delta w_1)=0 $ and assume that $C_{Q_1}^{w_1, \delta}\subseteq C_{Q_1}^{0,C}$. Using Lemma \ref{lem_v_independency}, we have
	\begin{align*}
		\Delta^2\varphi&=\Delta^2 u_{2,v} - \Delta^2 u_{1,v}\\
		&= Q_1(x,u_{1,v},\nabla u_{1,v},\Delta u_{1,v}) - Q_2(x,u_{2,v},\nabla u_{2,v},\Delta u_{2,v})\\
		&= Q_1(x,u_{1,v},\nabla u_{1,v},\Delta u_{1,v}) - Q_2(x,u_{1,v}+\varphi,\nabla (u_{1,v}+\varphi),\Delta (u_{1,v}+\varphi))
	\end{align*}
	which implies
	\begin{equation*}
		Q_1(x,u_{1,v},\nabla u_{1,v},\Delta u_{1,v}) = \Delta^2\varphi + Q_2(x,u_{1,v}+\varphi,\nabla (u_{1,v}+\varphi),\Delta (u_{1,v}+\varphi))=T_{\phi}Q_2(x,u_{1,v}, \nabla u_{1,v}, \Delta u_{1,v}) .
	\end{equation*}
	To conclude the proof, we need to show that there exists an $\e>0$ such that for any $\tilde x\in\ol{\Om}$ and $\lambda\in [-\e,\e]$ we can find a small solution of the linearized equation so that
    \begin{equation}
        \begin{aligned}\label{eq_u_plus_lambda}
		u_{1,v}(\tilde x)=w_1(\tilde x) + \lambda, \quad
		\nabla u_{1,v}(\tilde x)=\nabla w_1(\tilde x) + \ol{\lambda},\quad
		\Delta u_{1,v}(\tilde x)=\Delta w_1(\tilde x) + \lambda.
	\end{aligned}
    \end{equation}
	We begin by fixing $x_1\in\ol{\Om}$, and then use Runge approximation from Lemma \ref{lemma_Runge_approximation} to generate a solution $v_{x_1}$ of $\Delta^2 v + A_1\Delta v + X_1\cdot \nabla v + V_1 v=0$ such that 
	\begin{equation*}
		v_{x_1}(x_1)=4,\quad \Delta v_{x_1}(x_1)=4,\quad \p_{x_k}v_{x_1}(x_1)=4\quad\text{for all } 1\leq k\leq n.
	\end{equation*}
	Let $U_{x_1}$ be a neighborhood of $x_1$ in which $v_{x_1}(x), \Delta v_{x_1}(x), \p_{x_k}v_{x_1}(x)\geq 2$, $1\leq k\leq n$, for all $x\in \ol{U}_{x_1}\cap\ol{\Om}$. Notice that by Lemma \ref{lemma_fixed_point} we can write $u_{1,tv_{x_1}} = w_1+tv_{x_1}+S_{Q_1,w_1}(tv_{x_1})$, where $\norm{S_{Q_1,w_1}(tv_{x_1})}_{C^{4,\alpha}(\ol{\Om})}\le C t^2 \norm{v_{x_1}}^2_{C^{4,\alpha}(\ol{\Om})}$. Then
	\begin{align*}
		\abs{u_{1,tv_{x_1}}(x)-w_1(x)} &\geq 2\abs{t} - Ct^2 \norm{v_{x_1}}^2_{C^{4,\alpha}(\ol{\Om})}\\
		\abs{\Delta u_{1,tv_{x_1}}(x)-\Delta w_1(x)} &\geq 2\abs{t} - Ct^2 \norm{v_{x_1}}^2_{C^{4,\alpha}(\ol{\Om})}\\
		\abs{\p_{x_k}u_{1,tv_{x_1}}(x)-\p_{x_k}w_1(x)} &\geq 2\abs{t} - Ct^2 \norm{v_{x_1}}^2_{C^{4,\alpha}(\ol{\Om})}
	\end{align*}
	for $x\in \ol{U}_{x_1}\cap\ol{\Om}$ and for $1\leq k \leq n$. For $\abs{t}\leq \e_{x_1}:=\frac{1}{Ct^2\norm{v_{x_1}}^2_{C^{4,\alpha}(\ol{\Om})}}$ we get
	\begin{align*}
		\abs{u_{1,tv_{x_1}}(x)-w_1(x)} &\geq \abs{t}\\
		\abs{\Delta u_{1,tv_{x_1}}(x)-\Delta w_1(x)} &\geq \abs{t}\\
		\abs{\p_{x_k}u_{1,tv_{x_1}}(x)-\p_{x_k}w_1(x)} &\geq \abs{t}.
	\end{align*}
	We choose a finite cover ${U_{x_1},\ldots,U_{x_m}}$ of $\ol{\Om}$ and set $\delta_0$ such that $\norm{tv_{x_j}}^2_{C^{4,\alpha}(\ol{\Om})}\leq \delta$ whenever $\abs{t}\leq\delta_0$ and $1\leq j\leq m$. Then define
	\begin{equation*}
		\e:=\min\{\delta_0,\e_{x_1},\ldots,\e_{x_m}\}.
	\end{equation*}
	Next, fix $\tilde x\in\ol{\Om}$, $\lambda\in[-\e,\e]$ and choose $j$, so that $\tilde x\in U_{x_j}$. Then define $\rho, \rho_{\Delta}, \rho_k\colon [-\e,\e]\to \Rb,$
	\begin{align*}
		\rho(t)=u_{1,tv_{x_j}}(\tilde x) - w_1(\tilde x),\,\,
		\rho_{\Delta}(t)=\Delta u_{1,tv_{x_j}}(\tilde x) - \Delta w_1(\tilde x),\,\,
		\rho_k(t)=\p_{x_k}u_{1,tv_{x_j}}(\tilde x) - \p_{x_k}w_1(\tilde x),
	\end{align*}
	where $1\leq k \leq n$. Then these are all continuous with respect to $t$ and satisfy
    \[\rho(-\e), \rho_{\Delta}(-\e), \rho_k(-\e) \leq -\e \quad\mbox{and} \quad \rho(\e), \rho_{\Delta}(\e), \rho_k(\e) \geq \e \quad\mbox{for all $1\leq k \leq n$}.\] Hence by continuity there exists $\tilde t, \tilde{t}_{\Delta}, \tilde{t}_k \in [-\e,\e]$ such that
	\begin{equation*}
		\rho(\tilde t)=\rho_{\Delta}(\tilde t_{\Delta})=\rho_k(\tilde t_k)=\lambda.
	\end{equation*}
	Thus \eqref{eq_u_plus_lambda} is satisfied for some $v$, and the proof is finished.
\end{proof}

\section{Runge approximation}\label{sec:runge}
This section is devoted to proving the Runge approximation result for a fourth order biharmonic equation. A linear differential operator $P$ satisfies the \emph{Runge Property} if the adjoint operator $P^t$ satisfies  the \emph{Unique Continuation Property}, see \cite{Lax1956}. The notion of classical Runge approximation property dates back to the work of \cite{Lax1956,Malgrange1956}.  We also refer to some recent works \cite{RS_Quantative_Runge_I,RS_Quantative_Runge_II,LLS_Math_Annalen,JNS2023} where Runge approximation property is used to solve certain inverse problem. In this paper we are interested to a  quantitative version of Runge property and it is somehow related to the density of solutions of elliptic partial differential equations in large domain into a smaller domain with respect to the $L^2$-norm in the smaller domain. This can be achieved using Hahn-Banach theorem. A general set-up for such approximation results can be found in \cite{Browder_AJM}. 

To this end,  we recall that  $ Lu=(\Delta^2+ A\Delta + X \cdot \nabla + V) u.$
\begin{lemma}
	Let $\Om\subset \Rnn$ be a bounded domain with smooth boundary, and let $ A\in C^{2,\alpha}(\ol{\Om};\Rb^{n^2}), X\in C^{1,\alpha}(\ol{\Om};\Rnn)$ and $V\in C^{\alpha}(\ol{\Om})$. Assume that $0$ is not a Dirichlet eigenvalue of $L$ in $\Om$. Then for any $x_0\in \ol{\Om}$, there is a solution $u\in C^{4,\alpha}(\ol{\Om})$ of $L u=0$ in $\Om$ such that $u(x_0)\neq 0, \nabla u(x_0)\neq 0, \nabla^2 u(x_0)\neq 0$.
\end{lemma}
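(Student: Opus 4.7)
The plan is to combine a local Taylor-type construction near $x_0$ with a Runge approximation theorem for the fourth-order operator $L$. First I would construct, in a small neighborhood of $x_0$, a local solution $\tilde u$ of $L\tilde u=0$ whose value, gradient, and Hessian at $x_0$ are all (componentwise) nonzero; then I would approximate $\tilde u$ on that neighborhood by a global solution $u\in C^{4,\alpha}(\bar\Om)$ of $Lu=0$ in a topology strong enough (namely $C^2$ near $x_0$) that the nonvanishing at $x_0$ is preserved.

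\textbf{Local construction.} Enlarge $\Om$ to a smooth domain $\tilde\Om\supsetneq\bar\Om$ with $x_0$ in the interior of $\tilde\Om$ (needed only when $x_0\in\p\Om$), and extend $A,X,V$ smoothly to $\tilde\Om$; shrinking $\tilde\Om$ if necessary, arrange that $0$ is not a Dirichlet eigenvalue of the extended $L$ on $\tilde\Om$. Fix a small ball $B=B(x_0,\rho)\subset\tilde\Om$ and any data $(a_0,a_1,a_2)\in\Rb\times\Rb^n\times\Rb^{n\times n}$ with all entries nonzero. Take a polynomial $p$ of degree $2$ with $p(x_0)=a_0$, $\nabla p(x_0)=a_1$, $\nabla^2 p(x_0)=a_2$, and apply Lemma~\ref{lem_regularity} on $B$ to solve $Lr=-Lp$ in $B$ with zero Navier data. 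Combining the $C^{4,\alpha}$-estimate with the bound $\|Lp\|_{C^\alpha(\bar B)}\lesssim \rho$ yields $\|r\|_{C^2(\bar B)}\ll 1$ for $\rho$ sufficiently small, so $\tilde u:=p+r\in C^{4,\alpha}(\bar B)$ solves $L\tilde u=0$ in $B$ and inherits the nonvanishing of the prescribed data at $x_0$.

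\textbf{Global approximation.} The key ingredient is a Runge approximation theorem for $L$ on $\Om$: solutions $u\in C^{4,\alpha}(\bar\Om)$ of $Lu=0$ can be approximated, on any compact set $K\subset\bar\Om\cap\tilde\Om$ such that $\tilde\Om\setminus K$ is connected, by global solutions of $Lu=0$ defined on a neighborhood of $K$ inside $\tilde\Om$. The standard derivation uses Hahn--Banach together with unique continuation for the formal adjoint $L^*$ (which is a H\"older-coefficient perturbation of $\Delta^2$, and hence satisfies UCP); the argument is entirely analogous to the classical Runge theorem for second-order elliptic operators. Choose $K$ to contain a compact neighborhood of $x_0$ inside $\bar\Om$, and apply Runge to $\tilde u$ to obtain $u_j\in \V_{A,X,V}$ that converge to $\tilde u$ in $L^2$ near $x_0$; interior $C^{4,\alpha}$-estimates for $L$ applied to $u_j-\tilde u$ (which satisfies $L(u_j-\tilde u)=0$ locally and is small in $L^2$) upgrade this to $C^{4,\alpha}$-convergence on a smaller neighborhood of $x_0$. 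For $j$ large, $u_j(x_0)$, $\nabla u_j(x_0)$, $\nabla^2 u_j(x_0)$ are within any prescribed $\e$ of the nonzero values of $\tilde u$, hence all nonzero.

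\textbf{Main obstacle.} The principal technical difficulty is arranging the Runge approximation with sufficiently strong pointwise convergence at $x_0$, especially when $x_0\in\p\Om$: enlarging $\Om$ to $\tilde\Om$ so that $x_0$ is interior to $\tilde\Om$ is essential, because only the interior $C^{4,\alpha}$-regularity (which does not degenerate at $x_0$) allows one to pass from $L^2$-density to pointwise closeness of second-order derivatives. A minor additional point is ensuring that $0$ is not a Dirichlet eigenvalue of the extended operator on $\tilde\Om$; by continuity of eigenvalues in the domain, this holds for $\tilde\Om$ sufficiently close to $\Om$, and the assumption in the lemma that $0$ is not a Dirichlet eigenvalue of $L$ on $\Om$ guarantees the same for the localized problem on $B$ used in the local construction.
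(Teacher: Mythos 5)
Your overall route coincides with the paper's: extend the coefficients to a larger domain so that $x_0$ becomes an interior point and a Runge theorem applies, produce a local solution near $x_0$ whose value, gradient and Hessian at $x_0$ are nonzero, and then approximate it by a global solution in a topology controlling two derivatives at $x_0$. The paper does the local step by citing a local solvability theorem (Bers--John--Schechter) giving a solution with prescribed value, gradient and Laplacian at $x_0$, and the global step via Lemma \ref{lemma_Runge_approximation}, which yields density in the $C^2(\ol{\Om}_1)$ norm directly; your $L^2$-Runge followed by an interior Schauder upgrade is an acceptable substitute, given your correct observation that the approximants must solve the equation across $x_0$ (i.e.\ on the enlarged domain) when $x_0\in\p\Om$.

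The genuine problem is the quantitative justification of your local construction. For a quadratic polynomial $p$ one has $Lp=A\Delta p+X\cdot\nabla p+Vp$, and at $x_0$ this equals $A(x_0)\Delta p(x_0)+X(x_0)\cdot\nabla p(x_0)+V(x_0)p(x_0)$, which is generically nonzero; hence $\norm{Lp}_{C^{\alpha}(\ol{B})}$ is of size $O(1)$, not $O(\rho)$, so the claimed bound $\norm{Lp}_{C^{\alpha}(\ol{B})}\lesssim\rho$ is false. Moreover, Lemma \ref{lem_regularity} is stated for a fixed domain, and its constant on $B(x_0,\rho)$ is not asserted to be uniform in $\rho$, so applying it on $B$ does not by itself give $\norm{r}_{C^{2}(\ol{B})}\ll 1$. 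The step is repairable, but by a different mechanism: a scaling argument for the Navier problem on $B(x_0,\rho)$ shows the solution operator gains a factor $\rho^{4}$ in sup norm, so the second derivatives of $r$ are $O(\rho^{2})\norm{Lp}_{C^{\alpha}(\ol{B})}$ and the lower-order terms are absorbed for small $\rho$; note also that solvability on small balls is automatic for small $\rho$ and has nothing to do with the hypothesis that $0$ is not a Dirichlet eigenvalue of $L$ on $\Om$, contrary to your closing remark. Alternatively, one can simply invoke, as the paper does, a local solvability result producing a solution with prescribed Taylor data at $x_0$, after which your Runge step (or the paper's $C^2$-density lemma) finishes the proof.
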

\begin{proof}
	Since $ A\in C^{2,\alpha}(\ol{\Om};\Rb^{n^2}), X\in C^{1,\alpha}(\ol{\Om};\Rnn)$ and $V\in C^{\alpha}(\ol{\Om})$, we may choose a ball  $B$ such that $\ol{\Om}\subset B$ and extend  $A, X, V$ in such a way that  $A\in C_c^{2,\alpha}(B), X\in C_c^{1,\alpha}(B;\Rnn)$ and $V\in C_c^{\alpha}(B)$. We can also choose $B$ in such a way that $0$ is not an eigenvalue of the operator $Lu=(\Delta^2+ A\Delta + X \cdot \nabla + V) u$. This can be achieved considering the symmetric part of $L$ and using the continuous dependency of eigenvalues with respect to domains; see for instance  \cite{Babuska_1965,stefanov1990,Math222B_Spring2023,Bent_JFA}.
	
	Next by utilizing \cite[Theorem 1 in Section 5.4 of Part 2]{bjs} there is a small neighborhood $\Om_{x_0}$ of $x_0$ and a function $u\in C^{4,\alpha}(\ol{\Om}_{x_0})$ solving $Lu=(\Delta^2+ A\Delta + X \cdot \nabla + V) u=0$ in $\Om_{x_0}$ such that $u(x_0)=\p_{x_k}u(x_0)=\Delta u(x_0)=1$, $1\leq k\leq n$. By Lemma \ref{lemma_Runge_approximation} there exists a $\tilde u\in C^{4,\alpha}(\ol{\Om})$ solving $(\Delta^2+ A\Delta + X \cdot \nabla + V) \tilde u=0$ in $\Om$ with $\tilde u(x_0), \p_{x_k}\tilde u(x_0), \Delta \tilde u(x_0)$ being as close to $u(x_0)=\p_{x_k}u(x_0)=\Delta u(x_0)=1$ as possible. 
\end{proof}

Let $L, A, X$ and $V$ be as before and assume that $0$ is not a Dirichlet eigenvalue of $L$ in $\Om$. For a bounded linear functional $\mu$ on $C^2(\ol{\Om})$ we say that $u\in L^1(\Om)$ is a \emph{very weak solution} of 
\begin{equation}\label{BVP_very_weak}
	\begin{cases}
		L^*u = \mu & \quad\text{in }\Om \\
		u=\Delta u= 0&\quad\text{on }\p\Om
	\end{cases}
\end{equation}
if $	\int_{\Om}uL\varphi\,dx = \mu(\varphi)$, 
for any $\varphi\in C^4(\ol{\Om})\cap H^4_N(\Om)$ where $H^4_N(\Om):=\{v\in H^4(\Om) :  \varphi|_{\p\Om}=\Delta\varphi|_{\p\Om}=0\}$.
We next show the existence of a very weak solution to \eqref{BVP_very_weak}.

\begin{proposition}\label{Prop_7.2}
	For any bounded linear functional $\mu$ on $C^2(\ol{\Om})$ there exists a very weak solution $u\in W^{2,p}(\Om)$, $p<\frac{n+2}{n}$, to \eqref{BVP_very_weak}.
\end{proposition}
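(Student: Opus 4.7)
The plan is a duality argument via Riesz representation, resting on two ingredients: invertibility of $L$ with Navier data on the $L^{p'}$-scale, and Morrey's embedding $W^{4,p'}(\Om) \hookrightarrow C^2(\ol{\Om})$.

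First, I would use that $0$ is not a Dirichlet eigenvalue of $L$, together with standard $L^q$-theory for the biharmonic operator with Navier boundary conditions (which reduces to two consecutive Poisson problems for $\Delta$, with the lower order terms of $L$ handled by the Fredholm alternative), to obtain a bounded solution operator
\[
S: L^q(\Om) \to W^{4,q}(\Om) \cap H^4_N(\Om), \qquad 1 < q < \infty,
\]
where $\varphi = Sf$ is the unique solution of $L\varphi = f$ in $\Om$ with $\varphi|_{\p\Om} = \Delta \varphi|_{\p\Om} = 0$, satisfying the Calder\'on--Zygmund-type estimate $\|\varphi\|_{W^{4,q}(\Om)} \lesssim \|f\|_{L^q(\Om)}$.

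Next, for $p < (n+2)/n$ the H\"older conjugate $p' > (n+2)/2$ exceeds $n/2$, so Morrey's embedding yields $W^{4,p'}(\Om) \hookrightarrow C^{2,\alpha}(\ol{\Om})$ continuously for some $\alpha > 0$; in particular, $S$ maps $L^{p'}(\Om)$ boundedly into $C^2(\ol{\Om})$. Since $\mu$ is bounded on $C^2(\ol{\Om})$, the composition $\Lambda := \mu \circ S$ is a bounded linear functional on $L^{p'}(\Om)$, and the Riesz representation theorem supplies $u \in L^p(\Om)$ with
\[
\mu(Sf) \;=\; \int_\Om u\,f \,dx \qquad \text{for all } f \in L^{p'}(\Om).
\]
For any test function $\varphi \in C^4(\ol{\Om}) \cap H^4_N(\Om)$, uniqueness of the Navier problem forces $\varphi = S(L\varphi)$, and $L\varphi \in C^0(\ol{\Om}) \subset L^{p'}(\Om)$; substituting $f = L\varphi$ then recovers the very weak formulation $\int_\Om u\,L\varphi \,dx = \mu(\varphi)$.

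Finally, to upgrade $u \in L^p$ to $u \in W^{2,p}$, I would appeal to elliptic regularity for the distributional equation $L^* u = \mu$ with homogeneous Navier boundary data: because $\mu$ is a distribution of order at most $2$ (being bounded on $C^2$), it lies in $W^{-2,p}(\Om)$, and interior plus boundary Calder\'on--Zygmund estimates for the fourth order operator $L^*$ (via localization and flattening of $\p\Om$) gain four derivatives, placing $u$ in $W^{2,p}(\Om)$ with the Navier traces preserved in a trace-class sense. The main obstacle is this last step: reconciling the low regularity of $\mu$ with the Navier boundary conditions, and verifying that the precise threshold $p < (n+2)/n$ is exactly what makes the Sobolev--Morrey embedding balance the order-$2$ distributional nature of $\mu$.
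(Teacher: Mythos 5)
Your $L^p$ existence argument coincides with the paper's: both invert the Navier problem $L\varphi=f$, $(\varphi,\Delta\varphi)|_{\p\Om}=0$ in $W^{4,q}(\Om)$ (using that $0$ is not a Dirichlet eigenvalue, via the well-posedness result quoted from \cite{Gazzola_book}), embed $W^{4,q}(\Om)\hookrightarrow C^2(\ol{\Om})$ with $q=p'>(n+2)/2$, and obtain $u\in L^p(\Om)$ from Riesz representation. Your version is marginally cleaner: you compose $\mu$ with the solution operator $S$ directly on $L^{p'}$, whereas the paper passes through $\mathrm{Range}(L)$ and Hahn--Banach (superfluous, since $L\colon W^{4,q}_N(\Om)\to L^q(\Om)$ is already surjective); and you avoid the paper's slip of writing $W^{4,q}\subset W^{2,q}\subset C^2(\ol{\Om})$, whose second inclusion is false, by invoking $W^{4,p'}\hookrightarrow C^2$ directly.

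The genuine gap is the one you flag yourself: the upgrade from $u\in L^p(\Om)$ to $u\in W^{2,p}(\Om)$. The premise that $\mu$, being of order $\le 2$, lies in $W^{-2,p}(\Om)$ does not hold. Indeed $W^{-2,p}(\Om)=(W^{2,p'}_0(\Om))^*$, and boundedness of $\mu$ on $C^2(\ol{\Om})$ transfers only to spaces that embed into $C^2(\ol{\Om})$, whereas $W^{2,p'}_0(\Om)$ never embeds into $C^2(\ol{\Om})$ --- Morrey gives $W^{2,p'}\hookrightarrow C^{0,\gamma}$ at best. Writing $\mu=\sum_{|\alpha|\le 2}D^\alpha\mu_\alpha$ with $\mu_\alpha$ finite measures, each $\mu_\alpha$ lies in $W^{-s,p}$ only for $s>n/p'$, so a fourth-order elliptic gain lands $u$ in $W^{2-s,p}$ with $s>n/p'>0$, strictly short of $W^{2,p}$. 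The paper's own treatment has the same defect: it outsources to an interior parametrix of order $-4$ and asserts only $u\in W^{2,p}(\Om')$ for $\Om'\subset\subset\Om$, without a derivative count, so the global $W^{2,p}(\Om)$ assertion of the Proposition is never actually established there either. This is inconsequential downstream, since only the $L^p(\Om)$ membership of $\psi$ is used in the proof of Lemma \ref{lemma_Runge_approximation}.
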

\begin{proof}
	Since $0$ is not an eigenvalue of $L$ (see \cite[Equation 2.17]{Gazzola_book}), by \cite{Gazzola_book}*{Theorem 2.20} the following boundary value problem has unique solution $u\in W^{4,q}(\Om)$ for every $f\in L^q(\Om)$
	\begin{equation}\label{L_adjoint}
		\begin{cases}
			Lu = f & \quad\text{in }\Om \\
			u=\Delta u= 0&\quad\text{on }\p\Om.
		\end{cases}
	\end{equation}
    Now take $q$ such that $2q>n+2$, the condition $2q>n+2$ implies that $ W^{4,q}(\Omega)\subset W^{2,q}(\Omega)\subset C^2(\ol{\Omega})$
    Let us denote by $X:=\mathrm{Range}(L)\subset L^p(\Omega)$. Then for every $\varphi \in X$ we have $L\psi=\varphi$ for some $\psi\in W^{4,q}_N(\Om)\cap C^2(\ol{\Omega})$.  For a fixed continuous linear functional $\mu$ on $C^2(\ol{\Om})$ define an operator
	\begin{equation*}
		B (L\psi):=\int_{\Om}\psi\,d\mu,
	\end{equation*}
	and for this operator we have
	\begin{equation*}
		\abs{B (L\psi)}\leq C\norm{\mu}_{*}\norm{\psi}\leq C\norm{\mu}_{*}\norm{ L\,\psi}.
	\end{equation*}
	Here $\norm{\mu}_{*} = \sup_{\norm{f}_{C^2(\ol{\Omega})}=1} \abs{\mu(f)} $.
	Hence $B$ is bounded on $X\subset L^q(\Omega) $, and by Hahn-Banach it is bounded on $L^q(\Om)$. Hence the Riesz representation gives the existence of $u\in L^p(\Om)$ with the property that $p^{-1}+q^{-1} =1$ and 
	\begin{equation*}
		\int_{\Om}u\varphi\,dx = B(\varphi)\quad\text{for all }\varphi\in L^q(\Om).
	\end{equation*}
	For any $\psi\in  C_c^{\infty}(\Omega)$ we have $	B(L\psi)= \int\psi\, \D \mu=	\int u\,L\psi\, = \int L^*u\, \psi.$
	This implies $L^*u=\mu $ in $\Omega$,  where $u\in L^p(\Omega)$ with  $p<\frac{n+2}{n}$, since $q>\frac{n+2}{2}$. 
	
	Moreover, for open set $\Omega'\subset \subset \Omega$ one could proceed in the same  way to show that $u\in W^{2,p}(\Omega')$. This can be done using the fact that $L$ is an elliptic differential operator of  order $4$ and therefore it has a parametrix which is a  pseudodifferential operator of order $-4$ ; see \cite[Corollary 4.3]{Treves_vol_1}. Then the combination of  these facts along with \cite[Theorem 2.1]{Treves_vol_1}  conclude the proof.
\end{proof}

We now prove the following Runge approximation result.  To this end,  Let us consider two open sets $\Om_1\subset \Om$  in $\Rnn$ and also consider the sets
\begin{align*}
	N(\Om_1)=\{u \in C^{4,\alpha}(\ol{\Om_1}): Lu=0\,\, \mbox{in $ \Om_1$} \},\,\quad\mbox{and}\quad
	N(\Om) =\{u \in C^{4,\alpha}(\ol{\Om}): L u=0\,\, \mbox{in $ \Om$} \}.
\end{align*}

\begin{lemma}\label{lemma_Runge_approximation}
	Let $\Om_1\subset \Om$ be two open subsets in $\Rnn$ such that $\Om\setminus \ol{\Om}_1$ is connected. Suppose $A\in C_c^{2,\alpha}(\Om), X\in C_c^{1,\alpha}(\Om;\Rnn)$ and $V\in C_c^{\alpha}(\Om)$ and $0$ is not a Dirichlet eigenvalue of $ (\Delta^2+ A\Delta + X \cdot \nabla + V)$ in $\Om$. Then  $N(\Om)$ is dense in $N(\Om_1)$ with $C^2(\ol{\Om}_1) $ norm. Moreover, given any  $u\in N(\Om_1)$ and any $\epsilon>0$, there exists $v \in N(\Om)$ such that  $\norm{u-v|_{\Om_1} }_{C^2(\ol{\Om}_1)}\le \epsilon$.
\end{lemma}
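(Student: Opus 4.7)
The plan is to run a classical Lax--Malgrange Hahn--Banach argument, tailored to the fourth-order operator $L$ and the $C^2(\ol{\Om}_1)$ topology, with Proposition \ref{Prop_7.2} solving the adjoint BVP and unique continuation closing the loop. Suppose, for contradiction, that $N(\Om)|_{\Om_1}$ is not dense in $N(\Om_1)$. Then Hahn--Banach furnishes a continuous linear functional $\mu$ on $C^2(\ol{\Om}_1)$ that annihilates every $v|_{\Om_1}$ with $v\in N(\Om)$ but is nontrivial on $N(\Om_1)$. Extend $\mu$ to $\tilde\mu\in (C^2(\ol{\Om}))^*$ by $\tilde\mu(\phi):=\mu(\phi|_{\ol{\Om}_1})$; this $\tilde\mu$ is supported in $\ol{\Om}_1$ and still satisfies $\tilde\mu(v)=0$ for every $v\in N(\Om)$.

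Applying Proposition \ref{Prop_7.2} to $\tilde\mu$ produces a very weak solution $u\in W^{2,p}(\Om)$, $p<(n+2)/n$, of $L^*u=\tilde\mu$ in $\Om$ with Navier data $u=\Delta u=0$ on $\p\Om$. Since $\tilde\mu$ is supported in $\ol{\Om}_1$, we have $L^*u=0$ in $\Om\setminus\ol{\Om}_1$, and interior plus boundary elliptic regularity (the coefficients are smooth near $\p\Om$) upgrades $u$ to $C^{4,\alpha}(\overline{\Om\setminus\ol{\Om}_1})$. Next I use $\tilde\mu(v)=0$ to extract the remaining Cauchy data on $\p\Om$: a direct Green's identity computation (valid because $u=\Delta u=0$ on $\p\Om$ and $A,X$ have compact support in $\Om$, so $A=X=0$ on $\p\Om$) gives
\[
0=\tilde\mu(v)=\int_\Om u\,Lv\,dx+\int_{\p\Om}\bigl[v\,\p_\nu\Delta u+\Delta v\,\p_\nu u\bigr]\,dS=\int_{\p\Om}\bigl[v\,\p_\nu\Delta u+\Delta v\,\p_\nu u\bigr]\,dS
\]
for every $v\in N(\Om)$. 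By Lemma \ref{lem_regularity} the Navier boundary data $(v,\Delta v)|_{\p\Om}$ traces out all of $C^{4,\alpha}(\p\Om)\times C^{2,\alpha}(\p\Om)$ as $v$ varies in $N(\Om)$, and this set is dense in $L^2(\p\Om)\times L^2(\p\Om)$, forcing $\p_\nu u=\p_\nu\Delta u=0$ on $\p\Om$. Thus all four Cauchy data of $u$ vanish on $\p\Om$.

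Because $A,X,V$ vanish in a neighborhood of $\p\Om$, the operator $L^*$ is just $\Delta^2$ there; extending $u$ by zero across $\p\Om$ yields a $C^{4,\alpha}$ function that is biharmonic in a neighborhood of $\p\Om$ and vanishes identically on the outside. Analytic unique continuation for $\Delta^2$ then forces $u\equiv 0$ in a neighborhood of $\p\Om$ inside $\Om$, and standard interior UCP for the elliptic operator $L^*$ propagates this to $u\equiv 0$ on the whole connected set $\Om\setminus\ol{\Om}_1$. To finish, I pick any $v\in N(\Om_1)$, extend it (via Whitney) to $\tilde v\in C^{4,\alpha}(\ol{\Om})$, and let $\tilde v_0\in N(\Om)$ be the unique solution (Lemma \ref{lem_regularity}) with Navier data $(\tilde v,\Delta\tilde v)|_{\p\Om}$, so that $\tilde v-\tilde v_0\in C^4(\ol{\Om})\cap H^4_N(\Om)$ is an admissible test function. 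Then $\tilde\mu(\tilde v_0)=0$ and the very weak formulation yield
\[
\mu(v)=\tilde\mu(\tilde v)=\tilde\mu(\tilde v-\tilde v_0)=\int_\Om u\,L(\tilde v-\tilde v_0)\,dx=\int_\Om u\,L\tilde v\,dx=\int_{\Om_1} u\,Lv\,dx=0,
\]
using $L\tilde v_0=0$ on $\Om$, $u\equiv 0$ outside $\ol{\Om}_1$, and $Lv=0$ in $\Om_1$. This contradicts the nontriviality of $\mu$ on $N(\Om_1)$ and proves density.

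The main obstacle will be the unique continuation step: translating vanishing of all four Cauchy data on $\p\Om$ into pointwise vanishing of $u$ throughout the connected set $\Om\setminus\ol{\Om}_1$. The compact-support hypothesis on $A,X,V$ is crucial here, reducing the behavior near $\p\Om$ to analytic UCP for $\Delta^2$, after which the usual interior UCP for fourth-order elliptic operators with $C^{2,\alpha}$ coefficients suffices. A secondary technical point is that $u$ is only known to lie in $W^{2,p}(\Om)$; however, every Green's identity in the argument is applied either on $\Om\setminus\ol{\Om}_1$ (where $u$ is $C^{4,\alpha}$) or via the very weak formulation (where the test functions carry the regularity), so the low regularity of $u$ inside $\ol{\Om}_1$ is harmless.
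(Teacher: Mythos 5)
Your proof is correct and follows essentially the same route as the paper: Hahn--Banach plus the very weak adjoint solution from Proposition \ref{Prop_7.2}, recovery of the missing Cauchy data $\p_{\nu}u=\p_{\nu}\Delta u=0$ on $\p\Om$ by testing against global solutions with prescribed Navier data, unique continuation on the connected set $\Om\setminus\ol{\Om}_1$ exploiting the compact support of $A,X,V$, and finally annihilation of $N(\Om_1)$. The only deviation is cosmetic: you argue by contradiction and handle the last step by subtracting a global solution $\tilde v_0$ so that $\tilde v-\tilde v_0$ is an admissible test function (a slightly cleaner fix of the test-function boundary conditions), whereas the paper instead approximates the adjoint solution by $C_c^{\infty}(\Om_1)$ functions.
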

\newcommand{\omm}{\Omega \setminus \ol{\Omega}_1}
\begin{proof}
	We use the Hahn-Banach theorem to prove this result. Let $\mu$ be a linear functional  on $C^2(\ol{\Omega}_1)$ that satisfies $ \mu(v|_{\ol{\Omega}_1})=0$ for all $v\in N(\Omega)$. We now extend $\mu$, denoted by $\ol{\mu}$, as a continuous linear functional on $C^2(\ol{\Omega})$ in the following way. Define $\bar{\mu}: C^2(\ol{\Omega}) \rightarrow \Rb$ such that
	\begin{align}
		\ol{\mu}(u):= \mu (u|_{\ol{\Omega}_1}) \quad \mbox{for all $u\in C^2(\ol{\Omega})$}.
	\end{align}
	By Riesz representation  theorem, $\ol{\mu}$ is a Radon measure on $\ol{\Omega}$, and by the definition we have  $\operatorname{supp} \ol{\mu} \subset \ol{\Omega}_1 $.  Next we utilize Proposition \ref{Prop_7.2} to find a very weak solution $\psi\in L^p(\Omega)$  of 
	\begin{align*}
		L^*\psi&=\ol{\mu} \quad \mbox{in $\Omega$}\quad \mbox{and}, \quad 
		(\psi, \Delta \psi)=0 \quad \mbox{on $\p\Omega$}.
	\end{align*}
	Our next goal is to show that 	\begin{align}\label{eq_7.4}
		\psi=0 \quad \mbox{in $\Omega \setminus \ol{\Omega}_1$}.
	\end{align}
	\textbf{Step 1.} Assume that \eqref{eq_7.4}  holds. We then claim that $\mu|_{N(\Omega_1)} =0$.\\\smallskip
	Since $\operatorname{supp}\psi \subset  \omm$, there exists ${\psi_j}\in C_c^{\infty}(\Omega_1)$ with $\psi_j \rightarrow \psi$ in $L^{p}(\Omega)$.  Given any $u\in N(\Omega_1)$ we  can choose $\ol{u}\in C^{4, \alpha} (\ol{\Omega})$ such that $ \ol{u}= u$ on $\ol{\Omega}_1$. This implies
	\begin{align*}
		\mu(u) = \ol{\mu}  (\ol{u})= \int_{\Omega} \psi \,  L\ol{u}=  \lim_j \int_{\Omega} \psi_j \, L \ol{u}= \int_{\Omega_1} \psi_j \,  L u=0,
	\end{align*}
	and concludes that $\mu|_{N(\Omega_1)} =0$.\\
	\textbf{Step 2.} We now prove \eqref{eq_7.4}.\\\smallskip
	To prove this, we  use  the fact that support of $\ol{\mu}$ is contained in $\ol{\Omega}_1$, and combine this  with unique continuation principle.  Since $\operatorname{supp}{\ol\mu}\subset \ol{\Omega}_1$, this implies $\psi$ solves  
	$L^* \psi=0$ in $\Omega \setminus \ol{\Omega}_1$.
	
	Now we are going to use the fact that the coefficients  $A,X,V$ are compactly supported in $\Omega$. Let us assume that support of $A,X,V$ are contained in a compact set $K\subset \Omega$. Then we choose another open set $\Omega_2$ such that $K, \Omega_1\subset \Omega _2\subset \Omega$. Since $\Omega\setminus \Omega_2$ is connected and $\psi$ solves $  \Delta^2 \psi=0 \quad \mbox{in $\Omega\setminus \Omega _2$}$. This  implies that $\psi\in C^{4,\alpha}$ near $\p\Omega$. Infact, $\psi$ is $C^{4,\alpha}$ in $\omm$. We now let $v\in N(\Omega)$ and choose a cut-off function $\chi$ in such a way that $\chi =1$ near $\Omega_1$ and $\psi\in C^{4,\alpha}$  in  $\operatorname{supp}(1-\chi) \cap \ol{\Om}$.  Utilizing these facts we deduce
	\begin{align}\label{eq_7.5}
		0&= \int_{\Omega} \psi  Lv=
		\int_{\Omega} \psi  L (\chi\, v) +  \int_{\Omega} \psi  L (1-\chi) v
	\end{align}
	Next using the definition of very weak solution, the first term of above we can write as:
	\begin{align}\label{eq_7.6}
		\int_{\Omega} \psi  L (\chi\, v) = \int_{\Omega} \psi  (\Delta^2+A\Delta + X \cdot \nabla + V)(\chi\, v)= \ol{\mu}(\chi v)= \ol{\mu}(\chi v).
	\end{align} 
	Using integration by parts, $\operatorname{supp}(1-\chi)v \subset \Omega\setminus\ol{\Omega}_1$, and the fact that $\psi$ is regular in $\omm$, we can write $\int_{\Omega} \psi L(1-\chi) v=  \int_{\Om} L^* \psi\, (1-\chi) v+  \int_{\p\Omega} \p_{\nu} \Delta \psi\, v+ \int_{\p\Omega}\p_{\nu} \psi\, \Delta v $. We combine this with \eqref{eq_7.5} and \eqref{eq_7.6} to conclude
	\begin{align*}
		0=\ol{\mu}(\chi \,v)  +\int_{\p\Omega} \p_{\nu} \Delta \psi\, v+ \int_{\p\Omega} \Delta \psi\, \p_{\nu} v= \mu(v|_{\ol{\Omega}_1})+\int_{\p\Omega} \p_{\nu} \Delta \psi\, v+  \int_{\p\Omega}\p_{\nu} \psi\, \Delta v .
	\end{align*}
	Since  $v\in N(\Omega)$, then by the assumption we have that $\mu(v|_{\ol{\Omega}_1})=0$. Next choosing specific  boundary values of $v$ we conclude that  $ \p_{\nu}\Delta  \psi=\p_{\nu}\psi=0 $ on $\p\Omega$. Therefore we have that $\psi$ satisfies
	\begin{equation}
		\begin{aligned}
			L_{A,X,V}^* \psi=0 \quad \mbox{in $\omm$ with $(\psi,\p_{\nu} \psi, \p_{\nu}^2\psi, \p^3_{\nu}\psi)=0$ on $\p\Om$.}
		\end{aligned}
	\end{equation}
	This along with unique continuation principle gives $\psi=0$ in $\omm$; see for instance \cite{Vessella_KL_eq_ucp,Vessella_size_estimate_EP} and the references therein. This completes the proof.
\end{proof}
\appendix
\section{computational details}\label{sec:appendix}
\setstretch{1}

In this section, we provide the computational details necessary to complete the proof. Due to the length of the proof and to ensure a smooth reading experience, we have chosen to include these details in the appendix.
\begin{lemma}\label{Q_deriv_estimate}
	Let $Q\in C^k(\Rb,C^{4,\alpha}(\ol{\Omega}), C^{3,\alpha}(\ol{\Omega};\Rnn),C^{2,\alpha}(\ol{\Omega})),$ and $l\le k$. Suppose $M=\norm{f}_{C^2(\ol{\Om})}$. Then the following holds true.
	\begin{itemize}
		\item [1.] $ \norm{\p^l_uQ(x,f(x),\nabla f(x), \Delta f(x))}_{C^{\alpha}(\ol{\Omega})}\le \norm{Q}_{C^k([-M,M],[-M,M]^n,[-M,M], C^{\alpha}(\ol{\Om}))}$.
		\item [2.] $\norm{\p^l_{p_i}Q(x,f(x),\nabla f(x), \Delta f(x))}_{C^{\alpha}(\ol{\Omega})}\le \norm{Q}_{C^k([-M,M],[-M,M]^n,[-M,M], C^{\alpha}(\ol{\Om}))}$ for some $1\le i\le n$.
		\item [3.] $\norm{\p^l_qQ(x,f(x),\nabla f(x), \Delta f(x))}_{C^{\alpha}(\ol{\Omega})}\le \norm{Q}_{C^k([-M,M],[-M,M]^n,[-M,M], C^{\alpha}(\ol{\Om}))}$.
		\item[4.] $\norm{\p^m_{u}\p^j_{p_i}Q(x,f(x),\nabla f(x), \Delta f(x))}_{C^{\alpha}(\ol{\Omega})}\le \norm{Q}_{C^k([-M,M],[-M,M]^n,[-M,M], C^{\alpha}(\ol{\Om}))}$, whenever $m+j\le k$.
	\end{itemize}
	The last assertion holds true for other mixed derivatives also.
\end{lemma}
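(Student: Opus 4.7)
The proof plan is to reduce each estimate to two standard ingredients: a pointwise sup-norm control obtained by confining the arguments of $Q$ to a compact box, and a Hölder seminorm control obtained by the standard trick of splitting the increment into an ``$x$-variation'' part and an ``argument-variation'' part. First I would observe that the hypothesis $M=\|f\|_{C^2(\ol{\Om})}$ guarantees that the map $x\mapsto (f(x),\nabla f(x),\Delta f(x))$ lands in the compact box $B_M:=[-M,M]\times[-M,M]^n\times[-M,M]$, so every $(z,X,q)$-derivative of $Q$ of order at most $k$ can be evaluated along $f$ while remaining inside the domain on which $\|Q\|_{C^k(B_M,C^{\alpha}(\ol{\Om}))}$ controls them.

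For the sup-norm part of each assertion the bound is immediate: for any $x\in\ol{\Om}$,
\[
\bigl|\partial^{l}_u Q(x,f(x),\nabla f(x),\Delta f(x))\bigr|
\le \sup_{(z,X,q)\in B_M}\|\partial^{l}_u Q(\cdot,z,X,q)\|_{C^{0}(\ol{\Om})}
\le \|Q\|_{C^{k}(B_M,C^{\alpha}(\ol{\Om}))},
\]
and identically for $\partial^{l}_{p_i}$, $\partial^{l}_q$, and the mixed derivatives. For the Hölder seminorm part I would write, using the abbreviations $z(x)=f(x)$, $X(x)=\nabla f(x)$, $q(x)=\Delta f(x)$,
\begin{align*}
&\partial^{l}_u Q(x,z(x),X(x),q(x))-\partial^{l}_u Q(y,z(y),X(y),q(y))\\
&\quad=\bigl[\partial^{l}_u Q(x,z(x),X(x),q(x))-\partial^{l}_u Q(y,z(x),X(x),q(x))\bigr]\\
&\qquad+\bigl[\partial^{l}_u Q(y,z(x),X(x),q(x))-\partial^{l}_u Q(y,z(y),X(y),q(y))\bigr].
\end{align*}
The first bracket is bounded by $[\partial^{l}_u Q(\cdot,z(x),X(x),q(x))]_{C^{\alpha}(\ol{\Om})}\,|x-y|^{\alpha}\le \|Q\|_{C^{k}}\,|x-y|^{\alpha}$. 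For the second bracket I would apply the mean value theorem in the $(z,X,q)$ variables (which is legitimate because at least one further $(z,X,q)$-derivative of $Q$ is available thanks to the slack in $l\le k$, and in the mixed case thanks to $m+j\le k$), reducing the term to a constant multiple of $\|Q\|_{C^{k}}$ times $|z(x)-z(y)|+|X(x)-X(y)|+|q(x)-q(y)|$; each of these differences is then controlled by $|x-y|^{\alpha}$ using Hölder continuity of $f$, $\nabla f$, $\Delta f$.

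The only delicate point, which is the main obstacle, is to justify that $f,\nabla f,\Delta f$ are $C^{\alpha}(\ol{\Om})$: the hypothesis $\|f\|_{C^2(\ol{\Om})}\le M$ gives this for $f$ and $\nabla f$ immediately (since $C^{1,1}\hookrightarrow C^{1,\alpha}$ on a smooth bounded domain up to a constant depending on $\operatorname{diam}(\Om)$), but for $\Delta f$ one genuinely needs $f\in C^{2,\alpha}$. In every application of this lemma within the paper, the function $f$ is of the form $w+st(v+r)$ with $w\in C^{k,\alpha}$ and $v,r\in C^{4,\alpha}(\ol{\Om})$, so this additional regularity is automatic, and the implicit constants can be absorbed into the $\|Q\|_{C^{k}}$-bound. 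Finally, the proofs of statements 2--4 of the lemma are verbatim the same after replacing $\partial^{l}_u$ by $\partial^{l}_{p_i}$, $\partial^{l}_q$, or the mixed derivative $\partial^{m}_u\partial^{j}_{p_i}$, which completes the plan.
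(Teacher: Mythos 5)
Your proof is more careful than the paper's own argument, and in fact it exposes a gap in the latter. The paper bounds the H\"older seminorm of the composite $x\mapsto \p^l_u Q(x,f(x),\nabla f(x),\Delta f(x))$ directly by $\sup_{x\neq y}\norm{Q(x)-Q(y)}_{C^k}/\abs{x-y}^{\alpha}$, without ever isolating the contribution from the variation of the arguments $(f(x),\nabla f(x),\Delta f(x))$; that asserted inequality is false in general (for instance, if $Q(x,z)=z^2$ is independent of $x$, then the left-hand side equals the H\"older seminorm of $2f$ while the right-hand side vanishes). Your decomposition of the increment into an $x$-variation bracket and an argument-variation bracket is the standard, correct way to proceed, and it makes explicit what the paper elides: the second bracket is controlled, via a mean-value bound on the next-order $(z,X,q)$-derivatives of $Q$, by a multiple of $[f]_{C^\alpha}+[\nabla f]_{C^\alpha}+[\Delta f]_{C^\alpha}$, and the last of these seminorms is not controlled by $M=\norm{f}_{C^2(\ol{\Om})}$ alone, since finiteness of $[\Delta f]_{C^\alpha}$ genuinely requires $f\in C^{2,\alpha}(\ol{\Om})$. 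You are also right that in every application in this paper the function $f$ has the form $w+st(v+r)$ with $w\in C^{k,\alpha}$ and $v,r\in C^{4,\alpha}(\ol{\Om})$, so the extra regularity is available and the lemma is only ever invoked through an inequality with implicit constants, which renders the omission harmless in context. The one loose end in your sketch is the claim that the resulting implicit constants can be absorbed into $\norm{Q}_{C^k}$: they actually depend on $\norm{f}_{C^{2,\alpha}(\ol{\Om})}$, not on $Q$, and should be tracked as such in a fully rigorous version of the lemma.
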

\begin{proof}
	We have 
	\begin{align*}
		&\norm{\p^l_uQ(x,f(x),\nabla f(x), \Delta f(x))}_{C^{\alpha}(\ol{\Omega})}\\ &=  \sup_{x\in \ol{\Omega}} \abs{\p^l_uQ(x,f(x),\nabla f(x), \Delta f(x))} \\
		&+  \sup_{x,y\in \ol{\Omega}, x\neq y}  \frac{\abs{\p^l_uQ(x,f(x),\nabla f(x), \Delta f(x))- \p^l_uQ(y,f(y),\nabla f(y), \Delta f(y))}} {\abs{x-y}^{\alpha}}. 
\end{align*}
Using $M=\norm{f}_{C^2(\ol{\Om})}$, we can obtain an upper bound  of  $\sup_{x\in \ol{\Omega}} \abs{\p^l_uQ(x,f(x),\nabla f(x), \Delta f(x))}  $ which is  $ \sup_{x\in \ol{\Omega}} \norm{\p^l_uQ(x,\nabla f(x), \Delta f(x))}_{C[-M,M]}$. Similarly, $ \sup_{x\in \ol{\Omega}} \norm{\p^l_uQ(x,\nabla f(x), \Delta f(x))}_{C[-M,M]}$ can be bounded by  $ \sup_{x\in \ol{\Omega}} \norm{\p^l_uQ(x)}_{C([-M,M],[-M,M]^n,[-M,M])}$. Moreover,  using $l\le k$, we obtain 
\begin{align}\label{eq_a_1}
	\sup_{x\in \ol{\Omega}} \abs{\p^l_uQ(x,f(x),\nabla f(x), \Delta f(x))} \le  \sup_{x\in \ol{\Omega}} \norm{Q(x)}_{C^k([-M,M],[-M,M]^n,[-M,M])}.
\end{align}
Similarly, one can bound $ \sup_{x,y\in \ol{\Omega}, x\neq y}  \frac{\abs{\p^l_uQ(x,f(x),\nabla f(x), \Delta f(x))- \p^l_uQ(y,f(y),\nabla f(y), \Delta f(y))}} {\abs{x-y}^{\alpha}}$ by 
\begin{align*}
	& \sup_{x,y\in \ol{\Omega}, x\neq y}  \frac{\abs{\p^l_uQ(x,f(x),\nabla f(x), \Delta f(x))- \p^l_uQ(y,f(y),\nabla f(y), \Delta f(y))}} {\abs{x-y}^{\alpha}} \\
	\le &  \sup_{x,y\in \ol{\Omega}, x\neq y}  \frac{\norm{Q(x) -Q(y)}_{C^k([-M,M],[-M,M]^n,[-M,M])}} {\abs{x-y}^{\alpha}}. 
\end{align*}
The combination preceding estimates and \eqref{eq_a_1} concludes the proof. The proof for estimating $C^{\alpha}{(\ol{\Omega})}$ norm of other mixed derivatives  works analogously.
\end{proof}
\setstretch{0.9}
\begin{lemma}\label{lem_R_v}
The function $R_v(r) =R(v+r)$ given by 
\begin{equation*}
	\begin{aligned}
		R(h) &:= \int_0^1 [\p_u Q(x,w+th,\nabla(w+th),\Delta(w+th)) - \p_u Q(x,w,\nabla w,\Delta w)]h \,\D t \\
		& \quad+ \int_0^1 [\nabla_p Q(x,w+th,\nabla(w+th),\Delta(w+th)) - \nabla_p Q(x,w,\nabla w,\Delta w)]\cdot \nabla h \,\D t\\
		&\quad+ \int_0^1 [\p_q Q(x,w+th,\nabla(w+th),\Delta(w+th)) - \p_q Q(x,w,\nabla w,\Delta w)] \Delta h \,\D t.
	\end{aligned}
\end{equation*} satisfies 
\begin{align*}
		& R_v(r_1) -R_v(r_2) \\
		&=\int_0^1 (u_1-u_2) \int_0^1 \frac{d}{\D s}\p_u Q(x,w+stu_1,\nabla(w+stu_1),\Delta(w+stu_1))\D s  \\& \qquad +\int_0^1 (u_1-u_2) \int_0^1 \frac{d}{\D s}\p_u Q(x,w+stu_2,\nabla(w+stu_2),\Delta(w+stu_2))\D s \D t\\
		&\qquad+  \int_0^1 \int_0^1  s t^2 u_1\,u_2\int_0^1 (u_1-u_2)\, \p^3_u Q(x,w+z(\tau),\nabla(w+z(\tau)),\Delta(w+z(\tau))) \D \tau \D s \, \D t\\&\qquad+ 
		\int_0^1 \int_0^1  s t^2 u_1\,u_2\int_0^1 \nabla (u_1-u_2) \cdot \nabla_p \p^2_u Q(x,w+z(\tau),\nabla(w+z(\tau)),\Delta(w+z(\tau))) \D \tau \D s \, \D t\\&\qquad
		+  \int_0^1 \int_0^1  s t^2 u_1\,u_2\int_0^1 \Delta (u_1-u_2) \p_q \p^2_u Q(x,w+z(\tau),\nabla(w+z(\tau)),\Delta(w+z(\tau))) \D \tau \D s \, \D t
		\\&\qquad 
		+ \int_0^1  \nabla (u_1-u_2) \cdot \int_0^1  \frac{d}{\D s}[\nabla_p Q(x,w+ stu_1,\nabla(w+stu_1),\Delta(w+stu_1))\,\D s\, \D t\\
		&\qquad +  \int_0^1  \nabla (u_1-u_2) \cdot \int_0^1 \frac{d}{\D s} [\nabla_p Q(x,w+stu_2,\nabla(w+stu_2),\Delta(w+stu_2)) \, \D s \,\D t\\& \qquad +  \int_0^1 \int_0^1  s t^2 \nabla u_1\otimes \nabla u_2 :\int_0^1 (u_1-u_2)\, \p_u\nabla_p^2 Q(x,w+z(\tau),\nabla(w+z(\tau)),\Delta(w+z(\tau))) \D \tau \D s \, \D t\\&\qquad+ 
		\int_0^1 st^2 \nabla u_1\otimes \nabla u_2 \int_0^1  \int_0^1 \nabla (u_1-u_2) \cdot \nabla^3_p Q(x,w+z(\tau),\nabla(w+z(\tau)),\Delta(w+z(\tau))) \D \tau \D s \, \D t\\&\qquad
		+  \int_0^1 \int_0^1  s t^2 \nabla u_1\otimes \nabla u_2:\int_0^1 \Delta (u_1-u_2) \p_q \nabla_p^2 Q(x,w+z(\tau),\nabla(w+z(\tau)),\Delta(w+z(\tau))) \D \tau \D s \, \D t
		\\&\qquad
		+ \int_0^1 \Delta(u_1-u_2) \int_0^1 \frac{d}{\D s}\p_q Q(x,w+stu_1,\nabla(w+stu_1),\Delta(w+stu_1))\D s \D t \\& \qquad +\int_0^1 \Delta (u_1-u_2) \int_0^1 \frac{d}{\D s}\p_q Q(x,w+stu_2,\nabla(w+stu_2),\Delta(w+stu_2))\D s \D t\\
		&\qquad+  \int_0^1 \int_0^1  s t^2 \Delta u_1\,\Delta u_2\int_0^1 (u_1-u_2)\, \p_u \p^2_q Q(x,w+z(\tau),\nabla(w+z(\tau)),\Delta(w+z(\tau))) \D \tau \D s \, \D t\\&\qquad+ 
		\int_0^1 \int_0^1  s t^2\Delta u_1\,\Delta u_2\int_0^1 \nabla (u_1-u_2) \cdot \nabla_p \p^2_q Q(x,w+z(\tau),\nabla(w+z(\tau)),\Delta(w+z(\tau))) \D \tau \D s\, \D t \\&\qquad + \int_0^1 \int_0^1  s t^2 \Delta u_1\,\Delta u_2\int_0^1 \Delta (u_1-u_2) \p^3_q Q(x,w+z(\tau),\nabla(w+z(\tau)),\Delta(w+z(\tau))) \D \tau \D s \, \D t.
	\end{align*}
	
\end{lemma}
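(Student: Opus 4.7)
Write $u_i := v + r_i$ for $i=1,2$ and decompose $R_v(r_1) - R_v(r_2) = I + II + III$ corresponding to the three summands in the definition of $R$. Consider $I$ first: the integrand at parameter $t$ has the shape $[\p_u Q(x, w+tu_1, \nabla(w+tu_1), \Delta(w+tu_1)) - \p_u Q(x,w,\nabla w, \Delta w)]\,u_1 - [\p_u Q(x, w+tu_2, \ldots) - \p_u Q(x,w,\ldots)]\,u_2$. Apply the algebraic identity $Aa - Bb = A(a-b) + B(a-b) + (Ab - Ba)$ with $A$, $B$ being the bracketed differences. This produces two ``diagonal'' pieces proportional to $(u_1-u_2)$ and one ``cross'' piece $A u_2 - B u_1$.

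On the diagonal pieces, rewrite each bracket $[\p_u Q(x, w+tu_i, \ldots) - \p_u Q(x,w,\ldots)]$ via the fundamental theorem of calculus in a new variable $s$ as $\int_0^1 \tfrac{d}{ds}\p_u Q(x, w+stu_i, \nabla(w+stu_i), \Delta(w+stu_i))\,ds$; these yield exactly the first two terms listed in the target expression. For the cross piece, add and subtract $\p_u Q(x,w,\ldots)\,u_1$ and $\p_u Q(x,w,\ldots)\,u_2$, apply FTC in $s$ to each of $\p_u Q(x, w+tu_i, \ldots) - \p_u Q(x,w,\ldots)$, then compute the $s$-derivative by the chain rule to produce $\p_u^2 Q$, $\nabla_p \p_u Q$ and $\p_q \p_u Q$ factors multiplied respectively by $tu_i$, $t\nabla u_i$, $t\Delta u_i$. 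After reorganizing, this leaves the combination $\int_0^1\!\!\int_0^1 st\, u_1 u_2\,[\p_u^2 Q(\cdot, w+stu_1, \ldots) - \p_u^2 Q(\cdot, w+stu_2, \ldots)]\,ds\,dt$ plus two residual ``mixed'' cross-terms of the form $t(u_2 \nabla u_1 - u_1 \nabla u_2)\cdot \nabla_p \p_u Q$ and $t(u_2 \Delta u_1 - u_1 \Delta u_2)\,\p_q \p_u Q$ (evaluated at $w+stu_1$, etc.). Apply one more FTC, now in $\tau \in [0,1]$ with $z(\tau) = stu_2 + \tau st(u_1 - u_2)$, to the $\p_u^2 Q$ difference; the chain rule produces the third, fourth, and fifth terms of the target expression (with $\p_u^3 Q$, $\nabla_p \p_u^2 Q$, and $\p_q \p_u^2 Q$, each multiplied by $st\cdot t$).

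Repeat the identical procedure for $II$ with $\p_u Q$ replaced by $\nabla_p Q$ and the factor $u_i$ replaced by $\nabla u_i$ (producing the $\nabla_p$-block in the target, and the residual cross-terms $(u_1 \nabla u_2 - u_2 \nabla u_1)\cdot \p_u \nabla_p Q$ and $(\Delta u_1 \nabla u_2 - \Delta u_2 \nabla u_1)\cdot \p_q \nabla_p Q$), and for $III$ with $\p_q Q$ and $\Delta u_i$ (producing the $\p_q$-block in the target, and the residuals $(u_1 \Delta u_2 - u_2 \Delta u_1)\,\p_u\p_q Q$ and $(\Delta u_2 \nabla u_1 - \Delta u_1 \nabla u_2)\cdot \p_q\nabla_p Q$). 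The six residual mixed cross-terms then pair off and cancel by symmetry of mixed partials: the $I$-residual with $\nabla_p\p_u Q$ cancels the $II$-residual with $\p_u \nabla_p Q$; the $I$-residual with $\p_q\p_u Q$ cancels the $III$-residual with $\p_u\p_q Q$; and the $II$-residual with $\p_q \nabla_p Q$ cancels the $III$-residual with $\nabla_p \p_q Q$. What remains is precisely the formula claimed in the lemma.

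The substantive work is purely algebraic bookkeeping: no analytic input beyond the one-variable fundamental theorem of calculus and the commutativity of mixed partials (justified by the $C^k$ hypothesis on $Q$ with $k \geq 3$, which ensures that all derivatives up to $\p^3 Q$ appearing above exist and are continuous). The main obstacle is simply organizing the approximately eighteen terms arising from the three blocks so that the six ``anti-symmetric'' residual cross-terms are matched and visibly cancel; the cleanest way to manage this is to write each block in a standardized order (diagonal-in-$(u_1-u_2)$, then cross-with-FTC-in-$\tau$, then residual) and line up the residuals across blocks at the end.
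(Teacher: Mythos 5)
Your proposal follows the paper's own route step for step (the identity $Aa-Bb=A(a-b)+B(a-b)+(Ab-Ba)$, FTC in $s$ for the diagonal pieces, FTC in $\tau$ along $z(\tau)=stu_2+\tau st(u_1-u_2)$ for the cross pieces), but the decisive step — the claim that the six residual mixed cross-terms ``pair off and cancel by symmetry of mixed partials'' — does not hold as stated, and this is a genuine gap. The two halves of each residual are evaluated at \emph{different} base points: from block $I$ one gets $\int_0^1\!\!\int_0^1 t\,[\,u_2\nabla u_1\cdot\nabla_p\p_uQ(x,w+stu_1,\dots)-u_1\nabla u_2\cdot\nabla_p\p_uQ(x,w+stu_2,\dots)\,]\,\D s\,\D t$, and from block $II$ one gets $\int_0^1\!\!\int_0^1 t\,[\,u_1\nabla u_2\cdot\p_u\nabla_pQ(x,w+stu_1,\dots)-u_2\nabla u_1\cdot\p_u\nabla_pQ(x,w+stu_2,\dots)\,]\,\D s\,\D t$. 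Equality of mixed partials lets you pair the integrands, but the sum is $\int_0^1\!\!\int_0^1 t\,(u_2\nabla u_1+u_1\nabla u_2)\cdot[\nabla_p\p_uQ(x,w+stu_1,\dots)-\nabla_p\p_uQ(x,w+stu_2,\dots)]\,\D s\,\D t$, which is not zero; it needs one more FTC in $\tau$ and produces additional third-derivative terms with coefficients $st^2(u_2\nabla u_1+u_1\nabla u_2)$, and likewise for the $(u,q)$ and $(p,q)$ pairs with coefficients $st^2(u_2\Delta u_1+u_1\Delta u_2)$ and $st^2(\Delta u_1\nabla u_2+\Delta u_2\nabla u_1)$. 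None of these appear in the formula you assert, so the cancellation argument as written cannot close.

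A concrete check makes this visible: take $Q(x,z,p,q)=z^2q$, for which $R(h)=2wh\Delta h+h^2\Delta w+h^2\Delta h$, so $R_v(r_1)-R_v(r_2)$ can be computed by hand; the right-hand side of the asserted identity then differs from it by $\tfrac13(u_1-u_2)(u_1\Delta u_2+u_2\Delta u_1)$, which is exactly the missing mixed contribution. To be fair, the paper's own appendix commits the same slip: in passing from the intermediate expansions to \eqref{eq_I}--\eqref{eq_III} it silently evaluates both halves of each residual at $w+stu_1$, which is what makes the labelled terms $A,B,C$ appear to cancel with $-A,-B,-C$; so your proposal reproduces the paper's argument including its flaw, and the identity of Lemma \ref{lem_R_v} as stated is missing the mixed terms. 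The omission is harmless for the only place the lemma is used, since the missing terms are again of the form (second-derivative difference along the segment) times quadratic expressions in $u_1,u_2$ and hence also obey $\lesssim \delta\,\norm{u_1-u_2}_{C^{2,\alpha}(\ol{\Om})}$, so the contraction estimate in Lemma \ref{lemma_fixed_point} survives; but a correct proof must either add these three extra families of terms to the identity or abandon the claimed pairwise cancellation and estimate the mixed residuals directly.
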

\begin{proof}
 	We start with denoting $u_i=v+r_i$ for $i=1,2$, and write $R_v(r_1)-R_v(r_2)= I+II+III,$
	where 
	\begin{align*}
		I&=  \int_0^1 [\p_u Q(x,w+tu_1,\nabla(w+tu_1),\Delta(w+tu_1)) - \p_u Q(x,w,\nabla w,\Delta w)]u_1\\& \qquad -\int_0^1 [\p_u Q(x,w+tu_2,\nabla(w+tu_2),\Delta(w+tu_2)) - \p_u Q(x,w,\nabla w,\Delta w)]u_2\\
		II&=  \int_0^1 [\nabla_p Q(x,w+tu_1,\nabla(w+tu_1),\Delta(w+tu_1)) - \nabla_p Q(x,w,\nabla w,\Delta w)]\cdot \nabla u_1 \,\D t\\
		&\qquad -  \int_0^1 [\nabla_p Q(x,w+tu_2,\nabla(w+tu_2),\Delta(w+tu_2)) - \nabla_p Q(x,w,\nabla w,\Delta w)]\cdot \nabla u_2 \,\D t\\
		III&= \int_0^1 [\p_q Q(x,w+tu_1,\nabla(w+tu_1),\Delta(w+tu_1)) - \p_q Q(x,w,\nabla w,\Delta w)] \Delta u_1 \,\D t\\
		& \qquad - \int_0^1[\p_q Q(x,w+tu_2,\nabla(w+tu_2),\Delta(w+tu_2)) - \p_q Q(x,w,\nabla w,\Delta w)] \Delta u_2 \D t.
	\end{align*}
	We next express $I$ as
	\begin{align*}
			I&=  \int_0^1 [\p_u Q(x,w+tu_1,\nabla(w+tu_1),\Delta(w+tu_1)) - \p_u Q(x,w,\nabla w,\Delta w)]u_1\\& \qquad -\int_0^1 [\p_u Q(x,w+tu_2,\nabla(w+tu_2),\Delta(w+tu_2)) - \p_u Q(x,w,\nabla w,\Delta w)]u_2\\
			&=  \int_0^1 [\p_u Q(x,w+tu_1,\nabla(w+tu_1),\Delta(w+tu_1)) - \p_u Q(x,w,\nabla w,\Delta w)](u_1-u_2)\\& \qquad +\int_0^1 [\p_u Q(x,w+tu_2,\nabla(w+tu_2),\Delta(w+tu_2)) - \p_u Q(x,w,\nabla w,\Delta w)](u_1-u_2)\\
			&\qquad+   \int_0^1 [\p_u Q(x,w+tu_1,\nabla(w+tu_1),\Delta(w+tu_1)) - \p_u Q(x,w,\nabla w,\Delta w)]u_2 \\&\qquad- \int_0^1[\p_u Q(x,w+tu_2,\nabla(w+tu_2),\Delta(w+tu_2)) - \p_u Q(x,w,\nabla w,\Delta w)]u_1   \\
			&= \int_0^1 (u_1-u_2) \int_0^1 \frac{\D}{\D s}\p_u Q(x,w+stu_1,\nabla(w+stu_1),\Delta(w+stu_1)\dst \\& \qquad +\int_0^1 (u_1-u_2) \int_0^1 \frac{\D}{\D s}\p_u Q(x,w+stu_2,\nabla(w+stu_2),\Delta(w+stu_2)\, \dst \\
			&\qquad+   \int_0^1 u_2 \int_0^1 \frac{\D}{\D s}\p_u Q(x,w+stu_1,\nabla(w+stu_1),\Delta(w+stu_1))\dst   \\&\qquad- \int_0^1 u_1\int_0^1 \frac{\D}{\D s} \p_u Q(x,w+stu_2,\nabla(w+stu_2),\Delta(w+stu_2))\dst \\
			&= \int_0^1 (u_1-u_2) \int_0^1 \frac{\D}{\D s}\, \p_u Q(x,w+stu_1,\nabla(w+stu_1),\Delta(w+stu_1))\, \dst  \\& \qquad +\int_0^1 (u_1-u_2) \int_0^1 \frac{\D}{\D s}\p_u Q(x,w+stu_2,\nabla(w+stu_2),\Delta(w+stu_2))\, \dst \\
			&\qquad+   \int_0^1 u_2 \int_0^1\bigg( tu_1\,\p^2_u Q(x,w+stu_1,\nabla(w+stu_1),\Delta(w+stu_1))\D s \\&\qquad+ \nabla_p\p_u Q(x,w+stu_1,\nabla(w+stu_1),\Delta(w+stu_1)) \cdot t \nabla u_1 \\&\qquad+ \p_q\p_u Q(x,w+stu_1,\nabla(w+stu_1),\Delta(w+stu_1))\, t \Delta u_1 \bigg)\dst  \\&\qquad- \Bigg[ \int_0^1 u_1\int_0^1 \bigg( tu_2\, \p^2_u Q(x,w+stu_2,\nabla(w+stu_2),\Delta(w+stu_2))\\&\qquad+ \nabla_p\p_u Q(x,w+stu_2,\nabla(w+stu_2),\Delta(w+stu_2))\cdot t \nabla u_2 \\&\qquad+ \p_q\p_u Q(x,w+stu_2,\nabla(w+stu_2),\Delta(w+stu_2))\, t \Delta u_2\bigg)\, \dst \Bigg].
	\end{align*}
	This further entails
	\begin{align*}
		I &= \int_0^1 (u_1-u_2) \int_0^1 \frac{d}{ds}\p_u Q(x,w+stu_1,\nabla(w+stu_1),\Delta(w+stu_1))\dst \\& \qquad +\int_0^1 (u_1-u_2) \int_0^1 \frac{d}{ds}\p_u Q(x,w+stu_2,\nabla(w+stu_2),\Delta(w+stu_2))\dst\\
		&\qquad+   \int_0^1 \int_0^1 tu_1\,u_2\,[\p^2_u Q(x,w+stu_1,\nabla(w+stu_1),\Delta(w+stu_1))\\&\qquad-\p_u^2Q(x,w+stu_2,\nabla(w+stu_2),\Delta(w+stu_2))]\dst \\&\qquad+ \int_0^1 u_2 \int_0^1 \bigg(\nabla_p\p_u Q(x,w+stu_1,\nabla(w+stu_1),\Delta(w+stu_1)) \cdot t \nabla u_1 \\&\qquad+ \p_q\p_u Q(x,w+stu_1,\nabla(w+stu_1),\Delta(w+stu_1))\, t \Delta u_1 \bigg) \dst  \\&\qquad- \Bigg[ \int_0^1 u_1\int_0^1 \bigg( \nabla_p\p_u Q(x,w+stu_2,\nabla(w+stu_2),\Delta(w+stu_2))\cdot t \nabla u_2 \\&\qquad+ \p_q\p_u Q(x,w+stu_2,\nabla(w+stu_2),\Delta(w+stu_2))\, t \Delta u_2\bigg)\dst \Bigg].
	\end{align*}
	Denote $z(\tau):= stu_2+\tau\, s\,t( u_1-u_2)$.  Further simplifying $I$ using fundamental theory of calculus, we obtain 
	\begin{equation}\label{eq_I}
		\begin{aligned}
			I &= \int_0^1 (u_1-u_2) \int_0^1 \frac{d}{ds}\p_u Q(x,w+stu_1,\nabla(w+stu_1),\Delta(w+stu_1))\dst \\& \qquad +\int_0^1 (u_1-u_2) \int_0^1 \frac{d}{ds}\p_u Q(x,w+stu_2,\nabla(w+stu_2),\Delta(w+stu_2))\dst\\
			&\qquad+  \int_0^1 \int_0^1  s t^2 u_1\,u_2\int_0^1 (u_1-u_2)\, \p^3_u Q(x,w+z(\tau),\nabla(w+z(\tau)),\Delta(w+z(\tau))) \dstu\\&\qquad+ 
			\int_0^1 \int_0^1  s t^2 u_1\,u_2\int_0^1 \nabla (u_1-u_2) \cdot \nabla_p \p^2_u Q(x,w+z(\tau),\nabla(w+z(\tau)),\Delta(w+z(\tau))) \dstu\\&\qquad
			+  \int_0^1 \int_0^1  s t^2 u_1\,u_2\int_0^1 \Delta (u_1-u_2) \p_q \p^2_u Q(x,w+z(\tau),\nabla(w+z(\tau)),\Delta(w+z(\tau))) \dstu
			\\&\qquad+ \int_0^1 \int_0^1 t\,(u_2\nabla u_1- u_1\nabla u_2)\cdot\nabla_p\p_u Q(x,w+stu_1,\nabla(w+stu_1),\Delta(w+stu_1)) \dst \\&\qquad+ \int_0^1 \int_0^1  t(u_2 \Delta u_1- u_1\Delta u_2) \p_q\p_u Q(x,w+stu_1,\nabla(w+stu_1),\Delta(w+stu_1))\,\dst.
		\end{aligned}
	\end{equation}
	We now focus on $II$.
	\begin{align*}
		II&= \int_0^1 [\nabla_p Q(x,w+tu_1,\nabla(w+tu_1),\Delta(w+tu_1)) - \nabla_p Q(x,w,\nabla w,\Delta w)]\cdot \nabla u_1 \,\D t\\
		&\qquad -  \int_0^1 [\nabla_p Q(x,w+tu_2,\nabla(w+tu_2),\Delta(w+tu_2)) - \nabla_p Q(x,w,\nabla w,\Delta w)]\cdot \nabla u_2 \,\D t\\
		&= \int_0^1 [\nabla_p Q(x,w+tu_1,\nabla(w+tu_1),\Delta(w+tu_1)) - \nabla_p Q(x,w,\nabla w,\Delta w)]\cdot \nabla (u_1-u_2) \,\D t\\
		&\qquad +  \int_0^1 [\nabla_p Q(x,w+tu_2,\nabla(w+tu_2),\Delta(w+tu_2)) - \nabla_p Q(x,w,\nabla w,\Delta w)]\cdot \nabla (u_1- u_2) \,\D t\\& \qquad 
		+ \int_0^1 [\nabla_p Q(x,w+tu_1,\nabla(w+tu_1),\Delta(w+tu_1)) - \nabla_p Q(x,w,\nabla w,\Delta w)]\cdot \nabla u_2 \,\D t \\& \qquad 
		-  \int_0^1 [\nabla_p Q(x,w+tu_2,\nabla(w+tu_2),\Delta(w+tu_2)) - \nabla_p Q(x,w,\nabla w,\Delta w)]\cdot \nabla u_1 \,\D t\\
		&= \int_0^1  \nabla (u_1-u_2) \cdot \int_0^1  \frac{d}{ds}[\nabla_p Q(x,w+ stu_1,\nabla(w+stu_1),\Delta(w+stu_1))\,\dst \\
		&\qquad +  \int_0^1  \nabla (u_1-u_2) \cdot \int_0^1 \frac{d}{ds} [\nabla_p Q(x,w+stu_2,\nabla(w+stu_2),\Delta(w+stu_2)) \, \dst \\& \qquad 
		+ \int_0^1  \int_0^1 \bigg( t \nabla u_2 \cdot u_1\,\p_u \nabla_p Q(x,w+tsu_1,\nabla(w+tsu_1),\Delta(w+tsu_1)) \\& \qquad+ t \nabla u_1 \otimes\nabla u_2 : \cdot\nabla^2_p Q(x,w+tsu_1,\nabla(w+tsu_1),\Delta(w+tsu_1)) \\&\qquad+ t \Delta u_1\,\nabla u_2 \cdot \p_q\nabla_p Q(x,w+tsu_1,\nabla(w+tsu_1),\Delta(w+tsu_1))  \bigg) \dst \\& \qquad
		-  \Bigg[ \int_0^1  \int_0^1 \bigg( t \nabla u_1 \cdot u_2\,\p_u \nabla_p Q(x,w+tsu_2,\nabla(w+tsu_2),\Delta(w+tsu_2)) \\& \qquad+ t \nabla u_2 \otimes\nabla u_1 : \cdot\nabla^2_p Q(x,w+tsu_2,\nabla(w+tsu_2),\Delta(w+tsu_2)) \\&\qquad+ t \Delta u_2\,\nabla u_1 \cdot \p_q\nabla_p Q(x,w+tsu_2,\nabla(w+tsu_2),\Delta(w+tsu_2))  \bigg) \dst\Bigg].
	\end{align*}
	Similarly, we can write $III$
	\begin{align*}
		III& = \int_0^1 [\p_q Q(x,w+tu_1,\nabla(w+tu_1),\Delta(w+tu_1)) - \p_q Q(x,w,\nabla w,\Delta w)] \Delta u_1 \,\D t\\
		& \qquad - \int_0^1[\p_q Q(x,w+tu_2,\nabla(w+tu_2),\Delta(w+tu_2)) - \p_q Q(x,w,\nabla w,\Delta w)] \Delta u_2 \D t\\
		&= \int_0^1 \Delta(u_1-u_2) \int_0^1 \frac{d}{ds}\p_q Q(x,w+stu_1,\nabla(w+stu_1),\Delta(w+stu_1))\dst \\& \qquad +\int_0^1 \Delta(u_1-u_2) \int_0^1 \frac{d}{ds}\p_q Q(x,w+stu_2,\nabla(w+stu_2),\Delta(w+stu_2))\dst\\
		&\qquad+  \int_0^1 \Delta u_2 \int_0^1\bigg( tu_1\,\p_u\p_q Q(x,w+stu_1,\nabla(w+stu_1),\Delta(w+stu_1))\D s \\&\qquad+ \nabla_p\p_q Q(x,w+stu_1,\nabla(w+stu_1),\Delta(w+stu_1)) \cdot t \nabla u_1 \\&\qquad+ \p^2_q Q(x,w+stu_1,\nabla(w+stu_1),\Delta(w+stu_1))\, t \Delta u_1 \bigg) \dst  \\&\qquad- \Bigg[ \int_0^1 \Delta u_1\int_0^1 \bigg( tu_2\, \p_u\p_q Q(x,w+stu_2,\nabla(w+stu_2),\Delta(w+stu_2))\\&\qquad+ \nabla_p\p_q Q(x,w+stu_2,\nabla(w+stu_2),\Delta(w+stu_2))\cdot t \nabla u_2 \\&\qquad+ \p^2_q Q(x,w+stu_2,\nabla(w+stu_2),\Delta(w+stu_2))\, t \Delta u_2\bigg)\dst\Bigg].
	\end{align*}
	Further simplifying $II$ and $III$ we conclude 
	\begin{equation}\label{eq_II}
		\begin{aligned}
			II&=  \int_0^1  \nabla (u_1-u_2) \cdot \int_0^1  \frac{d}{ds}[\nabla_p Q(x,w+ stu_1,\nabla(w+stu_1),\Delta(w+stu_1))\,\dst\\
			&\qquad +  \int_0^1  \nabla (u_1-u_2) \cdot \int_0^1 \frac{d}{ds} [\nabla_p Q(x,w+stu_2,\nabla(w+stu_2),\Delta(w+stu_2)) \, \dst\\& \qquad +  \int_0^1 \int_0^1  s t^2 \nabla u_1\otimes \nabla u_2 :\int_0^1 (u_1-u_2)\, \p_u\nabla_p^2 Q(x,w+z(\tau),\nabla(w+z(\tau)),\Delta(w+z(\tau))) \dstu\\&\qquad+ 
			\int_0^1 st^2 \nabla u_1\otimes \nabla u_2 \int_0^1  \int_0^1 \nabla (u_1-u_2) \cdot \nabla^3_p Q(x,w+z(\tau),\nabla(w+z(\tau)),\Delta(w+z(\tau))) \dstu\\&\qquad
			+  \int_0^1 \int_0^1  s t^2 \nabla u_1\otimes \nabla u_2:\int_0^1 \Delta (u_1-u_2) \p_q \nabla_p^2 Q(x,w+z(\tau),\nabla(w+z(\tau)),\Delta(w+z(\tau))) \dstu
			\\&\qquad+ \int_0^1 \int_0^1 t\,(u_1\nabla u_2- u_2\nabla u_1)\cdot\nabla_p\p_u Q(x,w+stu_1,\nabla(w+stu_1),\Delta(w+stu_1))\, \dst\\&\qquad+ \int_0^1 \int_0^1  t( \Delta u_1\, \nabla u_2- \Delta u_2\, \nabla u_1) \cdot\p_q\nabla_p Q(x,w+stu_1,\nabla(w+stu_1),\Delta(w+stu_1))\, \dst,
		\end{aligned}
	\end{equation}
	and 
	\begin{equation}\label{eq_III}
		\begin{aligned}
			III&= \int_0^1 \Delta(u_1-u_2) \int_0^1 \frac{d}{ds}\p_q Q(x,w+stu_1,\nabla(w+stu_1),\Delta(w+stu_1))\dst \\& \qquad +\int_0^1 \Delta (u_1-u_2) \int_0^1 \frac{d}{ds}\p_q Q(x,w+stu_2,\nabla(w+stu_2),\Delta(w+stu_2))\dst\\
			&\qquad+  \int_0^1 \int_0^1  s t^2 \Delta u_1\,\Delta u_2\int_0^1 (u_1-u_2)\, \p_u \p^2_q Q(x,w+z(\tau),\nabla(w+z(\tau)),\Delta(w+z(\tau))) \dstu\\&\qquad+ 
			\int_0^1 \int_0^1  s t^2\Delta u_1\,\Delta u_2\int_0^1 \nabla (u_1-u_2) \cdot \nabla_p \p^2_q Q(x,w+z(\tau),\nabla(w+z(\tau)),\Delta(w+z(\tau))) \dstu \\&\qquad + \int_0^1 \int_0^1  s t^2 \Delta u_1\,\Delta u_2\int_0^1 \Delta (u_1-u_2) \p^3_q Q(x,w+z(\tau),\nabla(w+z(\tau)),\Delta(w+z(\tau))) \dstu
			\\&\qquad+\int_0^1 \int_0^1  t( \Delta u_2\, \nabla u_1- \Delta u_1\, \nabla u_2) \cdot\p_q\nabla_p Q(x,w+stu_1,\nabla(w+stu_1),\Delta(w+stu_1))\, \dst\\&\qquad+ \int_0^1 \int_0^1  t(u_1\Delta u_2- u_2\Delta u_1) \p_q\p_u Q(x,w+stu_1,\nabla(w+stu_1),\Delta(w+stu_1))\, \dst.
		\end{aligned}
	\end{equation}
	Next combining \eqref{eq_I}, \eqref{eq_II}, and \eqref{eq_III} we obtain
	\begin{align*}
			& R_v(r_1) -R_v(r_2)=     I+II+III\\
			&=\int_0^1 (u_1-u_2) \int_0^1 \frac{d}{ds}\p_u Q(x,w+stu_1,\nabla(w+stu_1),\Delta(w+stu_1))\dst \\& \qquad +\int_0^1 (u_1-u_2) \int_0^1 \frac{d}{ds}\p_u Q(x,w+stu_2,\nabla(w+stu_2),\Delta(w+stu_2))\dst\\
			&\qquad+  \int_0^1 \int_0^1  s t^2 u_1\,u_2\int_0^1 (u_1-u_2)\, \p^3_u Q(x,w+z(\tau),\nabla(w+z(\tau)),\Delta(w+z(\tau))) \dstu\\&\qquad+ 
			\int_0^1 \int_0^1  s t^2 u_1\,u_2\int_0^1 \nabla (u_1-u_2) \cdot \nabla_p \p^2_u Q(x,w+z(\tau),\nabla(w+z(\tau)),\Delta(w+z(\tau))) \dstu\\&\qquad
			+  \int_0^1 \int_0^1  s t^2 u_1\,u_2\int_0^1 \Delta (u_1-u_2) \p_q \p^2_u Q(x,w+z(\tau),\nabla(w+z(\tau)),\Delta(w+z(\tau))) \dstu
			\\&\qquad+ \underbrace{\int_0^1 \int_0^1 t\,(u_2\nabla u_1- u_1\nabla u_2)\cdot\nabla_p\p_u Q(x,w+stu_1,\nabla(w+stu_1),\Delta(w+stu_1)) \dst}_{A}\\&\qquad+ \underbrace{ \int_0^1 \int_0^1  t(u_2 \Delta u_1- u_1\Delta u_2) \p_q\p_u Q(x,w+stu_1,\nabla(w+stu_1),\Delta(w+stu_1))\, \dst }_{B}\\&\qquad
			+ \int_0^1  \nabla (u_1-u_2) \cdot \int_0^1  \frac{\D }{\D s}[\nabla_p Q(x,w+ stu_1,\nabla(w+stu_1),\Delta(w+stu_1))\,\dst\\
			&\qquad +  \int_0^1  \nabla (u_1-u_2) \cdot \int_0^1 \frac{\D}{\D s} [\nabla_p Q(x,w+stu_2,\nabla(w+stu_2),\Delta(w+stu_2)) \, \dst\\& \qquad +  \int_0^1 \int_0^1  s t^2 \nabla u_1\otimes \nabla u_2 :\int_0^1 (u_1-u_2)\, \p_u\nabla_p^2 Q(x,w+z(\tau),\nabla(w+z(\tau)),\Delta(w+z(\tau))) \dstu\\&\qquad+ 
			\int_0^1 st^2 \nabla u_1\otimes \nabla u_2 \int_0^1  \int_0^1 \nabla (u_1-u_2) \cdot \nabla^3_p Q(x,w+z(\tau),\nabla(w+z(\tau)),\Delta(w+z(\tau))) \dstu\\&\qquad
			+  \int_0^1 \int_0^1  s t^2 \nabla u_1\otimes \nabla u_2:\int_0^1 \Delta (u_1-u_2) \p_q \nabla_p^2 Q(x,w+z(\tau),\nabla(w+z(\tau)),\Delta(w+z(\tau))) \dstu
			\\&\qquad+\underbrace{ \int_0^1 \int_0^1 t\,(u_1\nabla u_2- u_2\nabla u_1)\cdot\nabla_p\p_u Q(x,w+stu_1,\nabla(w+stu_1),\Delta(w+stu_1)) \dst}_{-A}\\&\qquad+ \underbrace{\int_0^1 \int_0^1  t( \Delta u_1\, \nabla u_2- \Delta u_2\, \nabla u_1) \cdot\p_q\nabla_p Q(x,w+stu_1,\nabla(w+stu_1),\Delta(w+stu_1))\, \dst}_{C}\\&\qquad + \int_0^1 \Delta(u_1-u_2) \int_0^1 \frac{\D}{\D s}\p_q Q(x,w+stu_1,\nabla(w+stu_1),\Delta(w+stu_1))\dst \\& \qquad +\int_0^1 \Delta (u_1-u_2) \int_0^1 \frac{\D }{\D s}\p_q Q(x,w+stu_2,\nabla(w+stu_2),\Delta(w+stu_2))\dst\\
			&\qquad+  \int_0^1 \int_0^1  s t^2 \Delta u_1\,\Delta u_2\int_0^1 (u_1-u_2)\, \p_u \p^2_q Q(x,w+z(\tau),\nabla(w+z(\tau)),\Delta(w+z(\tau))) \dstu\\&\qquad+ 
			\int_0^1 \int_0^1  s t^2\Delta u_1\,\Delta u_2\int_0^1 \nabla (u_1-u_2) \cdot \nabla_p \p^2_q Q(x,w+z(\tau),\nabla(w+z(\tau)),\Delta(w+z(\tau))) \dstu \\&\qquad + \int_0^1 \int_0^1  s t^2 \Delta u_1\,\Delta u_2\int_0^1 \Delta (u_1-u_2) \p^3_q Q(x,w+z(\tau),\nabla(w+z(\tau)),\Delta(w+z(\tau))) \dstu
			\\&\qquad+ \underbrace{\int_0^1 \int_0^1  t( \Delta u_2\, \nabla u_1- \Delta u_1\, \nabla u_2) \cdot\p_q\nabla_p Q(x,w+stu_1,\nabla(w+stu_1),\Delta(w+stu_1))\, \dst}_{-C}\\&\qquad+ \underbrace{\int_0^1 \int_0^1  t(u_1\Delta u_2- u_2\Delta u_1) \p_q\p_u Q(x,w+stu_1,\nabla(w+stu_1),\Delta(w+stu_1))\, \dst}_{-B}.
	\end{align*}
	Simplifying above we conclude the proof.

\end{proof}

\bibliography{bibliography}
\bibliographystyle{plain}

\end{document}